\numberwithin{equation}{section}
\theoremstyle{plain}
\newtheorem{thrm}{Theorem}[section]
\newtheorem{prop}{Proposition}[section]
\newtheorem{lemma}{Lemma}[section]
\newtheorem{crll}[thrm]{Corollary}
\theoremstyle{definition}
\newtheorem{defn}{Definition}[section]
\newtheorem{ass}{Assumptions}[section]
\theoremstyle{remark}
\newtheorem{rem}{Remark}[section]
\DeclareMathOperator{\cl}{cl}
\DeclareMathOperator{\Id}{Id}
\DeclareMathOperator{\supp}{supp}
\DeclareMathOperator{\Prb}{\mathbf{P}}
\DeclareMathOperator{\Mean}{\mathbf{E}}
\DeclareMathOperator{\Law}{Law}
\begin{document}


\newcommand{\trans}[1]{{#1}^\mathsf{T}}
\newcommand{\prbms}[2][]{\mathcal{P}_{#1}(#2)}
\newcommand{\Borel}[1]{\mathcal{B}(#1)}


\newcounter{hypcount}
\newenvironment{hypenv}{\renewcommand{\labelenumi}{(A\arabic{enumi})}\begin{enumerate}\setcounter{enumi}{\value{hypcount}}}{\setcounter{hypcount}{\value{enumi}}\end{enumerate}}

\newcounter{hypcount2}
\newenvironment{Hypenv}{\renewcommand{\labelenumi}{\textsc{(H\arabic{enumi})}}\begin{enumerate}\setcounter{enumi}{\value{hypcount2}}}{\setcounter{hypcount2}{\value{enumi}}\end{enumerate}}

\newenvironment{enumrm}{\begin{enumerate}\renewcommand{\labelenumi}{\textup{(\roman{enumi})}}}{\end{enumerate}}


\newcommand{\hypref}[1]{(ND)}
\newcommand{\Hypref}[1]{(H\ref{#1})}
\newcommand{\Hyprefall}{(H1)\,--\,(H\arabic{hypcount2})}

\renewcommand{\phi}{\varphi}
\renewcommand{\epsilon}{\varepsilon}


\selectlanguage{english}

\begin{frontmatter}

\title{$N$-player games and mean field games with absorption}
\runtitle{Mean field games with absorption}

\begin{aug}
  \author{\fnms{Luciano} \snm{Campi}\thanksref{t1}\ead[label=e1]{L.Campi@lse.ac.uk}}
  \and
  \author{\fnms{Markus} \snm{Fischer}\thanksref{t2}\ead[label=e2]{fischer@math.unipd.it}}

  \thankstext{t1}{Corresponding author.}

  \thankstext{t2}{M.F.\ was partially supported by the research projects ``Mean Field Games and Nonlinear \mbox{PDEs}'' (\mbox{CPDA157835}) of the University of Padua and ``Nonlinear Partial Differential Equations: Asymptotic Problems and Mean-Field Games'' of the Fondazione \mbox{CaRiPaRo}.}

  \runauthor{L.~Campi and M.~Fischer}

  \affiliation{London School of Economics and Political Science and University of Padua}

  \address{Department of Statistics\\
  London School of Economics and Political Science\\
  Houghton Street, London WC2A 2AE\\
  United Kingdom\\
  \printead{e1}}

  \address{Department of Mathematics\\
  University of Padua\\
  via Trieste 63, 35121 Padova\\
  Italy\\
  \printead{e2}}

\end{aug}

\begin{abstract}
We introduce a simple class of mean field games with absorbing boundary over a finite time horizon. In the corresponding $N$-player games, the evolution of players' states is described by a system of weakly interacting It{\^o} equations with absorption on first exit from a bounded open set. Once a player exits, her/his contribution is removed from the empirical measure of the system. Players thus interact through a renormalized empirical measure. In the definition of solution to the mean field game, the renormalization appears in form of a conditional law. We justify our definition of solution in the usual way, that is, by showing that a solution of the mean field game induces approximate Nash equilibria for the $N$-player games with approximation error tending to zero as $N$ tends to infinity. This convergence is established provided the diffusion coefficient is non-degenerate. The degenerate case is more delicate and gives rise to counter-examples.
\end{abstract}

\begin{keyword}[class=MSC]
\kwd[Primary ]{60K35}
\kwd{91A06}
\kwd[; secondary ]{60B10}
\kwd{93E20}
\end{keyword}

\begin{keyword}
\kwd{Mean field game}
\kwd{Nash equilibrium}
\kwd{McKean-Vlasov limit}
\kwd{absorbing boundary}
\kwd{weak convergence}
\kwd{martingale problem}
\kwd{optimal control}
\end{keyword}

\end{frontmatter}


\section{Introduction} \label{SectIntro}

Mean field games (MFGs, henceforth) were introduced by \citet{lasrylions06a,lasrylions06b,lasrylions07} and, independently, by \citet{huangetal06}, as limit models for symmetric nonzero-sum non-cooperative $N$-player games with interaction of mean field type as the number of players tends to infinity. The limit
relation is commonly understood in the sense that a solution of the MFG allows to construct approximate Nash equilibria for the corresponding $N$-player games if $N$ is sufficiently large; see, for instance, \citet{huangetal06}, \citet{kolokoltsovetal11}, \citet{carmonadelarue13}, and \citet{carmonalacker15}. This approximation result is useful from a practical point of view since the model of interest is commonly the $N$-player game with $N$ very large so that a direct computation of Nash equilibria is not feasible.

The purpose of this paper is to study $N$-player games and related \mbox{MFGs} in the presence of an absorbing set. Thus, a player is eliminated from the $N$-player game as soon as her/his private state process leaves a given open set $O \subset \mathbb R^d$, the set of non-absorbing states, which is the same for all players. We carry out our study for a simple class of continuous-time models with It{\^o}-type dynamics with mean field interaction over a bounded time horizon. More specifically, the vector of private states $\mathbf X^N = (X^N _1 ,\ldots,X_N ^N)$ in the $N$-player game is assumed to evolve according to
\begin{equation} \label{EqIntroDynamics}
\begin{split}
	X^{N}_{i}(t) = X^{N}_{i}(0) &+  \int_{0}^{t} \left( u_{i}(s,\boldsymbol{X}^{N}) + \bar{b}\left(t, X^{N}_{i}(t),\int_{\mathbb{R}^{d}} w(y) \pi^{N}(s,dy)\right)\right)ds 
	\\&+ \sigma W^{N}_{i}(t),\quad t\in [0,T],\; i\in \{1,\ldots,N\},
\end{split}
\end{equation}
where $\boldsymbol{u}= (u_{1},\ldots,u_{N})$ is a vector of feedback strategies with full state information (to be specified below), $W^{N}_{1},\ldots,W^{N}_{N}$ are independent $d$-dimensional Wiener processes defined on some filtered probability space, $\sigma$ is some dispersion matrix, which we assume to be constant for simplicity, and $\bar b$, $w$ are given deterministic functions. Moreover, $\pi^{N}(t,\cdot)$ is the (random) empirical measure of the players' states at time $t$ that have not left $O$, that is,   
\[
	\pi^{N} (t,\cdot)\doteq \begin{cases}
\frac{1}{\bar{N}^{N}(t)}\sum_{j=1}^{N} \mathbf{1}_{[0,\tau^{X^{N}_{j}})}(t)\cdot  \delta_{X^{N}_{j}(t)}(\cdot) &\text{if } \bar{N}^{N}(t) > 0,\\
	\delta_{0}(\cdot) &\text{if } \bar{N}^{N}(t) = 0,
	\end{cases}
\]
where $\bar{N}^{N} (t)\doteq \sum_{j=1}^{N}\mathbf{1}_{[0,\tau^{X^{N}_{j}})}(t)$ is the number of players still in the game at time $t$ and
\[
	\tau^{X^N _j}\doteq \inf\left\{ t \in [0,T]:  X^N _j(t)\notin O\right\},
\]
denotes the time of first exit of $X^N _j$ from $O$, with the convention $\inf \emptyset = \infty$. By definition, $\pi^{N} (t,\cdot)$ equals the Dirac measure in $0\in \mathbb{R}^{d}$ if all players have left $O$ by time $t$. The choice of $\delta_{0}$ in this case is arbitrary and has no impact on the game; this will be clear from the definition of the cost functionals $J^{N}_{i}$ below. Heuristically, when some player exits the set $O$, he/she does not contribute anymore to the empirical measure $\pi^N (t,\cdot)$, which is computed with respect to the ``survivors'' only. Notice that the controls appear linearly in (\ref{EqIntroDynamics}) only for the sake of simplicity. Even though more general dependencies could be considered, we do not aim at giving the minimal set of assumptions under which our results hold true.

Each player wants to minimize expected costs according to a given cost functional over the random time horizon determined as the minimum between the player's time of first exit from $O$ and the overall time horizon $T$. More precisely, player $i$ evaluates a strategy vector $\boldsymbol{u} = (u_{1},\ldots,u_{N})$ according to
\begin{multline*}
	J^{N}_{i}(\boldsymbol{u})\doteq \Mean\Biggl[ \int_{0}^{\tau^{N}_{i}} f\left(s,X^{N}_{i}(s),\int_{\mathbb{R}^{d}} w(y)\pi^{N}(s,dy),u_{i}\left(s,\boldsymbol{X}^{N}\right) \right)ds \\
	+ F\left(\tau^{N}_{i}, X^{N}_{i}(\tau^{N}_{i}) \right) \Biggr],
\end{multline*}
where $\boldsymbol{X}^{N}$ is the solution of Eq.~\eqref{EqIntroDynamics} under $\boldsymbol{u}$ and $\tau_i ^N \doteq \tau^{X^N _i} \wedge T$ is the (random) duration of the game for player $i$. Notice that the cost coefficients $f$, $F$ are the same for all players. As in the dynamics, we have mean field interaction in the costs through the renormalized empirical measures $\pi^N (t,\cdot)$. For simplicity, we only consider finite-dimensional dependencies on the measure variable, namely through integrals of the vector-valued function $w$. More details on the setting with all the technical assumptions will be given in the next sections.

The presence of an absorbing set can be motivated by economic models for credit risk and corporate finance. There, the players can be interpreted as interacting firms striving to maximize some objective functional, for instance the expected value of discounted future dividends, or as banks controlling their rate of borrowing/lending to a central bank as in the systemic risk model proposed by  \citet{carmona-et-al15}. Within both contexts, the absorbing boundary can be naturally seen as a default barrier as in structural models for credit risk. In this paper, we concentrate on the mathematical properties of this family of games, while we postpone their possible applications to future research.

For our class of games, we focus on the construction of approximate Nash equilibria for the $N$-player games through solutions of the corresponding \mbox{MFG}. Our main contributions are as follows:

\begin{itemize}

\item  We introduce the limit model corresponding to the above $N$-player games as $N\to\infty$, namely the \mbox{MFG} with absorption. For a solution of the \mbox{MFG}, the renormalized empirical measures $\pi^N (t,\cdot)$ are replaced by a flow of conditional laws; see Definition~\ref{DefMFGSolution}.

\item Under a non-degeneracy condition on the dispersion matrix $\sigma$, we prove that any regular feedback solution of the \mbox{MFG} induces a sequence of approximate Nash equilibria for the $N$-player games with approximation error tending to zero as $N\to\infty$; see Theorem~\ref{ThApproximateNash}. Here, ``regular'' means that the optimal feedback strategy is (almost surely) continuous with respect to the state variable.

\item Under the same non-degeneracy condition on $\sigma$, we prove existence of a solution in feedback form to the \mbox{MFG} with absorption; see Theorem~\ref{nash-existence}. Moreover, under some additional conditions, we show that the optimal feedback strategies are Markovian and continuous in the state variable; see Proposition~\ref{prop:cont} and Corollary~\ref{crll:nash-cont}. In that situation, we briefly sketch what would be the \mbox{PDE} approach to mean field games with absorption.

\item In the degenerate case, i.e.\ when $\sigma$ may vanish, we provide a counter-example where the solution of the limit \mbox{MFG} is not even nearly optimal (in the sense of inducing approximate Nash equilibria) for the $N$-player games. This is in contrast with what happens in the absence of an absorbing set.

\end{itemize}

The proof of Theorem~\ref{ThApproximateNash} on approximate Nash equilibria is based on weak convergence arguments, controlled martingale problems and a reformulation of the original dynamics and costs using path-dependent coefficients; see, in particular, \eqref{ExCoeffDrift} and \eqref{ExProofNashDrift}. This allows to work with solutions defined over the entire time interval $[0,T]$. The resulting description of the systems should be compared to the set-up used in \citet{carmonalacker15}. There, questions of existence and uniqueness of solutions finite horizon \mbox{MFGs} with non-degenerate noise and functional coefficients are studied through probabilistic methods, and approximate Nash equilibria are constructed from the \mbox{MFG}. Nonetheless, the results of \citet{carmonalacker15} cannot be applied directly to our situation, due in part to different continuity assumptions. What is more, approximate Nash equilibria are constructed there only for dynamics without mean field interaction \citep[][Theorem~4.2]{carmonalacker15}; this assumption implies an independence property not warranted in more general situations. The use of martingale problems in proving convergence to the McKean-Vlasov limit and propagation of chaos for weakly interacting systems has a long tradition; see, for instance, \citet{funaki84}, \citet{oelschlaeger84}, or \citet{meleard96}. In those works, the $N$-particle systems are usually assumed to be fully symmetric, and the dynamics of all particles are determined by the same coefficients. Here, we have to study the passage to the many player (particle) limit also in the presence of a deviating player, which destroys the symmetry of the prelimit systems. The absorbing boundary introduces a discontinuity into the dynamics so that a single deviating player might have a non-negligible effect on the limit behavior of the $N$-player empirical measures. This is the reason why we give a detailed proof of convergence in Appendix~\ref{AppConvergence}.

The proof of Theorem~\ref{nash-existence} on existence of a feedback solution for the limit \mbox{MFG} is based on results on \mbox{BSDEs} with random terminal horizon as in \citet{darlingpardoux97, briandhu98} and the use of the Brouwer-Schauder-Tychonov fixed point theorem applied to a suitable map in the same spirit as \citet{carmonalacker15}. For the continuity of the optimal feedback strategy, we rely on the classical \mbox{PDE} approach to optimal control and adapt to our setting the proof of a regularity result due to \citet{flemingrishel75}.

For the counter-example (given in Section~7), we consider systems with dispersion coefficient $\sigma$ equal to zero; the only source of randomness comes from the initial conditions. We construct, for a specific choice of the initial distribution, a feedback solution of the \mbox{MFG} that is Lipschitz continuous in the state variable and such that the induced strategy vectors are not asymptotically optimal for the corresponding $N$-player games. The reason for this non-convergence is a change in the controllability of the individual player dynamics between limit and prelimit systems in conjunction with the discontinuity of the costs introduced by the absorbing boundary. Also notice that the initial distribution we choose is singular with respect to Lebesgue measure. The counter-example thus holds little surprise from the point of view of optimal control theory. In the context of \mbox{MFGs} without absorption, on the other hand, the connection between solutions of the limit \mbox{MFG} and approximate Nash equilibria for the $N$-player games is known to be robust and persists even for systems with fully degenerate noise; see Theorem~2.11 in \citet{lacker16} for a general result in this direction. From this point of view, the counter-example seems to be interesting.

\paragraph{Related literature} Mean-field models similar to ours have been studied before in different contexts. In a first group of papers, such as \citet{giesecke-et-al13}, \citet{spiliopoulos-et-al14}, \citet{giesecke-et-al15}, a point process model of correlated defaults timing in a portfolio of firms has been introduced. More specifically, a firm defaults with some intensity which follows a mean-reverting jump-diffusion process that is driven by several terms, one of them being the default rate in the pool. This naturally induces a mean-field among the default intensities: every time one firm defaults a jump occurs in the default rate of the pool and hence in the intensities of the survivors. The effects of defaults fade away with time. A law of large number (\mbox{LLN}) for the default rate as the number $N$ of firms goes to infinity is proved in \citet{giesecke-et-al13}, while other results on the loss from default are analyzed in the companion papers \citet{spiliopoulos-et-al14, giesecke-et-al15}. Similar results have been obtained for the interacting particle models proposed by \citet{cvitanic-et-al12} and by \citet{dai-pra-et-al09}, where the effect of defaults on the survivors is permanent.

Apart from the fact that their setting is not controlled, the main difference between our model and theirs is that whenever, in our model, some diffusion is absorbed and the conditional empirical measure is updated accordingly, the diffusions still in $O$ are affected by it in a continuous way since the empirical measure appears only in the drift coefficient. Moreover, while in the setting of Giesecke, Spiliopoulos and coauthors the intensities can have a common noise, this is not included in our model for the sake of simplicity.

More recently, \citet{hamblyledger16}, motivated by various applications from large credit portfolios to neuroscience, have proposed a system of $N$ uncontrolled diffusions that are killed as soon as they go negative. Furthermore, their coefficients are functions of the current proportional loss, which is defined as the proportion out of $N$ of killed diffusion. They prove a \mbox{LLN} for the empirical measure of the population using some energy estimates in combination with weak convergence techniques.

Two more papers, which are related to ours, are those by \citet{delarue-et-al15a, delarue-et-al15b}. Motivated by applications in neuroscience, these authors study the well-posedness of an integrate-and-fire model and its approximation via particle systems. Mathematically speaking, they look at a nonlinear scalar \mbox{SDE} with a jump component responsible for resetting trajectories as soon as they hit a given threshold (occurrence of a ``spike'') and a singular drift term of mean field type (the ``nonlinear'' interaction) depending on the average number of spikes before current time. When the nonlinear interaction is too strong, any solution can have a blow-up in finite time. In our model, we could have a similar cascade effect, which would correspond to a situation when all players get absorbed before the finite horizon $T$. Investigating such a phenomenon in our setting is interesting on its own but goes beyond the scope of this paper.

In the applied literature, \mbox{MFGs} with absorption at zero have been recently considered in \cite{chansircar15a,chansircar15b} within the context of oligopolistic models with exhaustible resources. A common feature that their model shares with ours is that they also keep track of the fraction of active players remaining at time $t$, which appears in the objective functions (through the control) but not in the state variable. Moreover, they look at some particular cases, which are relevant from an economic perspective, and perform some asymptotic expansions corresponding to the case of ``small competition''. A rigorous study addressing existence and uniqueness issues in Chan and Sircar's model has been subsequently done by \cite{graberbensoussan16}.

The last work we mention here is by \citet{bensoussanfrehsegruen14} and bears a different relation to ours. There, the authors construct Nash equilibria using \mbox{PDE} methods for a class of stochastic differential games with a varying number of players (the dynamics are not of mean field type). While the maximum number of players is prescribed, players may leave the game (be pushed out or ``die''), and new players may join. In our case, we only allow players to leave (through absorption). A natural extension of our model would include a mechanism by which players enter the game.

\paragraph{Structure of the paper} The rest of this paper is organized as follows. Section~\ref{SectNotation} introduces some terminology and notation and sets the main assumptions on the dynamics as well as on the cost functionals. Section~\ref{SectPrelimitSystems} describes the setting of $N$-player games with absorption, while Section~\ref{SectLimitSystems} introduces the corresponding \mbox{MFG}. In Section~\ref{SectApproxNash}, one of the main results, namely the construction of approximate Nash equilibria for the $N$-player game from a solution of the limit problem, is stated and proved. Section~\ref{SectExistence} contains the results on the existence of (regular) feedback solutions for the \mbox{MFG}, in particular those with Markov feedback controls, as well as a sketch of the \mbox{PDE} approach to mean field games with absorption. In Section~\ref{SectCounterExmpl}, we provide the aforementioned counter-example in the case of degenerate noise. The technical results used in the paper are all gathered in the Appendix, including the aforementioned propagation-of-chaos-type results in Appendix~\ref{AppConvergence} and a uniqueness result for McKean-Vlasov equations in Appendix~\ref{AppUniqueness}.


\section{Preliminaries and assumptions} \label{SectNotation}

Let $d\in \mathbb{N}$, which will be the dimension of the space of private states, noise values, as well as control actions. The spaces $\mathbb{R}^{n}$ with $n\in \mathbb{N}$ are equipped with the standard Euclidean norm, always indicated by $|.|$. Choose $T > 0$, the finite time horizon.

For $\mathcal{S}$ a Polish space, let $\prbms{\mathcal{S}}$ denote the space of probability measures on $\Borel{\mathcal{S}}$, the Borel sets of $\mathcal{S}$. For $s\in \mathcal{S}$, let $\delta_{s}$ indicate the Dirac measure concentrated in $s$. Equip $\prbms{\mathcal{S}}$ with the topology of weak convergence of probability measures. Then $\prbms{\mathcal{S}}$ is again a Polish space.

Set $\mathcal{X}\doteq \mathbf{C}([0,T],\mathbb{R}^{d})$, which can be seen as the space of individual state trajectories. As usual, equip $\mathcal{X}$ with the topology of uniform convergence, which turns it into a Polish space. Let $\|\cdot\|_{\mathcal{X}}$ denote the supremum norm on $\mathcal{X}$. Denote by $\hat{X}$ the coordinate process on $\mathcal{X}$:
\begin{align*}
	& \hat{X}(t,\phi)\doteq \phi (t),& & t\in [0,T],& &\phi\in \mathcal{X}.&
\end{align*}
Let $(\mathcal{G}_{t})$ be the canonical filtration in $\mathcal{B}(\mathcal{X})$, that is,
\[
	\mathcal{G}_{t}\doteq \sigma \left( \hat{X}(s) : 0\leq s\leq t\right),\quad t\in [0,T].
\]

Given $N\in \mathbb{N}$, we will use the usual identification of $\mathcal{X}^{N} = \times^{N}\mathcal{X}$ with the space $\mathbf{C}([0,T],\mathbb{R}^{N\cdot d})$; $\mathcal{X}^{N}$, too, will be equipped with the topology of uniform convergence. We call a function $u$ defined on $[0,T]\times \mathcal{X}^{N}$ with values in some measurable space \emph{progressively measurable} if $u$ is measurable and, for every $t\in [0,T]$, all $\boldsymbol{\phi}, \boldsymbol{\tilde{\phi}} \in \mathcal{X}^{N}$,
\[
	u(t,\boldsymbol{\phi}) = u(t,\boldsymbol{\tilde{\phi}}) \text{ whenever } \boldsymbol{\phi}(s) = \boldsymbol{\tilde{\phi}}(s) \text{ for all } s\in [0,t].
\]

Let $\Gamma$ be a closed subset of $\mathbb{R}^{d}$, the set of control actions, or action space.

Let $O \subset \mathbb{R}^{d}$ be an open set, the set of non-absorbing states. For $Y$ an $\mathbb{R}^{d}$-valued process defined on some probability space $(\Omega,\mathcal{F},\Prb)$ over the time interval $[0,T]$, let
\[
	\tau^{Y}(\omega)\doteq \inf\left\{t \in [0,T]:  Y(t,\omega)\notin O\right\},\quad \omega\in \Omega,
\]
denote the random time of first exit of $Y$ from $O$, with the convention $\inf\emptyset = \infty$. Clearly, if $Y$ has continuous trajectories and is adapted to some filtration $(\mathcal{F}_{t})$ in $\mathcal{F}$, then $\tau^{Y}$ is an $(\mathcal{F}_{t})$-stopping time. In this case, $\tau^{Y} = \inf\left\{t \in [0,T]:  Y(t)\in \partial O\right\}$, where $\partial O$ denotes the boundary of $O$.

Let $d_{0}\in \mathbb{N}$, and let $w\!: \mathbb{R}^{d} \rightarrow \mathbb{R}^{d_{0}}$,
\begin{align*}
	& \bar{b}\!: [0,T]\times \mathbb{R}^{d}\times \mathbb{R}^{d_{0}} \rightarrow \mathbb{R}^{d},& & \sigma \in \mathbb{R}^{d\times d},& \\
	& f\!: [0,T]\times \mathbb{R}^{d}\times \mathbb{R}^{d_{0}}\times \Gamma \rightarrow [0,\infty),& & F\!: [0,T]\times \mathbb{R}^{d} \rightarrow [0,\infty). &
\end{align*}
The function $w$ will denote an integrand for the measure variable in the drift and the running costs, respectively, $\bar{b}$ a function of the drift integral, $\sigma$ the dispersion coefficient of the dynamics, while $f$, $F$ quantify the running and terminal costs, respectively. Notice that the dispersion coefficient $\sigma$ is a constant matrix and that the cost coefficients $f$, $F$ are non-negative functions. Let us make the following assumptions:
\begin{Hypenv}

	\item \label{HypMeasBounded} Boundedness and measurability: $w$, $\bar{b}$, $f$, $F$ are Borel measurable functions uniformly bounded by some constant $K > 0$. 
	 
	\item \label{HypCont} Continuity: $w$, $f$, $F$ are continuous, and $\bar{b}(t,\cdot,\cdot)$ is continuous, uniformly in $t\in [0,T]$.
		
	\item \label{HypLipschitz} Lipschitz continuity of $\bar{b}$: there exists $\bar{L} > 0$ such that for all $x,\tilde{x}\in \mathbb{R}^{d}$, $y,\tilde{y}\in \mathbb{R}^{d_{0}}$,
	\[
		\sup_{t\in[0,T]} \left| \bar{b}(t,x,y) - \bar{b}(t,\tilde{x},\tilde{y}) \right| \leq \bar{L}\left( \left|x - \tilde{x}\right| + \left|y - \tilde{y}\right| \right).
	\]

	\item \label{HypActionSpace} Action space: $\Gamma \subset \mathbb{R}^{d}$ is compact and convex (and non-empty).
	
	\item \label{HypStateSpace} State space: $O \subset \mathbb{R}^{d}$ is non-empty, open, and bounded such that $\partial O$ is a $C^{2}$-manifold.

\end{Hypenv}

The results of Sections \ref{SectApproxNash} and \ref{SectExistence} will be established under the following additional assumption:
\begin{hypenv}
	\item[(ND)] \label{HypAddND} Non-degeneracy: $\sigma$ is a matrix of full rank.
\end{hypenv}


\section{$N$-player games} \label{SectPrelimitSystems}

Let $N\in \mathbb{N}$ be the number of players. Denote by $X^{N}_{i}(t)$ the private state of player $i$ at time $t\in [0,T]$. The evolution of the players' states depends on the strategies they choose as well as the initial distribution of states, which we indicate by $\nu_{N}$ (thus, $\nu_{N}\in \prbms{\mathbb{R}^{N\times d}}$). We assume that $\supp(\nu_{N}) \subset \times^{N} O$ and that $\nu_{N}$ is symmetric in the sense that
\[
	\nu^{N}\circ \mathfrak{s}^{-1} = \nu^{N}
\]
for all maps $\mathfrak{s}\!: (\mathbb{R}^{d})^{N} \rightarrow (\mathbb{R}^{d})^{N}$ of the form $(x_{1},\ldots,x_{N})\mapsto (x_{p(1)},\ldots,x_{p(N)})$ for some permutation $p$ of $(1,\ldots,N)$.

Here, we consider players using feedback strategies with full state information (up to the current time). Thus, let $\mathcal{U}_{N}$ denote the set of all progressively measurable functions $u\!: [0,T]\times \mathcal{X}^{N} \rightarrow \Gamma$. Elements of $\mathcal{U}_{N}$ represent individual strategies. A vector $(u_{1},\ldots,u_{N})$ of individual strategies is called a strategy vector or strategy profile. Given a strategy vector $\boldsymbol{u}= (u_{1},\ldots,u_{N})\in \times^{N}\mathcal{U}_{N}$, consider the system of equations 
\begin{equation} \label{EqPrelimitDynamics}
\begin{split}
	X^{N}_{i}(t) = X^{N}_{i}(0) &+  \int_{0}^{t} \left( u_{i}(s,\boldsymbol{X}^{N}) + \bar{b}\left(s, X^{N}_{i}(s), \int_{\mathbb{R}^{d}} w(y) \pi^{N}(s,dy)\right)\right)ds 
	\\&+ \sigma W^{N}_{i}(t),\quad t\in [0,T],\; i\in \{1,\ldots,N\},
\end{split}
\end{equation}
where $\boldsymbol{X}^{N} = (X^{N}_{1},\ldots,X^{N}_{N})$, $W^{N}_{1},\ldots,W^{N}_{N}$ are independent $d$-dimensional Wiener processes defined on some filtered probability space $(\Omega,\mathcal{F},(\mathcal{F}_{t}),\Prb)$ and $\pi^{N}(t,\cdot)$ is the empirical measure of the players' states at time $t$ that have not left $O$, that is,   
\[
	\pi^{N}_{\omega}(t,\cdot)\doteq \begin{cases}
\frac{1}{\bar{N}^{N}_{\omega}(t)}\sum_{j=1}^{N} \mathbf{1}_{[0,\tau^{X^{N}_{j}}(\omega))}(t)\cdot  \delta_{X^{N}_{j}(t,\omega)}(\cdot) &\text{if } \bar{N}^{N}_{\omega}(t) > 0,\\
	\delta_{0}(\cdot) &\text{if } \bar{N}^{N}_{\omega}(t) = 0,
	\end{cases}
\]
where
\[
	\bar{N}^{N}_{\omega}(t)\doteq \sum_{j=1}^{N}\mathbf{1}_{[0,\tau^{X^{N}_{j}}(\omega))}(t),\quad \omega\in \Omega.
\]

It will be convenient to rewrite \eqref{EqPrelimitDynamics} as a system of particles interacting through their unconditional empirical measure on the path space. To this end, set $\tau\doteq \tau^{\hat{X}}$, that is,
\[
	\tau(\phi) \doteq \inf\left\{ t \in [0,T] : \phi(t)\notin O\right\},\quad \phi\in \mathcal{X},
\]
and define $b\!: [0,T]\times \mathcal{X} \times \prbms{\mathcal{X}}\times \Gamma \rightarrow \mathbb{R}^{d}$ by
\begin{equation} \label{ExCoeffDrift}
	b(t,\phi,\theta,\gamma) \doteq \begin{cases}
	\gamma + \bar{b}\left(t, \phi(t), \frac{\int w(\tilde{\phi}(t)) \mathbf{1}_{[0,\tau(\tilde{\phi}))}(t) \theta(d\tilde{\phi})}{\int \mathbf{1}_{[0,\tau(\tilde{\phi}))}(t) \theta(d\tilde{\phi})} \right)
	&\text{if } \theta(\tau > t) > 0,\\
	\gamma + \bar{b}\left(t,\phi(t),w(0)\right) &\text{if } \theta(\tau > t) = 0.
	\end{cases}
\end{equation}
Then $b$ is measurable and progressive in the sense that, for all $t\in [0,T]$, all $\gamma\in \Gamma$,
\[
	b(t,\phi,\theta,\gamma) = b(t,\tilde{\phi},\tilde{\theta},\gamma) \text{ whenever } \phi_{|[0,t]} = \tilde{\phi}_{|[0,t]} \text{ and } \theta_{|\mathcal{G}_{t}} = \tilde{\theta}_{|\mathcal{G}_{t}}.
\]
The solutions of \eqref{EqPrelimitDynamics} are thus equivalently described by:
\begin{equation} \label{EqPrelimitDynamics2}
\begin{split}
	X^{N}_{i}(t) &= X^{N}_{i}(0) +  \int_{0}^{t} b\left(s,X^{N}_{i},\mu^{N}, u_{i}(s,\boldsymbol{X}^{N})\right)ds + \sigma W^{N}_{i}(t), \\
	&t\in [0,T],\; i\in \{1,\ldots,N\},
\end{split}
\end{equation}
where $W^{N}_{1},\ldots,W^{N}_{N}$ are independent $d$-dimensional Wiener processes defined on some filtered probability space $(\Omega,\mathcal{F},(\mathcal{F}_{t}),\Prb)$, and $\mu^{N}$ is the empirical measure of the players' state trajectories, that is,   
\[
	\mu^{N}_{\omega}(B)\doteq  \frac{1}{N} \sum_{j=1}^{N}  \delta_{X^{N}_{j}(\cdot,\omega)}(B),\quad B\in \Borel{\mathcal{X}},\; \omega\in \Omega.
\]

A solution of Eq.~\eqref{EqPrelimitDynamics} (equivalently, of Eq.~\eqref{EqPrelimitDynamics2}) under $\boldsymbol{u}\in \times^{N}\mathcal{U}_{N}$ with initial distribution $\nu_{N}$ is therefore a triple $((\Omega,\mathcal{F},(\mathcal{F}_{t}),\Prb),\boldsymbol{W}^{N},\boldsymbol{X}^{N})$ where $(\Omega,\mathcal{F},(\mathcal{F}_{t}),\Prb)$ is a filtered probability space satisfying the usual hypotheses, $\boldsymbol{W}^{N} = (W^{N}_{1},\ldots,W^{N}_{N})$ a vector of independent $d$-dimensional $(\mathcal{F}_{t})$-Wiener processes, and $\boldsymbol{X}^{N} = (X^{N}_{1},\ldots,X^{N}_{N})$ a vector of continuous $\mathbb{R}^{d}$-valued $(\mathcal{F}_{t})$-adapted processes such that Eq.~\eqref{EqPrelimitDynamics} (resp.\ Eq.~\eqref{EqPrelimitDynamics2}) holds $\Prb$-almost surely with strategy vector $\boldsymbol{u}$ and $\Prb\circ(\boldsymbol{X}^{N}(0))^{-1} = \nu_{N}$.

Let $\mathcal{U}^{N}_{fb}$ be the set of all strategy vectors $\boldsymbol{u}\in \times^{N}\mathcal{U}_{N}$ such that Eq.~\eqref{EqPrelimitDynamics} under $\boldsymbol{u}$ with initial distribution $\nu_{N}$ possesses a solution that is unique in law. If the non-degeneracy assumption \hypref{HypAddND} holds in addition to \Hyprefall, then Eq.~\eqref{EqPrelimitDynamics} is well posed given any strategy vector:

\begin{prop} \label{PropPrelimitExistUnique}
	Grant \hypref{HypAddND} in addition to \Hyprefall. Then $\mathcal{U}^{N}_{fb} = \times^{N}\mathcal{U}_{N}$.
\end{prop}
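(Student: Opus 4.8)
The plan is to reduce the claim to a classical weak existence and uniqueness-in-law result for It\^o equations with bounded measurable drift and constant, invertible diffusion coefficient, obtained through the Girsanov transformation. Under \hypref{HypAddND} the square matrix $\sigma\in\mathbb{R}^{d\times d}$ has full rank and is therefore invertible. Collecting the $N$ equations \eqref{EqPrelimitDynamics2} into a single $N\cdot d$-dimensional system, the dispersion coefficient is the block-diagonal matrix $\diag(\sigma,\ldots,\sigma)$, again invertible, while the aggregated drift $\boldsymbol{\phi}\mapsto\bigl(b(t,\phi_{i},\mu^{N},u_{i}(t,\boldsymbol{\phi}))\bigr)_{i=1}^{N}$ is \emph{bounded} --- by \Hyprefall\ (boundedness of $\bar{b}$) together with the compactness of $\Gamma$ in \hypref{HypActionSpace} --- and \emph{progressively measurable}, since the $u_{i}$ are progressive by assumption, the empirical path measure $\mu^{N}$ is an adapted functional of $\boldsymbol{X}^{N}$, and $b$ itself is progressive as noted after \eqref{ExCoeffDrift}.

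For weak existence I would start from a reference space $(\Omega,\mathcal{F},(\mathcal{F}_{t}),\tilde{\Prb})$ carrying $N$ independent $d$-dimensional Wiener processes $\tilde{\boldsymbol{W}}^{N}$ and an independent initial vector with law $\nu_{N}$, and set $X^{N}_{i}(t)\doteq X^{N}_{i}(0)+\sigma\tilde{W}^{N}_{i}(t)$. Because the drift is bounded, the exponential
\[
	Z_{t}\doteq\exp\Biggl(\sum_{i=1}^{N}\int_{0}^{t}\bigl\langle\sigma^{-1}b\bigl(s,X^{N}_{i},\mu^{N},u_{i}(s,\boldsymbol{X}^{N})\bigr),d\tilde{W}^{N}_{i}(s)\bigr\rangle-\tfrac{1}{2}\sum_{i=1}^{N}\int_{0}^{t}\bigl|\sigma^{-1}b\bigr|^{2}\,ds\Biggr)
\]
satisfies Novikov's condition trivially and is hence a true $\tilde{\Prb}$-martingale. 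Defining $\Prb$ by $d\Prb/d\tilde{\Prb}|_{\mathcal{F}_{T}}=Z_{T}$ and applying Girsanov's theorem, the processes $W^{N}_{i}(t)\doteq\tilde{W}^{N}_{i}(t)-\int_{0}^{t}\sigma^{-1}b(s,\ldots)\,ds$ become independent $\Prb$-Wiener processes; substituting back shows that $\boldsymbol{X}^{N}$ solves \eqref{EqPrelimitDynamics2} under $\Prb$ with initial law $\nu_{N}$.

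For uniqueness in law I would argue in reverse: given any solution on some space, remove the drift by the change of measure with density proportional to the reciprocal of the corresponding exponential. Under the new measure $\boldsymbol{X}^{N}$ is a driftless scaled Brownian motion started from $\nu_{N}$, whose law on $\mathcal{X}^{N}$ is fixed. The crucial point is that, rewriting the stochastic integrals via $\sigma\,dW^{N}_{i}=dX^{N}_{i}-b\,ds$, this density is expressed as a measurable functional $\Phi$ of the path $\boldsymbol{X}^{N}$ alone. Consequently the law of $\boldsymbol{X}^{N}$ under the original measure equals $\Phi$ times the fixed Brownian law and is uniquely determined. Since $\boldsymbol{u}\in\times^{N}\mathcal{U}_{N}$ was arbitrary, this yields $\mathcal{U}^{N}_{fb}=\times^{N}\mathcal{U}_{N}$.

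The point requiring care --- rather than a genuine obstacle --- is that the drift is \emph{discontinuous}: the absorbing mechanism enters $b$ through the indicators $\mathbf{1}_{[0,\tau(\tilde{\phi}))}$, so $b$ jumps as particles leave $O$, and this is precisely what makes the degenerate counter-example of Section~\ref{SectCounterExmpl} possible. Under \hypref{HypAddND}, however, the Girsanov scheme requires only that the drift be bounded, adapted and measurable; its discontinuity in the state variable is immaterial, and no Lipschitz or continuity regularity of $b$ is needed. The verification therefore rests entirely on confirming boundedness (from \Hyprefall\ and compact $\Gamma$), progressive measurability (from the structure of $b$ and the $u_{i}$), and invertibility of $\sigma$ (from \hypref{HypAddND}).
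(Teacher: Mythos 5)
Your proposal is correct and follows essentially the same route as the paper, which rewrites \eqref{EqPrelimitDynamics2} as a single $N\cdot d$-dimensional equation with bounded progressive drift $\boldsymbol{b}_{\boldsymbol{u}}$ and non-degenerate constant diffusion and then invokes Girsanov's theorem together with the Stroock--Varadhan martingale problem (citing V.27 in Rogers and Williams). The only difference is that you spell out the forward and inverse Girsanov transformations that the paper delegates to that reference, and your observation that discontinuity of the drift is immaterial here is exactly the point.
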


\begin{proof}
Let $\boldsymbol{u}\in \times^{N}\mathcal{U}_{N}$. The system of equations \eqref{EqPrelimitDynamics2} can be rewritten as one stochastic differential equation with state space $\mathbb{R}^{N\times d}$ driven by an $N\times d$-dimensional standard Wiener process with drift coefficient $\boldsymbol{b}_{\boldsymbol{u}}\!: [0,T]\times \mathcal{X}^{N} \rightarrow \mathbb{R}^{N\times d}$ given by
\[
\begin{split}
	&\boldsymbol{b}_{\boldsymbol{u}}(t,\boldsymbol{\phi}) \\
	&\doteq \trans{\left( b\left(t,\phi_{1},\frac{1}{N}\sum_{j=1}^{N}\delta_{\phi_{j}},u_{1}(s,\boldsymbol{\phi})\right),\ldots, b\left(t,\phi_{N},\frac{1}{N}\sum_{j=1}^{N}\delta_{\phi_{j}},u_{N}(s,\boldsymbol{\phi})\right) \right)}
	\end{split}
\]
and non-degenerate constant diffusion coefficient. Notice that $\boldsymbol{b}_{\boldsymbol{u}}$ is bounded and progressive with respect to the natural filtration in $\Borel{\mathcal{X}^{N}}$. Existence of a weak solution and uniqueness in law are now a consequence of Girsanov's theorem and the Stroock-Varadhan martingale problem;  cf.\ V.27 in \citet[pp.\,177-178]{rogerswilliams00b}.
\end{proof}

The $i$-th player evaluates a strategy vector $\boldsymbol{u} = (u_{1},\ldots,u_{N})\in \mathcal{U}^{N}_{fb}$ according to the cost functional
\begin{multline*}
	J^{N}_{i}(\boldsymbol{u})\doteq \Mean\Biggl[ \int_{0}^{\tau^{N}_{i}} f\left(s,X^{N}_{i}(s),\int_{\mathbb{R}^{d}} w(y)\pi^{N}(s,dy),u_{i}\left(s,\boldsymbol{X}^{N}\right) \right)ds \\
	+ F\left(\tau^{N}_{i}, X^{N}_{i}(\tau^{N}_{i}) \right) \Biggr],
\end{multline*}
where $\boldsymbol{X}^{N} = (X^{N}_{1},\ldots,X^{N}_{N})$ and $((\Omega,\mathcal{F},(\mathcal{F}_{t}),\Prb),\boldsymbol{W}^{N},\boldsymbol{X}^{N})$ is a solution of Eq.~\eqref{EqPrelimitDynamics} under $\boldsymbol{u}$ with initial distribution $\nu_{N}$,
\[
	\tau^{N}_{i}(\omega) \doteq \tau^{X^{N}_{i}}(\omega) \wedge T,\quad \omega\in \Omega,
\]
the random time horizon for player $i\in \{1,\ldots,N\}$, and $\pi^{N}(\cdot)$ the conditional empirical measure process induced by $(X^{N}_{1},\ldots,X^{N}_{N})$. The cost functional is well defined, and it is finite thanks to assumption \Hypref{HypMeasBounded}.

Given a strategy vector $\boldsymbol{u} = (u_{1},\ldots,u_{N})$ and an individual strategy $v\in \mathcal{U}_{N}$, let $[\boldsymbol{u}^{-i},v]\doteq (u_{1},\ldots,u_{i-1},v,u_{i+1},\ldots,u_{N})$ indicate the strategy vector that is obtained from $\boldsymbol{u}$ by replacing $u_{i}$, the strategy of player $i$, with $v$.

\begin{defn} \label{DefNash}
Let $\epsilon \geq 0$. A strategy vector $\boldsymbol{u} = (u_{1},\ldots,u_{N})\in \mathcal{U}^{N}_{fb}$ is called an \emph{$\epsilon$-Nash equilibrium} for the $N$-player game if for every $i\in \{1,\ldots,N\}$, every $v\in \mathcal{U}_{N}$ such that $[\boldsymbol{u}^{-i},v]\in \mathcal{U}^{N}_{fb}$,
\begin{equation} \label{EqDefNash}
	J^{N}_{i}(\boldsymbol{u}) \leq J^{N}_{i}\left([\boldsymbol{u}^{-i},v]\right) + \epsilon.
\end{equation}

If $\boldsymbol{u}$ is an $\epsilon$-Nash equilibrium with $\epsilon = 0$, then $\boldsymbol{u}$ is called a \emph{Nash equilibrium}.
\end{defn}

According to Definition~\ref{DefNash}, we consider the Nash equilibrium property with respect to feedback strategies with full state information (i.e., the states of the vector of individual processes up to current time). 


\section{Mean field games} \label{SectLimitSystems}

Let $\mathcal{M}$ denote the space of measurable flows of measures, that is,
\[
	\mathcal{M}\doteq \left\{ \mathfrak{p}\!: [0,T]\rightarrow \prbms{\mathbb{R}^{d}} : \mathfrak{p} \text{ is Borel measurable} \right\}.
\]
Given a flow of measures $\mathfrak{p}\in \mathcal{M}$ and a feedback strategy $u\in \mathcal{U}_{1}$, consider the equation
\begin{equation} \label{EqLimitDynamics}
\begin{split}
	X(t) &= X(0) +  \int_{0}^{t} \left( u(s,X) + \bar{b}\left(s,X(s), \int_{\mathbb{R}^{d}} w(y) \mathfrak{p}(s,dy)\right) \right)ds \\
	&\quad + \sigma W(t),\quad t\in [0,T],
\end{split}
\end{equation}
where $W$ is a $d$-dimensional Wiener process defined on some filtered probability space $(\Omega,\mathcal{F},(\mathcal{F}_{t}),\Prb)$.

Let $\mathcal{U}_{fb}$ denote the set of all feedback strategies $u\in \mathcal{U}_{1}$ such that Eq.~\eqref{EqLimitDynamics} possesses a solution that is unique in law given any initial distribution with support contained in $O$.

\begin{prop} \label{PropLimitExistUnique}
	Grant \hypref{HypAddND} in addition to \Hyprefall. Then $\mathcal{U}_{fb} = \mathcal{U}_{1}$.
\end{prop}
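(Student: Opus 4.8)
The plan is to specialize the proof of Proposition~\ref{PropPrelimitExistUnique} to the single It{\^o} equation \eqref{EqLimitDynamics}. Fix a feedback strategy $u\in \mathcal{U}_{1}$, a flow $\mathfrak{p}\in \mathcal{M}$, and an arbitrary initial distribution $\nu\in \prbms{\mathbb{R}^{d}}$ with $\supp(\nu)\subset O$. It suffices to show that \eqref{EqLimitDynamics} admits a weak solution that is unique in law and satisfies $\Prb\circ (X(0))^{-1} = \nu$; since $u$, $\mathfrak{p}$, and $\nu$ are arbitrary, this gives $u\in \mathcal{U}_{fb}$ and hence $\mathcal{U}_{fb} = \mathcal{U}_{1}$.

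First I would regard \eqref{EqLimitDynamics} as a single It{\^o} equation on $\mathbb{R}^{d}$ with constant diffusion coefficient $\sigma$ and drift coefficient $b_{u}\!: [0,T]\times \mathcal{X}\rightarrow \mathbb{R}^{d}$,
\[
	b_{u}(t,\phi) \doteq u(t,\phi) + \bar{b}\left(t,\phi(t), \int_{\mathbb{R}^{d}} w(y)\, \mathfrak{p}(t,dy)\right).
\]
The two properties that drive everything are boundedness and progressive measurability of $b_{u}$. Boundedness is immediate: $u$ takes values in the compact action space $\Gamma$ (assumption~\Hypref{HypActionSpace}) and $\bar{b}$ is bounded by $K$ (assumption~\Hypref{HypMeasBounded}). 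For progressive measurability, $u$ is progressive by definition of $\mathcal{U}_{1}$, while the second term depends on $\phi$ only through the current value $\phi(t)$; it therefore suffices to verify that $t\mapsto \int_{\mathbb{R}^{d}} w(y)\, \mathfrak{p}(t,dy)$ is Borel measurable. This holds because $\mathfrak{p}$ is a Borel measurable flow of measures and $w$ is bounded continuous (assumptions~\Hypref{HypMeasBounded} and~\Hypref{HypCont}), so that $\mu\mapsto \int w\, d\mu$ is continuous on $\prbms{\mathbb{R}^{d}}$ and the composition is measurable. Consequently $b_{u}$ is bounded and progressive with respect to $(\mathcal{G}_{t})$.

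With $b_{u}$ bounded and progressive and $\sigma$ of full rank by~\hypref{HypAddND}, existence of a weak solution and uniqueness in law follow from Girsanov's theorem and the Stroock-Varadhan martingale problem, exactly as in Proposition~\ref{PropPrelimitExistUnique} (cf.\ V.27 in \citet[pp.\,177-178]{rogerswilliams00b}). Concretely, I would start from a reference probability under which the coordinate process is, up to the constant invertible factor $\sigma$, a Brownian motion with initial law $\nu$, and then introduce the drift through an exponential change of measure; boundedness of $b_{u}$ guarantees that the corresponding density is a genuine martingale, and uniqueness in law is read off from well-posedness of the corresponding martingale problem, whose second-order coefficient $\tfrac{1}{2}\sigma\trans{\sigma}$ is constant and non-degenerate and whose drift $b_{u}$ is bounded and progressive.

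I do not expect a genuine obstacle here. Passing from the coupled system of Proposition~\ref{PropPrelimitExistUnique} to a single equation removes the need to assemble a vector drift on $\mathcal{X}^{N}$, and the external flow $\mathfrak{p}$ enters the drift only as a fixed bounded measurable function of time rather than as a random empirical measure. The sole point requiring attention is the measurability of the flow-dependent part of $b_{u}$, for which the measurability built into the definition of $\mathcal{M}$ is exactly what is needed; beyond that the argument is a verbatim specialization of the non-degenerate case already treated for the $N$-player dynamics.
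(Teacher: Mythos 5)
Your proposal is correct and follows the same route as the paper, which simply cites Girsanov's theorem and refers back to the argument of Proposition~\ref{PropPrelimitExistUnique}; you have merely made explicit the boundedness and progressive measurability of the drift $b_{u}$ (including the measurability of $t\mapsto \int w(y)\,\mathfrak{p}(t,dy)$), which the paper leaves implicit.
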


\begin{proof}
Existence and uniqueness in law are again a consequence of Girsanov's theorem;  cf.\ Proposition~\ref{PropPrelimitExistUnique}.
\end{proof}

The costs associated with a strategy $u\in \mathcal{U}_{fb}$, a flow of measures $\mathfrak{p}$, and an initial distribution $\nu\in \prbms{\mathbb{R}^{d}}$ with support in $O$ are given by
\begin{multline*}
	J(\nu,u;\mathfrak{p})\doteq \Mean\Biggl[ \int_{0}^{\tau} f\left(s,X(s),\int_{\mathbb{R}^{d}} w(y)\mathfrak{p}(s,dy),u(s,X) \right)ds \\
	+ F\left(\tau, X(\tau) \right) \Biggr],
\end{multline*}
where $((\Omega,\mathcal{F},(\mathcal{F}_{t}),\Prb),W,X)$ is a solution of \eqref{EqLimitDynamics} under $u$ with initial distribution $\nu$, and $\tau \doteq \tau^{X}\wedge T$ the random time horizon.

We will measure the minimal costs with respect to a class of stochastic open-loop strategies. To this end, let $\mathcal{A}$ denote the set of all quadruples $((\Omega,\mathcal{F},(\mathcal{F}_{t}),\Prb),\xi,\alpha,W)$ such that $(\Omega,\mathcal{F},(\mathcal{F}_{t}),\Prb)$ is a filtered probability space satisfying the usual hypotheses, $\xi$ an $\mathcal{F}_{0}$-measurable random variable with values in $O$, $\alpha$ a $\Gamma$-valued $(\mathcal{F}_{t})$-progressively measurable process, and $W$ a $d$-dimensional $(\mathcal{F}_{t})$-Wiener process. Any strategy $u\in \mathcal{U}_{fb}$, together with an initial distribution, induces an element of $\mathcal{A}$.

Given $((\Omega,\mathcal{F},(\mathcal{F}_{t}),\Prb),\xi,\alpha,W) \in \mathcal{A}$ and a flow of probability measures $\mathfrak{p}\in \mathcal{M}$, consider the stochastic integral equation
\begin{equation} \label{EqLimitDynamicsOL}
\begin{split}
	X(t) &= \xi +  \int_{0}^{t} \left( \alpha(s) + \bar{b}\left(s,X(s),\int_{\mathbb{R}^{d}} w(y) \mathfrak{p}(s,dy)\right) \right) ds \\
	&+ \sigma W(t),\quad t\in [0,T].
\end{split}
\end{equation}
Thanks to \Hypref{HypLipschitz}, $X$ is determined through Eq.~\eqref{EqLimitDynamicsOL} with $\Prb$-probability one; in particular, $X$ is defined on the given stochastic basis. The minimal costs associated with a flow of measures $\mathfrak{p}$ and an initial distribution $\nu\in \prbms{\mathbb{R}^{d}}$ with support in $O$ are now given by
\begin{multline*}
	V(\nu;\mathfrak{p})\doteq \inf_{((\Omega,\mathcal{F},(\mathcal{F}_{t}),\Prb),\xi,\alpha,W) \in \mathcal{A} : \Prb\circ \xi^{-1} = \nu} \\
	\Mean\left[ \int_{0}^{\tau} f\left(s,X(s),\int_{\mathbb{R}^{d}} w(y)\mathfrak{p}(s,dy),\alpha(s) \right)ds 
		+ F\left(\tau, X(\tau) \right) \right],
\end{multline*}
where $X$ is the process determined by $((\Omega,\mathcal{F},(\mathcal{F}_{t}),\Prb),\xi,\alpha,W)$ via Eq.~\eqref{EqLimitDynamicsOL}, and $\tau \doteq \tau^{X}\wedge T$ the random time horizon.

\begin{rem} \label{RemValueFnct}
Since any admissible feedback strategy induces a stochastic open-loop strategy, we always have
\[
	\inf_{u\in \mathcal{U}_{fb}} J(\nu,u;\mathfrak{p}) \geq V(\nu;\mathfrak{p}).
\]
If the non-degeneracy assumption \hypref{HypAddND} holds in addition to \Hyprefall, if $\bar{b}$ is continuous also in the time variable, and if the flow of measures $\mathfrak{p}$ is such that the mapping $t\mapsto \int w(y)\mathfrak{p}(t,dy)$ is continuous, then
\[
	\inf_{u\in \mathcal{U}_{fb}} J(\nu,u;\mathfrak{p}) = V(\nu;\mathfrak{p}).
\]
This follows, for instance, from the results of \citet{elkarouietalii87}; see, in particular, Proposition~2.6 and Remark~2.6.b) as well as Theorem~6.7 and Section~7 therein. Alternatively, one may use time discretization and discrete-time dynamic programming in analogy to Lemma~4.3 in \citet{fischer17}.

\end{rem}

The notion of solution we consider for the mean field game is the following:

\begin{defn} \label{DefMFGSolution}
A \emph{feedback solution of the mean field game} is a triple $(\nu,u,\mathfrak{p})$ such that
\begin{enumerate}[(i)]
	\item $\nu\in \prbms{\mathbb{R}^{d}}$ with $\supp(\nu) \subset O$, $u \in \mathcal{U}_{fb}$, and $\mathfrak{p}\in \mathcal{M}$;
	
	\item optimality property: strategy $u$ is optimal for $\mathfrak{p}$ and initial distribution $\nu$, that is,
\[
	J(\nu,u;\mathfrak{p}) = V(\nu;\mathfrak{p});
\]

	\item conditional mean field property: if $((\Omega,\mathcal{F},(\mathcal{F}_{t}),\Prb),W,X)$ is a solution of Eq.~\eqref{EqLimitDynamics} with flow of measures $\mathfrak{p}$, strategy $u$, and initial distribution $\nu$, then $\mathfrak{p}(t) = \Prb\left( X(t)\in \cdot \;|\; \tau^X > t \right)$ for every $t\in [0,T]$ such that $\Prb\left(\tau^X > t \right) > 0$.

\end{enumerate}
\end{defn}


\section{Approximate Nash equilibria from the mean field game} \label{SectApproxNash}

Throughout this section, we assume that the non-degeneracy condition \hypref{HypAddND} holds. If we have a feedback solution of the mean field game that satisfies a mild regularity condition, then we can construct a sequence of approximate Nash equilibria for the corresponding $N$-player game. This approximation result is the content of Theorem~\ref{ThApproximateNash} below.

In order to state the regularity condition, we set, for $\nu\in \prbms{\mathbb{R}^{d}}$ with support in $O$,
\begin{equation} \label{ExRefMeasure}
	\Theta_{\nu} \doteq \Law\left((\xi + \sigma W(t))_{t\in [0,T]} \right),
\end{equation}
where $W$ is a $d$-dimensional Wiener process and $\xi$ an independent $\mathbb{R}^{d}$-valued random variable with distribution $\nu$. Clearly, $\Theta_{\nu}$ is well-defined as an element of $\prbms{\mathcal{X}}$. 

The proof of Theorem~\ref{ThApproximateNash} below relies on the convergence, uniqueness  and regularity results given in the Appendix. The following subsets of probability measures will play an important role as they characterize the possible distributions of the limit processes. For $c \geq 0$, let $\mathcal{Q}_{\nu,c}$ be the set of all laws $\theta\in \prbms{\mathcal{X}}$ such that $\theta = \Prb\circ X^{-1}$ where
\begin{equation} \label{EqBoundedControlDynamics}
	X(t) \doteq \xi + \int_{0}^{t}v(s)ds + \sigma W(t),\quad t\in [0,T],
\end{equation}
$W$ is an $\mathbb{R}^{d}$-valued $(\mathcal{F}_{t})$-Wiener process defined on some filtered probability space $(\Omega,\mathcal{F},(\mathcal{F}_{t}),\Prb)$, $\xi$ is an $\mathbb{R}^{d}$-valued $\mathcal{F}_{0}$-measurable random variable with distribution $\Prb\circ \xi^{-1} = \nu$, and $v$ is an $\mathbb{R}^{d}$-valued $(\mathcal{F}_{t})$-progressively measurable bounded process with $\|v\|_{\infty} \leq c$. Clearly, $\mathcal{Q}_{\nu,0} = \{\Theta_{\nu}\}$. Also note that $\mathcal{Q}_{\nu,c}$ is compact and that any measure $\theta\in \mathcal{Q}_{\nu,c}$ is equivalent to $\Theta_{\nu}$; see Lemmata \ref{LemmaAppRegularityCompact} and \ref{LemmaAppRegularity}, respectively, in the Appendix.

We recall that a sequence of symmetric probability measures $(\nu_{N})_{N\in \mathbb{N}}$, with $\nu_N \in \mathcal P(\mathbb R^{N \cdot d})$ for all $N$, is called $\nu$-chaotic for some $\nu\in \prbms{\mathbb{R}^{d}}$ if, for each $k \in \mathbb{N}$ and for any choice of bounded and continuous functions $\psi_i : \mathbb R^d \to \mathbb R$, $i \in \{1,\ldots,k\}$, we have
\[
	\lim_{N \to \infty} \int_{\mathbb R^{N\cdot d}} \prod_{i=1}^k \psi_i (x_i)\; d\nu_N (x_1,\ldots,x_N) = \prod_{i=1}^k \int_{\mathbb R^d} \psi_i (x_i) \nu(dx_i).
\]

\begin{thrm} \label{ThApproximateNash}
Grant \hypref{HypAddND} in addition to \Hyprefall. Suppose the sequence of initial distributions $(\nu_{N})_{N\in \mathbb{N}}$ is $\nu$-chaotic for some $\nu\in \prbms{\mathbb{R}^{d}}$ with support in $O$. If $(\nu,u,\mathfrak{p})$ is a feedback solution of the mean field game such that, for Lebesgue-almost every $t\in [0,T]$,
\[
	\Theta_{\nu} \left(\left\{ \phi\in \mathcal{X} \;:\; u(t,\cdot) \text{ is discontinuous at }\phi \right\} \right) = 0,
\]
then
\[
	u^{N}_{i}(t,\boldsymbol{\phi}) \doteq u(t,\phi_{i}),\quad t\in [0,T],\; \boldsymbol{\phi}=(\phi_{1},\ldots,\phi_{N})\in \mathcal{X}^{N},\; i\in \{1,\ldots,N\},
\]
defines a strategy vector $\boldsymbol{u}^{N} = (u^{N}_{1},\ldots,u^{N}_{N}) \in \mathcal{U}^{N}_{fb}$. Moreover, for every $\epsilon > 0$, there exists $N_{0} = N_{0}(\epsilon)\in \mathbb{N}$ such that $\boldsymbol{u}^{N}$ is an $\epsilon$-Nash equilibrium for the $N$-player game whenever $N \geq N_{0}$.
\end{thrm}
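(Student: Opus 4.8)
Here is how I would approach the statement.

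The plan is to realise the profile $\boldsymbol u^N$ as a weakly interacting system of the reformulated type \eqref{EqPrelimitDynamics2}, to pass to the many-player limit via the propagation-of-chaos results of Appendix~\ref{AppConvergence}, and to compare the resulting limiting costs with the value $V(\nu;\mathfrak p)$. Admissibility is immediate: since $u\in\mathcal U_{fb}=\mathcal U_1$ is progressively measurable with values in $\Gamma$, each $u^N_i(t,\boldsymbol\phi)=u(t,\phi_i)$ lies in $\mathcal U_N$, so $\boldsymbol u^N\in\times^N\mathcal U_N=\mathcal U^N_{fb}$ by Proposition~\ref{PropPrelimitExistUnique}; the same proposition shows that every deviation $[(\boldsymbol u^N)^{-1},v]$ (the profile obtained from $\boldsymbol u^N$ by letting player~$1$ use $v\in\mathcal U_N$) is admissible, so the supremum over deviations in Definition~\ref{DefNash} ranges over all of $\mathcal U_N$. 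Because $\boldsymbol u^N$ and $\nu_N$ are symmetric, it suffices to control player~$1$, and the $\epsilon$-Nash property will follow once I show that $\lim_{N}J^N_1(\boldsymbol u^N)=V(\nu;\mathfrak p)$ and that
\[
\liminf_{N\to\infty}\inf_{v\in\mathcal U_N}J^N_1\bigl([(\boldsymbol u^N)^{-1},v]\bigr)\geq V(\nu;\mathfrak p).
\]

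For the first limit I would apply Appendix~\ref{AppConvergence} to the symmetric system under $\boldsymbol u^N$: the empirical measure $\mu^N$ on path space converges weakly to the deterministic law $\mathfrak m=\Law(X)$, where $X$ solves \eqref{EqLimitDynamics} under $(u,\mathfrak p)$ with $X(0)\sim\nu$, the limit being pinned down by the uniqueness result of Appendix~\ref{AppUniqueness}. The conditional mean field property~(iii) of Definition~\ref{DefMFGSolution} states exactly that $\mathfrak p(t)=\Prb(X(t)\in\cdot\mid\tau^X>t)$, so that $\int w\,d\pi^N(s)\to\int w\,d\mathfrak p(s)$ and the individual players become asymptotically i.i.d.\ copies of $X$. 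Passing to the limit in the running and terminal costs (using boundedness and continuity of $f,F$ and the a.s.\ continuity of the exit-time functional discussed below) gives $J^N_1(\boldsymbol u^N)\to J(\nu,u;\mathfrak p)$, which equals $V(\nu;\mathfrak p)$ by the optimality property~(ii).

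The lower bound on the deviated costs is the heart of the matter, and I would argue by contradiction. Suppose there are $\epsilon_0>0$, a subsequence, and deviations $v_N\in\mathcal U_N$ with $J^N_1\bigl([(\boldsymbol u^N)^{-1},v_N]\bigr)\leq V(\nu;\mathfrak p)-\epsilon_0$. Lifting player~$1$'s control to its occupation measure on $[0,T]\times\Gamma$ (tight since $\Gamma$ is compact) and using that the bounded drift and constant $\sigma$ render the trajectories tight, I would extract a weakly convergent subsequence of the joint law of player~$1$'s state, its relaxed control, and $\mu^N$. The crucial point, again from Appendix~\ref{AppConvergence}, is that a \emph{single} deviating player is asymptotically negligible, so $\mu^N$ still converges to the same $\mathfrak m$ and the flow felt by player~$1$ remains $\mathfrak p$. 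In the limit player~$1$ is therefore a controlled diffusion of the form \eqref{EqLimitDynamicsOL} against the fixed flow $\mathfrak p$; taking barycentres and using convexity of $\Gamma$ (\Hypref{HypActionSpace}) produces an admissible quadruple in $\mathcal A$ with initial law $\nu$. Its cost is at least $V(\nu;\mathfrak p)$ by definition of the value, while lower semicontinuity of the nonnegative, continuous costs gives $\liminf_N J^N_1\bigl([(\boldsymbol u^N)^{-1},v_N]\bigr)$ bounded below by that cost; here one invokes, as in Remark~\ref{RemValueFnct}, that under \hypref{HypAddND} the relaxed and strict control problems share the value $V$. This contradicts the standing assumption and establishes the bound.

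The main obstacle throughout is that the exit-time map $\tau$ on $\mathcal X$, the survivor indicators $\mathbf 1_{[0,\tau(\cdot))}(t)$ entering \eqref{ExCoeffDrift} and $\pi^N$, and hence the functionals $\phi\mapsto\tau(\phi)\wedge T$ and $\phi\mapsto F\bigl(\tau(\phi)\wedge T,\ \phi(\tau(\phi)\wedge T)\bigr)$ are \emph{discontinuous}, so none of the limit passages follow from naive continuous-mapping arguments. The resolution rests on non-degeneracy: when $\sigma$ has full rank and $\partial O\in C^2$ (\Hypref{HypStateSpace}), every candidate limit law is equivalent to the reference measure $\Theta_\nu$ of \eqref{ExRefMeasure} (Lemma~\ref{LemmaAppRegularity}), and under $\Theta_\nu$ almost every path reaches $\partial O$ transversally, so that $\tau$ and the cost functionals are a.s.\ continuous at the limit and their discontinuity sets are negligible; the compactness Lemma~\ref{LemmaAppRegularityCompact} keeps the relevant limits inside a set $\mathcal Q_{\nu,c}$ of such measures. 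The same mechanism, together with the hypothesis that $u(t,\cdot)$ is $\Theta_\nu$-a.s.\ continuous, guarantees that the drift coefficient converges along converging trajectories, driving the prelimit dynamics to the limit. It is precisely this step that fails when $\sigma$ may vanish, as the counter-example of Section~\ref{SectCounterExmpl} shows.
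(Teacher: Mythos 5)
Your proposal is correct and follows essentially the same route as the paper: admissibility via Proposition~\ref{PropPrelimitExistUnique}, symmetry reduction to player one, convergence $J^N_1(\boldsymbol u^N)\to J(\nu,u;\mathfrak p)=V(\nu;\mathfrak p)$ via the tightness/limit-identification lemmata of Appendix~\ref{AppConvergence} combined with the uniqueness of the McKean--Vlasov solution, and a lower bound on deviated costs obtained by lifting the deviating player's control to a relaxed control, noting that a single deviation does not alter the limit of the empirical measures, and identifying the limiting cost with the open-loop value against the fixed flow $\mathfrak p$ (the paper works with $\epsilon/2$-optimal deviations rather than your contradiction framing, and invokes the chattering lemma where you cite Remark~\ref{RemValueFnct}, but these are cosmetic differences). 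Your diagnosis of the key technical obstacle --- the discontinuity of the exit-time functional, resolved through equivalence of all limit laws to $\Theta_\nu$ and the resulting almost-sure continuity --- matches the paper's use of Lemmata~\ref{LemmaAppRegularity} and~\ref{LemmaAppRegularity2}.
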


\begin{proof}[Proof]
Let $(\nu,u,\mathfrak{p})$ be a feedback solution of the mean field game according to Definition~\ref{DefMFGSolution}; such a solution exists by hypothesis. For $N\in \mathbb{N}$, consider the strategy $\boldsymbol{u}^{N} = (u^{N}_{1},\ldots,u^{N}_{N})$ as in the statement. By Proposition~\ref{PropPrelimitExistUnique}, $\mathcal{U}^{N}_{fb} = \times^{N} \mathcal{U}_{N}$; in particular, $\boldsymbol{u}^{N}$ is a feedback strategy vector in $\mathcal U^N _{fb}$.

Let $\epsilon > 0$. By symmetry, it is enough to verify the $\epsilon$-Nash property for player one only. Thus, we have to show that there exists $N_{0} = N_{0}(\epsilon)\in \mathbb{N}$ such that for all $N \geq N_{0}$,
\begin{equation} \label{EqProofNash}
	J^{N}_{1}(\boldsymbol{u}^{N}) \leq \inf_{v\in \mathcal{U}_{N}} J^{N}_{1}\left([\boldsymbol{u}^{N,-1},v]\right) + \epsilon.
\end{equation}

\bigskip
\textbf{Step 1.}
We rewrite the system dynamics and costs in such way that we can apply the convergence, uniqueness and regularity results of the Appendix. In accordance with \eqref{ExCoeffDrift}, the definition of $b$ from Section~\ref{SectPrelimitSystems}, set, for $(t,\phi,\theta)\in [0,T]\times \mathcal{X}\times \prbms{\mathcal{X}}$,
\begin{equation} \label{ExProofNashDrift}
\begin{split}
	\hat{b}(t,\phi,\theta)&\doteq b\left(t,\phi,\theta, u(t,\phi)\right)\\
	&= \begin{cases}
	u(t,\phi) + \bar{b}\left(t, \phi(t), \frac{\int w(\tilde{\phi}(t)) \mathbf{1}_{[0,\tau(\tilde{\phi}))}(t) \theta(d\tilde{\phi})}{\int \mathbf{1}_{[0,\tau(\tilde{\phi}))}(t) \theta(d\tilde{\phi})} \right)
	&\text{if } \theta(\tau > t) > 0,\\
	u(t,\phi) + \bar{b}\left(t,\phi(t),w(0)\right) &\text{if } \theta(\tau > t) = 0,
\end{cases}
\end{split}
\end{equation}
where $\tau(\phi)\doteq \inf\{t\geq 0: \phi(t)\notin O\}$. Then, thanks to hypotheses \Hypref{HypMeasBounded} and \Hypref{HypActionSpace}, $\hat{b}$ is Borel measurable, progressive, and bounded. Thus, the measurability and boundedness assumptions (M) and (B) of the Appendix hold for $\hat{b}$. Moreover, assumption (C) and (L) of the Appendix, the conditions of almost continuity and of partial Lipschitz continuity, respectively, are satisfied for this choice of $\hat{b}$; see Section~\ref{AppAssumptionsCL} in the Appendix.

Let $((\Omega,\mathcal{F},(\mathcal{F}_{t}),\Prb),W,X)$ be a solution of Eq.~\eqref{EqLimitDynamics} with flow of measures $\mathfrak{p}$, feedback strategy $u$, and initial distribution $\nu$; such a solution exists and determines a unique measure
\[
	\theta_{\ast}\doteq \Prb\circ X^{-1}.
\]
Notice that $\theta_{\ast}\in \mathcal{Q}_{\nu,K}$ whenever $K\geq \|\hat{b}\|_{\infty}$. In terms of $\theta_{\ast}$ and the canonical process $\hat{X}$, we can rewrite the costs associated with strategy $u$, flow of measures $\mathfrak{p}$, and initial distribution $\nu$ as
\begin{multline*}
	J(\nu,u;\mathfrak{p}) = \Mean_{\theta_{\ast}}\Biggl[ \int_{0}^{\tau} f\left(s,\hat{X}(s),\int_{\mathbb{R}^{d}} w(y)\mathfrak{p}(s,dy),u(s,\hat{X}) \right)ds \\
	+ F\left(\tau, \hat{X}(\tau) \right) \Biggr],
\end{multline*} 
where $\tau\doteq \tau^{\hat{X}}\wedge T$. Define $\hat{f}\!: [0,T]\times \mathcal{X}\times \prbms{\mathcal{X}} \rightarrow \mathbb{R}$ by
\[
	\hat{f}(t,\phi,\theta)\doteq \begin{cases}
		f\left(t,\phi(t), \frac{\int w(\tilde{\phi}(t)) \mathbf{1}_{[0,\tau(\tilde{\phi}))}(t) \theta(d\tilde{\phi})}{\int \mathbf{1}_{[0,\tau(\tilde{\phi}))}(t) \theta(d\tilde{\phi})},u(t,\phi) \right)
		&\text{if } \theta(\tau > t) > 0,\\
		f\left(t,\phi(t), w(0),u(t,\phi) \right) &\text{if } \theta(\tau > t) = 0,
	\end{cases}
\]
and a function $\hat{G}\!: \mathcal{X}\times \prbms{\mathcal{X}} \rightarrow \mathbb{R}$ by
\begin{equation} \label{ExProofNashCosts}
	\hat{G}(\phi,\theta)\doteq \int_{0}^{\tau(\phi)\wedge T} \hat{f}\left(s,\phi,\theta \right)ds + F\bigl(\tau(\phi)\wedge T, \phi(\tau(\phi)\wedge T) \bigr).
\end{equation}
Then
\[
	J(\nu,u;\mathfrak{p}) = \Mean_{\theta_{\ast}}\left[ \hat{G}(\hat{X},\theta_{\ast}) \right].
\]
The function $\hat{G}$ is bounded thanks to hypothesis \Hypref{HypMeasBounded}. Moreover, since $\theta_{\ast}\in \mathcal{Q}_{\nu,K}$ for $K\geq \|\hat{b}\|_{\infty}$ and thanks to \Hypref{HypCont},
\begin{equation} \label{EqProofGhat}
	\begin{split}
		\Theta_{\nu}\bigl(\phi\in \mathcal{X}: \exists\, (\phi_{n},\theta_{n})\subset \mathcal{X}\times \prbms{\mathcal{X}} &\text{ such that } \hat{G}(\phi_{n},\theta_{n}) \not\to \hat{G}(\phi,\theta_{\ast}) \\
		&\text{ while } (\phi_{n},\theta_{n})\to (\phi,\theta_{\ast}) \bigr) = 0,
	\end{split}
\end{equation}
To be more precise, one checks that $\hat{f}$ satisfies condition (C) as does $\hat{b}$; see Section~\ref{AppAssumptionsCL} of the Appendix. Eq.~\eqref{EqProofGhat} is then a consequence of the dominated convergence theorem, the continuity of $F$ and Part~\eqref{LemmaAppRegularity2Cont} of Lemma~\ref{LemmaAppRegularity2}. Eq.~\eqref{EqProofGhat} says that $\hat{G}$ is $\Theta_{\nu}\otimes \delta_{\theta_{\ast}}$-almost surely continuous. By Lemma~\ref{LemmaAppRegularity}, any $\theta\in \mathcal{Q}_{\nu,c}$, $c\geq 0$, is equivalent to $\Theta_{\nu}$. It follows that $\hat{G}$ is $\theta\otimes \delta_{\theta_{\ast}}$-almost surely continuous given any $\theta\in \mathcal{Q}_{\nu,c}$, any $c\geq 0$.

Recall that $(\nu,u,\mathfrak{p})$ is a feedback solution of the mean field game. Thanks to the conditional mean field property in Definition~\ref{DefMFGSolution} and the construction of $\hat{b}$ according to \eqref{ExProofNashDrift}, we have an alternative characterization of $\theta_{\ast}$, namely as a McKean-Vlasov solution of Eq.~\eqref{EqAppLimitDynamics}; see Definition~\ref{DefAppMVSolution} in Section~\ref{AppConvergence} of the Appendix. By Proposition~\ref{PropAppMVUnique} in Section~\ref{AppUniqueness} of the Appendix and since $\hat{b}$ satisfies assumption (L) there, we actually have that $\theta_{\ast}$ is the unique McKean-Vlasov solution of Eq.~\eqref{EqAppLimitDynamics} with initial distribution $\nu$.

\bigskip
\textbf{Step 2.}
For $N\in \mathbb{N}$, let $((\Omega_{N},\mathcal{F}_{N},(\mathcal{F}^{N}_{t}),\Prb_{N}),\boldsymbol{W}^{N},\boldsymbol{X}^{N})$ be a solution of Eq.~\eqref{EqPrelimitDynamics2} under strategy vector $\boldsymbol{u}^{N}$ with initial distribution $\nu_{N}$. Let $\mu^{N}$ denote the associated empirical measure on the path space $\mathcal{X}$. We are going to show that
\[
	\lim_{N\to\infty} J^{N}_{1}(\boldsymbol{u}^{N}) = J(\nu,u;\mathfrak{p}).
\]

To this end, for $N\in \mathbb{N}$, set
\[
	\tilde{b}_{N}(t,\boldsymbol{\phi},\theta)\doteq \hat{b}(t,\phi_{1},\theta),\quad (t,\boldsymbol{\phi},\theta)\in [0,T]\times \mathcal{X}^{N}\times \prbms{\mathcal{X}},
\]
where $\hat{b}$ is given by \eqref{ExProofNashDrift}. From Step~1 we know that $\hat{b}$ satisfies the continuity assumptions (C) and (L) of the Appendix as well as the measurability and boundedness assumptions (M) and (B), respectively. Assumptions (M) and (B) also apply to the coefficients $\tilde{b}_{N}$. Assumptions (ND) and (I) of the Appendix (non-degeneracy of the diffusion matrix and chaoticity of the initial distributions) hold by hypothesis. Moreover, we have that $((\Omega_{N},\mathcal{F}_{N},(\mathcal{F}^{N}_{t}),\Prb_{N}),\boldsymbol{W}^{N},\boldsymbol{X}^{N})$ is a solution of Eq.~\eqref{EqAppPrelimitDynamics} with initial distribution $\nu_{N}$. We can therefore apply Lemmata \ref{LemmaAppTightness} and \ref{LemmaAppLimitSolutions} to the sequence of empirical measures $(\mu_{N})_{N\in \mathbb{N}}$; in combination with Proposition~\ref{PropAppMVUnique}, this yields
\[
	\Prb_{N}\circ \left(\mu^{N}\right)^{-1} \stackrel{N\to\infty}{\longrightarrow} \delta_{\theta_{\ast}} \text{ in } \prbms{\prbms{\mathcal{X}}},
\]
where $\theta_{\ast}$ is the measure identified in Step 1 as the unique McKean-Vlasov solution of Eq.~\eqref{EqAppLimitDynamics} with initial distribution $\nu$.

Symmetry of the coefficients, chaoticity of the initial distributions, and the Tanaka-Sznitman theorem 
\citep[Proposition 2.2 in][]{sznitman89} now imply that
\begin{equation} \label{EqProofNashConvergence}
	\Prb_{N}\circ \left(X^{N}_{1},\mu^{N}\right)^{-1} \stackrel{N\to\infty}{\longrightarrow} \theta_{\ast}\otimes \delta_{\theta_{\ast}} \text{ in } \prbms{\mathcal{X}\times \prbms{\mathcal{X}}},
\end{equation}

Recalling \eqref{ExProofNashCosts} and since $u^{N}_{1}(t,\boldsymbol{\phi}) = u(t,\phi_{1})$, we can rewrite the costs associated with $\boldsymbol{u}^{N}$ as
\[
	J^{N}_{1}(\boldsymbol{u}^{N}) = \Mean_{\Prb_{N}}\left[ \hat{G}\left(X^{N}_{1},\mu^{N} \right) \right].
\]	 
By Step~1, we know that $\hat{G}$ is bounded and, since $\theta_{\ast}\in \mathcal{Q}_{\nu,K}$ for $K \geq \|\hat{b}\|_{\infty}$, also that $\hat{G}$ is $\theta_{\ast}\otimes \delta_{\theta_{\ast}}$-almost surely continuous. By the mapping theorem \citep[Theorem~5.1 in][p.\,30]{billingsley68}, convergence according to \eqref{EqProofNashConvergence} therefore implies that
\[
	J^{N}_{1}(\boldsymbol{u}^{N}) = \Mean_{\Prb_{N}}\left[ \hat{G}\left(X^{N}_{1},\mu^{N} \right) \right] \stackrel{N\to\infty}{\longrightarrow} \Mean_{\theta_{\ast}}\left[ \hat{G}(\hat{X},\theta_{\ast}) \right] = J(\nu,u;\mathfrak{p}).
\]

\bigskip
\textbf{Step 3.} For $N\in \mathbb{N}\setminus \{1\}$, choose $v^{N}_{1}\in \mathcal{U}_{N}$ such that
\[
	J^{N}_{1}\left([\boldsymbol{u}^{N,-1},v^{N}_{1}]\right) \leq \inf_{v\in \mathcal{U}_{N}} J^{N}_{1}\left([\boldsymbol{u}^{N,-1},v]\right) + \epsilon/2.	
\]
Let $((\tilde{\Omega}_{N},\tilde{\mathcal{F}}_{N},(\tilde{\mathcal{F}}^{N}_{t}),\tilde{\Prb}_{N}),\boldsymbol{\tilde{W}}^{N},\boldsymbol{\tilde{X}}^{N})$ be a solution of Eq.~\eqref{EqPrelimitDynamics2} under the strategy vector $[\boldsymbol{u}^{N,-1},v^{N}_{1}]$ with initial distribution $\nu_{N}$. Let $\tilde{\mu}^{N}$ denote the associated empirical measure on path space. We are going to show that
\[
	\liminf_{N\to \infty} J^{N}_{1}([\boldsymbol{u}^{N,-1},v^{N}_{1}]) \geq V(\nu;\mathfrak{p}).
\]

To this end, redefine $\tilde{b}_{N}$, for $N\in \mathbb{N}\setminus \{1\}$, by
\[
	\tilde{b}_{N}(t,\boldsymbol{\phi},\theta)\doteq b\left(t,\phi_{1},\theta,v^{N}_{1}(t,\boldsymbol{\phi})\right),\quad (t,\boldsymbol{\phi},\theta)\in [0,T]\times \mathcal{X}^{N}\times \prbms{\mathcal{X}}.
\]
Assumptions (M), (B), (ND), and (I) of the Appendix continue to hold. With the new definition of the coefficient $\tilde{b}_{N}$, $((\tilde{\Omega}_{N},\tilde{\mathcal{F}}_{N},(\tilde{\mathcal{F}}^{N}_{t}),\tilde{\Prb}_{N}),\boldsymbol{\tilde{W}}^{N},\boldsymbol{\tilde{X}}^{N})$ is a solution of Eq.~\eqref{EqAppPrelimitDynamics} with initial distribution $\nu_{N}$. As in Step 2, we can apply Lemmata \ref{LemmaAppTightness} and \ref{LemmaAppLimitSolutions} in combination with Proposition~\ref{PropAppMVUnique}, but now to the sequence of empirical measures $(\tilde{\mu}_{N})_{N\in \mathbb{N}}$. This yields
\begin{equation} \label{EqProofNashEMtildeConvergence}
	\tilde{\Prb}_{N}\circ \left(\tilde{\mu}^{N}\right)^{-1} \stackrel{N\to\infty}{\longrightarrow} \delta_{\theta_{\ast}} \text{ in } \prbms{\prbms{\mathcal{X}}},
\end{equation}
where $\theta_{\ast}$ is still the unique McKean-Vlasov solution of Eq.~\eqref{EqAppLimitDynamics} with initial distribution $\nu$ found in Steps 1 and 2.

Now, interpret the feedback controls of the first player as stochastic open-loop relaxed controls. To this end, let $\mathcal{R}\doteq \mathcal{R}_{\Gamma}$ be the set of $\Gamma$-valued deterministic relaxed controls over $[0,T]$; see Appendix~\ref{AppRelaxed}. For $N\in \mathbb{N}$, let $\tilde{\rho}^{N}_{1}$ be the stochastic relaxed control induced by the feedback strategy $v^{N}_{1}$, that is, $\tilde{\rho}^{N}_{1}$ is the $\mathcal{R}$-valued $(\tilde{\mathcal{F}}^{N}_{t})$-adapted random variable determined by
\[
	\tilde{\rho}^{N}_{1,\omega}\bigl(B\times I\bigr)\doteq \int_{I}\delta_{v^{N}_{1}(t,\tilde{X}^{N}_{1}(\cdot,\omega))}(B)dt,\quad B\in \Borel{\Gamma},\; I\in \Borel{[0,T]},\;\omega \in \tilde{\Omega}_{N}.
\]

Let $\mathcal{A}_{rel}$ denote the set of all quadruples $((\Omega,\mathcal{F},(\mathcal{F}_{t}),\Prb),\xi,\rho,W)$ such that $(\Omega,\mathcal{F},(\mathcal{F}_{t}),\Prb)$ is a filtered probability space satisfying the usual hypotheses, $\xi$ an $\mathcal{F}_{0}$-measurable random variable with values in $O$, $\rho$ an $\mathcal{R}$-valued $(\mathcal{F}_{t})$-adapted random variable, and $W$ a $d$-dimensional $(\mathcal{F}_{t})$-Wiener process. Using the convergence of $(\tilde{\mu}^{N})$ according to \eqref{EqProofNashEMtildeConvergence} and weak convergence arguments analogous to those of Section~\ref{AppConvergence} in the Appendix, one verifies that the sequence
\begin{equation} \label{EqProofTightness}
	\left( \tilde{\Prb}_{N}\circ \left(\tilde{X}^{N}_{1}, \tilde{\rho}^{N}_{1}, \tilde{W}^{N}_{1}, \tilde{\mu}^{N}\right)^{-1} \right)_{N\in \mathbb{N}} \text{ is tight in } \prbms{\mathcal{X}\times \mathcal{R}\times \mathcal{X}\times \prbms{\mathcal{X}} }
\end{equation}
and that its limit points are concentrated on measures $\Prb\circ (X,\rho,W)^{-1}\otimes \delta_{\theta_{\ast}}$ where
$((\Omega,\mathcal{F},(\mathcal{F}_{t}),\Prb),\xi,\rho,W)\in \mathcal{A}_{rel}$
for some $\mathcal{F}_{0}$-measurable $\mathbb{R}^{d}$-valued random variable $\xi$ with $\Prb\circ \xi^{-1} = \nu$, and $X$ is a continuous $(\mathcal{F}_{t})$-adapted process satisfying
\begin{equation} \label{EqLimitDynamicsOLrel}
\begin{split}
	X(t) &= \xi +  \int_{\Gamma\times [0,t]}\! \gamma\; \rho(d\gamma,ds) + \int_{0}^{t} \bar{b}\left(s,X(s),\int_{\mathbb{R}^{d}} w(y) \mathfrak{p}(s,dy)\right) ds \\
	&+ \sigma W(t),\quad t\in [0,T],
\end{split}
\end{equation}
with flow of measures $\mathfrak{p}$ given by
\[
	\mathfrak{p}(t)\doteq \theta_{\ast}\left( \hat{X}(t)\in \cdot \,|\, \tau^{\hat{X}} > t \right),\quad t\in [0,T].
\]
Notice that $X$ is uniquely determined (with $\Prb$-probability one) by Eq.~\eqref{EqLimitDynamicsOLrel}. The stochastic relaxed control in \eqref{EqLimitDynamicsOLrel} actually corresponds to an ordinary stochastic open-loop control, namely to the $\Gamma$-valued process
\begin{equation} \label{EqProofInducedControl}
	\alpha(t,\omega)\doteq \int_{\Gamma} \gamma \dot{\rho}_{s,\omega}(d\gamma),\quad (t,\omega)\in [0,T]\times \Omega.
\end{equation}
The process $X$ is therefore the unique solution of Eq.~\eqref{EqLimitDynamicsOL} corresponding to $((\Omega,\mathcal{F},(\mathcal{F}_{t}),\Prb),\xi,\alpha,W) \in \mathcal{A}$ and the flow of measures $\mathfrak{p}$ induced by $\theta_{\ast}$.

In analogy with Step~1, define $\bar{f}\!: [0,T]\times \mathcal{X}\times \prbms{\mathcal{X}}\times \Gamma \rightarrow \mathbb{R}$ by
\[
	\bar{f}(t,\phi,\theta,\gamma)\doteq \begin{cases}
		f\left(t,\phi(t), \frac{\int w(\tilde{\phi}(t)) \mathbf{1}_{[0,\tau(\tilde{\phi}))}(t) \theta(d\tilde{\phi})}{\int \mathbf{1}_{[0,\tau(\tilde{\phi}))}(t) \theta(d\tilde{\phi})},\gamma \right)
		&\text{if } \theta(\tau > t) > 0,\\
		f\left(t,\phi(t),w(0),u(t,\phi) \right) &\text{if } \theta(\tau > t) = 0,
	\end{cases}
\]
and a function $\bar{G}\!: \mathcal{X}\times \prbms{\mathcal{X}}\times \mathcal{R} \rightarrow \mathbb{R}$ by
\begin{equation} \label{ExProofOLCosts}
\begin{split}
	\bar{G}(\phi,\theta,r)\doteq \int_{\Gamma\times [0,T]} \mathbf{1}_{[0,\tau(\phi)\wedge T)}(s)\cdot \bar{f}\left(s,\phi,\theta,\gamma \right) r(d\gamma,ds) \\
	+ F\bigl(\tau(\phi)\wedge T, \phi(\tau(\phi)\wedge T) \bigr).
\end{split}
\end{equation}
Then
\[
	J^{N}_{1}([\boldsymbol{u}^{N,-1},v^{N}_{1}]) = \Mean_{\tilde{\Prb}_{N}}\left[ \bar{G}\left(\tilde{X}^{N}_{1},\tilde{\mu}^{N},\tilde{\rho}^{N}_{1} \right) \right].
\]
The function $\bar{G}$ is bounded thanks to hypothesis \Hypref{HypMeasBounded}. By \Hypref{HypCont} and arguments analogous to those of Step~1, one checks that
\begin{equation} \label{EqProofGbar}
	\begin{split}
		\Theta_{\nu}\bigl(&\phi\in \mathcal{X}: \exists\, r\in \mathcal{R},\; (\phi_{n},\theta_{n},r_{n})\subset \mathcal{X}\times \prbms{\mathcal{X}}\times \mathcal{R} \text{ such that}\\
		&\hat{G}(\phi_{n},\theta_{n},r_{n}) \not\to \hat{G}(\phi,\theta_{\ast},r)
		\text{ while } (\phi_{n},\theta_{n},r_{n})\to (\phi,\theta_{\ast},r) \bigr) = 0,
	\end{split}
\end{equation}
By Lemma~\ref{LemmaAppRegularity}, any $\theta\in \mathcal{Q}_{\nu,c}$, $c\geq 0$, is equivalent to $\Theta_{\nu}$. In view of \eqref{EqProofGbar}, it follows that $\bar{G}$ is $Q$-almost surely continuous given any $Q\in \prbms{\mathcal{X}\times \prbms{\mathcal{X}}\times \mathcal{R}}$ such that $Q\circ \pi_{\mathcal{X}\times \prbms{\mathcal{X}}}^{-1} = \theta\otimes \delta_{\theta_{\ast}}$ for some $\theta\in \mathcal{Q}_{\nu,c}$, some $c\geq 0$, where $\pi_{\mathcal{X}\times \prbms{\mathcal{X}}}$ denotes the projection of $\mathcal{X}\times \prbms{\mathcal{X}}\times \mathcal{R}$ onto its first two components. Using the mapping theorem of weak convergence, we conclude that
\[
	\liminf_{N\to \infty} J^{N}_{1}([\boldsymbol{u}^{N,-1},v^{N}_{1}]) \geq \inf_{((\Omega,\mathcal{F},(\mathcal{F}_{t}),\Prb),\xi,\rho,W) \in \mathcal{A}_{rel} : \Prb\circ \xi^{-1} = \nu} \\
		\Mean\left[ \bar{G}\left( X,\theta_{\ast},\rho \right) \right].
\]

It remains to check that
\[
	\inf_{((\Omega,\mathcal{F},(\mathcal{F}_{t}),\Prb),\xi,\rho,W) \in \mathcal{A}_{rel} : \Prb\circ \xi^{-1} = \nu} \\
			\Mean\left[ \bar{G}\left( X,\theta_{\ast},\rho \right) \right] = V(\nu;\mathfrak{p}).
\]
Inequality ``$\geq$'' holds by definition of $\bar{G}$ and because $\mathcal{A} \subseteq \mathcal{A}_{rel}$ if one identifies ordinary stochastic open-loop controls with the induced stochastic relaxed controls. The opposite inequality is a consequence of what is called ``chattering lemma'', i.e., the fact that relaxed controls can be approximated by ordinary controls; see, for instance, Theorem~3.5.2 in \citet[p.\,59]{kushner90} or Section~4 in \citet{elkarouietalii87}. In our situation, the stochastic relaxed control in Eq.~\eqref{EqLimitDynamicsOLrel} corresponds to an ordinary stochastic open-loop control. If the running costs $f$ are additive convex in the control, then inequality ``$\leq$'' can be verified more directly by using \eqref{EqProofInducedControl} and Jensen's inequality.

\bigskip
\textbf{Step 4.} For every $N\in \mathbb{N}\setminus \{1\}$,
\[
\begin{split}
	& J^{N}_{1}(\boldsymbol{u}^{N}) - \inf_{v\in \mathcal{U}_{N}} J^{N}_{1}([\boldsymbol{u}^{N,-1},v]) \\
	&\leq J^{N}_{1}(\boldsymbol{u}^{N}) - J(\nu,u;\mathfrak{p}) + J(\nu,u;\mathfrak{p}) -  J^{N}_{1}([\boldsymbol{u}^{N,-1},v^{N}_{1}]) + \epsilon/2.
\end{split}
\]
By Steps 2 and 3, there exists $N_{0} = N_{0}(\epsilon)$ such that for all $N \geq N_{0}$,
\[
	J^{N}_{1}(\boldsymbol{u}^{N}) - J(\nu,u;\mathfrak{p}) + V(\nu;\mathfrak{p}) -  J^{N}_{1}([\boldsymbol{u}^{N,-1},v^{N}_{1}]) \leq \epsilon/2.
\]
Since $(\nu,u;\mathfrak{p})$ is a solution of the mean field game, $J(\nu,u;\mathfrak{p}) = V(\nu;\mathfrak{p})$. It follows that for all $N \geq N_{0}$,
\[
	J^{N}_{1}(\boldsymbol{u}^{N}) - \inf_{v\in \mathcal{U}_{N}} J^{N}_{1}([\boldsymbol{u}^{N,-1},v]) \leq \epsilon,
\]
which establishes \eqref{EqProofNash}.
\end{proof}


\section{Existence of solutions} \label{SectExistence}

Throughout this section, hypotheses \Hyprefall\ as well as \hypref{HypAddND} are in force. Under these and some additional assumptions, we show that, given any initial distribution, a feedback solution of the mean field game exists in the sense of Definition~\ref{DefMFGSolution}; see Theorem~\ref{nash-existence} in Subsection~\ref{SectExistMFG}. To prove Theorem~\ref{nash-existence}, we use a \mbox{BSDE} to approach to the control problem for fixed flow of measures (Subsection~\ref{SectExistOptControl}) and a fixed point argument, both in the spirit of \citet{carmonalacker15}.

In Subsection~\ref{SectExistContinuity}, we provide conditions that guarantee the existence of a feedback solution to the mean field game with continuous Markovian feedback strategy; see Corollary~\ref{crll:nash-cont}. Such a solution satisfies, in particular, the requirements of Theorem~\ref{ThApproximateNash}.

As above, let $\nu \in \prbms{\mathbb{R}^{d}}$ with support in $O$ denote the initial distribution. Since $\nu$ will be fixed, the dependence on $\nu$ will often be omitted in the notation.

\subsection{Optimal controls for a given flow of measures} \label{SectExistOptControl}

Let $\mathfrak p \in \mathcal M$ be a given flow of measures, and define the corresponding flow $m_{\mathfrak p}$ of means by
\begin{equation} \label{def-m-p}
	m_{\mathfrak p}(t)\doteq \int_{\mathbb{R}^{d}} w(y) \mathfrak p(t,dy),\quad t\in [0,T].
\end{equation}

Recall that $\hat{X}$ denotes the canonical process on the path space $\mathcal X \doteq \mathbf C ([0,T];\mathbb R^d)$ (equipped with the sup-norm topology). Also recall \eqref{ExRefMeasure}, the definition of $\Theta_{\nu}$. Set $\Omega\doteq \mathcal{X}$, $X \doteq \hat{X}$, let $\mathcal{F}$ be the $\Theta_{\nu}$-completion of the Borel $\sigma$-algebra $\Borel{\mathcal{X}}$, and set $\mathbf{P}\doteq \Theta_{\nu}$ (extended to all null sets). Let $(\mathcal F_t)$ be the $\mathbf P$-augmentation of the filtration generated by $X$. Lastly, set
\begin{align*}
	&\xi(\omega)\doteq X(0,\omega), & & W(t,\omega)\doteq \sigma^{-1}\left( X(t,\omega) - \xi(\omega)\right),& &(t,\omega)\in [0,T]\times \Omega,&
\end{align*}
which is well-defined thanks to \hypref{HypAddND}. Then $(\Omega,\mathcal{F},(\mathcal F_t),\mathbf{P})$ is a filtered probability space satisfying the usual assumptions, $W$ a $d$-dimensional $(\mathcal F_t)$-Wiener process, and $\xi$ an $\mathcal{F}_{0}$-measurable random variable with law $\nu$ independent of $W$. Moreover, by construction,
\begin{equation} \label{dynX}
	X(t) = \xi + \sigma W(t) , \quad t\in [0,T],
\end{equation}
and $\mathbf P \circ X^{-1} = \Theta_{\nu}$. 

Let $\mathcal{U}$ denote the set of all $\Gamma$-valued $(\mathcal F_t)$-progressively measurable processes. The filtration $(\mathcal F_t)$ is the $\mathbf{P}$-augmentation of the filtration generated by $X$. Any feedback strategy $u\in \mathcal{U}_{fb}$ therefore induces an element of $\mathcal{U}$ through $\mathrm{u}_{t}\doteq u(t,X)$, $t\in [0,T]$. On the other hand, given $\mathrm{u}\in \mathcal{U}$, we can find a progressive functional $u\in \mathcal{U}_{1}$ such that $u(\cdot,X) = \mathrm{u}_{\cdot}$ $\mathrm{Leb}_{T}\otimes \mathbf P$-almost surely, where $\mathrm{Leb}_{T}$ denotes Lebesgue measure on $[0,T]$. Lastly, by Proposition~\ref{PropLimitExistUnique} and \hypref{HypAddND}, $\mathcal{U}_{fb} = \mathcal{U}_{1}$. We can therefore identify elements of $\mathcal{U}$ with those of $\mathcal{U}_{fb}$, and vice versa.

Let $\mathrm{u} \in \mathcal{U}$. By Girsanov's theorem and the boundedness of $\bar b$, there exists a probability measure $\mathbf P^{\mathfrak p,\mathrm u} \sim \mathbf P$ and a $\mathbf P^{\mathfrak p,\mathrm u}$-Wiener process $W^{\mathfrak p,\mathrm u}$ such that
\[
	dX(t) = \left(\mathrm u_t + \bar b\left(t,X(t), m_{\mathfrak p}(t) \right) \right) dt + \sigma dW^{\mathfrak p,\mathrm u}(t).
\]
The costs associated with $\mathrm{u}$ are then given by
\[
	J^{\mathfrak p} (\mathrm u) \doteq \mathbf E^{\mathfrak p,\mathrm u} \left[ \int_0 ^{\tau} f\left(t,X(t), m_{\mathfrak p}(t) ,\mathrm u_t\right)dt + F( \tau, X_{\tau}) \right].
\]
If $u\in \mathcal{U}_{fb} = \mathcal{U}_{1}$ is such that $u(\cdot,X) = \mathrm{u}_{\cdot}$ $\mathrm{Leb}_{T}\otimes \mathbf P$-almost surely, then (cf.\ Section~\ref{SectLimitSystems})
\[
	J^{\mathfrak p} (\mathrm u) = J(\nu,u;\mathfrak{p}).
\]
Since elements of $\mathcal{U}$ can be identified with those of $\mathcal{U}_{fb}$, and vice versa, we have
\[
	V^{\mathfrak p} \doteq \inf_{\mathrm u \in \mathcal U} J^{\mathfrak p} (\mathrm u) = \inf_{u \in \mathcal{U}_{fb}} J(\nu,u;\mathfrak{p}).
\]
In view of Remark~\ref{RemValueFnct}, if $\mathfrak{p}$ is such that $t\mapsto m_{\mathfrak{p}}(t)$ is continuous, then
\[
	V^{\mathfrak p} = V(\nu;\mathfrak{p}).
\]

Define the Hamiltonian $h$ and the minimized Hamiltonian $H$ as follows
\begin{align*}
	h(t,x,m,z,\gamma) &\doteq f(t,x,m,\gamma) + z \cdot \sigma^{-1} b(t,x,m,\gamma),\\
	H(t,x,m,z) &\doteq \inf_{\gamma \in \Gamma} h(t,x,m,z,\gamma),
\end{align*}
where $b(t,x,m,\gamma)\doteq \gamma + \bar b(t,x,m)$.
The set in which the infimum is attained is denoted by
\[
	A(t,x,m,z) \doteq \left\{ \gamma \in \Gamma : h(t,x,m,z,\gamma) = H(t,x,m,z) \right\}.
\]
This set is nonempty since $\Gamma$ is compact and the function $h$ is continuous in $\gamma$ thanks to \Hypref{HypActionSpace} and \Hypref{HypCont}, respectively.

Now, consider the following \mbox{BSDE} with random terminal date $\tau \doteq \tau^X \wedge T$: 
\begin{equation} \label{bsde-H}
\begin{split}
	Y^{\mathfrak p}(t) =  F(\tau, X(\tau)) &+ \int_{t}^{T} H\left(s,X(s), m_{\mathfrak p}(s), Z^{\mathfrak p}(s)\right) \mathbf 1_{(s<\tau)} ds \\
	&- \int_{t}^{T} Z^{\mathfrak p}(s) dW(s),
\end{split}
\end{equation}
where $X$ follows the forward dynamics \eqref{dynX} and $W$ is a Wiener process under $\mathbf P$ as before. As in \cite{darlingpardoux97}, we adopt the convention that for any solution to the \mbox{BSDE} above:
\begin{align*}
	& \mathbf 1_{(s > \tau)} Y(s) = F(\tau, X(\tau)),& &\mathbf 1_{(s > \tau)} Z(s) =0,& &\mathbf 1_{(s > \tau)} H(s,x,m,z) = 0.&
\end{align*} 
Observe that since the driver is Lipschitz in $z$ and does not depend on $y$, assumptions (4), (5), (7) and (21) in \cite{darlingpardoux97} are satisfied. Moreover, their assumption (25) is also fulfilled due to the fact that the terminal time $\tau$, the terminal value $F$, the drift $\bar b$ and the running cost $f$ are all bounded; cf.\ \Hypref{HypMeasBounded}. Hence Theorem~3.4 of \citet{darlingpardoux97} applies, yielding that the \mbox{BSDE} \eqref{bsde-H} has a unique solution $(Y^{\mathfrak p},Z^{\mathfrak p})$ in the space of all progressively measurable processes $K \doteq (Y,Z)$ with values in $\mathbb R^d \times \mathbb R^{d\times d}$ such that
\[
	\mathbf E \left[ \int_0 ^{\tau \wedge T} |K(t)|^2 dt \right] < \infty .
\]
For each $\mathrm u \in \mathcal U$ we can proceed in a similar way to get existence and uniqueness (in the same space of processes as above) of the solution $(Y^{\mathfrak p,\mathrm u},Z^{\mathfrak p ,\mathrm u})$ to the following \mbox{BSDE}
\begin{equation} \label{bsde-h}
\begin{split}
	Y^{\mathfrak p ,\mathrm u}(t) =  F(\tau , X(\tau)) &+ \int_{t} ^{T} h \left(s,X(s), m_{\mathfrak p}(s) , Z^{\mathfrak p ,\mathrm u}(s), \mathrm u_s \right) \mathbf 1_{(s<\tau)} ds \\ 
	&- \int_{t} ^{T} Z^{\mathfrak p,\mathrm u}(s) dW(s).
\end{split}
\end{equation}
Changing measure from $\mathbf P$ to $\mathbf P^{\mathfrak p,\mathrm u}$ we obtain
\[
\begin{split}
	Y^{\mathfrak p ,\mathrm u}(t) = F(\tau, X(\tau)) &+ \int_{t} ^{T} f \left(s,X(s), m_{\mathfrak p}(s) , \mathrm u_s \right) \mathbf 1_{(s < \tau)} ds \\
	&- \int_{t} ^{T} Z^{\mathfrak p ,\mathrm u}(s) dW^{\mathfrak p ,\mathrm u}(s).
\end{split}
\]
Hence, taking conditional expectation with respect to $\mathbf P^{\mathfrak p ,\mathrm u}$ and $\mathcal F_t$ on both sides, we have
\[
	Y^{\mathfrak p ,\mathrm u}(t) = \mathbf E^{\mathfrak p ,\mathrm u} \left[ F(\tau, X(\tau)) + \int_{t}^{T} f \left (s,X(s), m_{\mathfrak p}(s), \mathrm u_s \right) \mathbf 1_{(s<\tau)} ds \mid \mathcal F_t \right],
\]
which implies $\mathbf E [Y^{\mathfrak p,\mathrm u}(0)] = \mathbf E^{\mathfrak p ,\mathrm u} [Y^{\mathfrak p ,\mathrm u}(0)] = J^{\mathfrak p} (\mathrm u)$. The first equality is due to the fact that the law of $\xi$ is the same under both probability measures.

Since $H \leq h$ and the forward state variable $X$ is the same for both \mbox{BSDEs} (\ref{bsde-H}) and (\ref{bsde-h}) (so that they both have the same random terminal time), we can apply \citet[Corollary 4.4.2]{darlingpardoux97} yielding $Y_t ^{\mathfrak p} \le Y_t ^{\mathfrak p ,\mathrm u}$ a.s. for all $t\in [0,T]$. This implies that $\mathbf E [Y^{\mathfrak p}(0)] \leq \mathbf E [Y^{\mathfrak p,\mathrm u}(0)] = J^{\mathfrak p} (\mathrm u)$ for all $\mathrm u \in \mathcal U$, and therefore $\mathbf E [Y^{\mathfrak p}(0)] \leq V^{\mathfrak p}$.

By a standard measurable selection argument, there exists a measurable function $\hat{u}\!: [0,T]\times \mathbb{R}^{d}\times \mathbb{R}^{d_0}\times \mathbb{R}^{d\times d} \rightarrow \Gamma$ such that
\[
	\hat{\mathrm u} (t,x,m,z) \in A(t,x,m,z) \text{ for all } (t,x,m,z).
\]
Hence, letting
\begin{equation} \label{opt-control}
	\mathrm u_t ^{\mathfrak p} \doteq \hat{\mathrm u}\left(t,X(t), m_{\mathfrak p}(t) ,Z^{\mathfrak p}(t) \right) ,
\end{equation} 
the uniqueness of the solution of \mbox{BSDEs} with random terminal time as in \citet[Theorem 3.4]{darlingpardoux97} gives $Y^{\mathfrak p}(t) = Y^{\mathfrak p,\mathrm u^{\mathfrak p}}(t)$, which in turn implies $V^{\mathfrak p} = J^{\mathfrak p} (\mathrm u^{\mathfrak p})$.

\subsection{Existence of solutions of the mean field game} \label{SectExistMFG}

We are going to apply the Brouwer-Schauder-Tychonoff fixed point theorem \citep[for instance,][17.56 Corollary]{aliprantisborder06}, which we recall for the reader's convenience:

\begin{thrm}[Brouwer-Schauder-Tychonoff] \label{thm:fixed}
Let $\mathcal K$ be a nonempty compact convex subset of a locally convex Hausdorff space, and let $\Psi\!: \mathcal K \to \mathcal K$ be a continuous function. Then the set of fixed points of $\Psi$ is compact and nonempty.
\end{thrm}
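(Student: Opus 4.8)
The plan is to establish the two assertions separately, dealing first with non-emptiness (the substantive claim, which is the Schauder--Tychonoff theorem) and then with compactness of the fixed-point set. For non-emptiness I would reduce the infinite-dimensional problem to the finite-dimensional Brouwer fixed point theorem via Schauder projections, exploiting the compactness of $\mathcal K$ together with the fact that a locally convex topology admits a base of convex, balanced neighbourhoods of the origin.

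Concretely, fix a convex balanced open neighbourhood $U$ of the origin. By compactness choose $x_1,\ldots,x_n\in \mathcal K$ with $\mathcal K\subset \bigcup_{i=1}^{n}(x_i+U)$, and set $\mu_i(x)\doteq \max\bigl(0,\,1-p_U(x-x_i)\bigr)$, where $p_U$ is the Minkowski gauge of $U$; these functions are continuous and never vanish simultaneously on $\mathcal K$. The associated Schauder projection
\[
	P_U(x) \doteq \frac{\sum_{i=1}^{n}\mu_i(x)\,x_i}{\sum_{i=1}^{n}\mu_i(x)}
\]
is continuous and maps $\mathcal K$ into $C_U\doteq \operatorname{conv}\{x_1,\ldots,x_n\}$, a compact convex subset of a finite-dimensional subspace, hence homeomorphic to a compact convex subset of some $\mathbb R^{m}$. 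The key approximation property is $P_U(x)-x\in U$ for every $x\in \mathcal K$: indeed $P_U(x)$ is a convex combination of those $x_i$ with $\mu_i(x)>0$, for which $x-x_i\in U$ and hence $x_i-x\in U$ by balancedness, so convexity of $U$ gives the conclusion. Brouwer's theorem applied to the continuous self-map $P_U\circ\Psi$ of $C_U$ then yields $x_U\in C_U$ with $P_U(\Psi(x_U))=x_U$, whence $\Psi(x_U)-x_U\in U$.

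Letting $U$ range over the convex balanced neighbourhoods of the origin, directed by reverse inclusion, I obtain a net $(x_U)$ in $\mathcal K$, which by compactness has a subnet converging to some $x^{*}\in \mathcal K$. To see that $x^{*}$ is a fixed point, take any such neighbourhood $V$ and a convex balanced $W$ with $W+W+W\subset V$. Continuity of $\Psi$ and convergence of the subnet make $\Psi(x_U)-\Psi(x^{*})$ and $x_U-x^{*}$ eventually lie in $W$, while $\Psi(x_U)-x_U\in U\subset W$ eventually; writing the telescoping sum $\Psi(x^{*})-x^{*}=[\Psi(x^{*})-\Psi(x_U)]+[\Psi(x_U)-x_U]+[x_U-x^{*}]$ gives $\Psi(x^{*})-x^{*}\in V$ for every $V$, and the Hausdorff property forces $\Psi(x^{*})=x^{*}$. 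Compactness of the fixed-point set is then immediate: it is the preimage of the closed diagonal of $\mathcal K\times\mathcal K$ under the continuous map $x\mapsto(x,\Psi(x))$, hence closed in the compact space $\mathcal K$ and therefore compact.

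The main obstacle is the passage from the approximate fixed points $x_U$ to a genuine one: one must absorb the approximation error $\Psi(x_U)-x_U$ into an arbitrary neighbourhood while simultaneously controlling the oscillation of $\Psi$, and it is precisely here that the locally convex structure (supplying the gauges $p_U$ and hence the Schauder projections) and the compactness of $\mathcal K$ (supplying the convergent subnet) are both indispensable. Brouwer's theorem itself is invoked as a black box.
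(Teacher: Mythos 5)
Your proof is correct. Note, however, that the paper does not prove this statement at all: it is quoted as a known classical result with a citation to Aliprantis and Border (17.56 Corollary), so there is no internal argument to compare yours against. What you have written is the standard Schauder-projection proof of the Schauder--Tychonoff theorem, and all the delicate points are handled properly: the functions $\mu_i$ do not vanish simultaneously because the $x_i+U$ cover $\mathcal K$; the approximation property $P_U(x)-x\in U$ follows correctly from balancedness and convexity of $U$; the restriction of $P_U\circ\Psi$ to $C_U=\operatorname{conv}\{x_1,\dots,x_n\}$ is a genuine self-map because $C_U\subseteq\mathcal K$ by convexity of $\mathcal K$, and $C_U$ lives in a finite-dimensional (hence, by the Hausdorff assumption, Euclidean) subspace where Brouwer applies; the subnet argument correctly uses cofinality to guarantee that the index neighbourhood $U_\alpha$ is eventually contained in $W$, so the error term $\Psi(x_{U_\alpha})-x_{U_\alpha}$ can be absorbed; and the $W+W+W\subseteq V$ splitting together with the Hausdorff separation of the origin forces $\Psi(x^*)=x^*$. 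The compactness of the fixed-point set as the closed preimage of the diagonal is also right. The only thing your write-up leaves implicit is the (standard) fact that the Minkowski gauge $p_U$ of an open convex neighbourhood of the origin is continuous, which is what makes the $\mu_i$ continuous; this is harmless. In short, you have supplied a complete proof where the paper supplies only a reference.
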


We need to identify a suitable space $\mathcal K$ and a good function $\Psi$ in order to apply the fixed theorem above to our setting and obtain the existence of a solution to the mean field game. 

In addition to \Hyprefall\ and \hypref{HypAddND}, let us make the following assumptions:

\begin{ass}	\label{ass2}
\
\begin{enumerate}
	\item[(i)] The coefficients $\bar{b}$ and $f$ are continuous (jointly in all their variables).
	
	\item[(ii)] The set $A(t,x,m,z)$ is a singleton for all $(t,x,m,z) \in [0,T] \times \mathbb R^d \times \mathbb{R}^{d_0}\times \mathbb R^{d\times d}$.
\end{enumerate}
\end{ass}

By Assumption~\ref{ass2}(ii), the function $\hat{u}$ appearing in \eqref{opt-control} is uniquely determined. Moreover, $\hat{u}$ is continuous:

\begin{lemma} \label{lemma:uhat}
	The function $\hat{u}$ is continuous (jointly in all its variables).
\end{lemma}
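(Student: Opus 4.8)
The plan is to prove continuity of $\hat{u}$ by combining a compactness argument with the uniqueness of the minimizer guaranteed by Assumption~\ref{ass2}(ii). The key structural fact is that $\hat{u}(t,x,m,z)$ is, by definition, the unique element of the argmin set $A(t,x,m,z)$, and that this argmin is taken over the \emph{fixed compact} set $\Gamma$ of a function $h$ that depends continuously on all its arguments.

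First I would fix a convergent sequence $(t_n,x_n,m_n,z_n) \to (t,x,m,z)$ in $[0,T]\times \mathbb{R}^d \times \mathbb{R}^{d_0}\times \mathbb{R}^{d\times d}$ and set $\gamma_n \doteq \hat{u}(t_n,x_n,m_n,z_n) \in \Gamma$. Since $\Gamma$ is compact by \Hypref{HypActionSpace}, the sequence $(\gamma_n)$ has a convergent subsequence, say $\gamma_{n_k} \to \gamma_\ast \in \Gamma$. The goal is to show that every such subsequential limit $\gamma_\ast$ must equal $\hat{u}(t,x,m,z)$; since $\Gamma$ is compact (hence sequentially compact), this forces $\gamma_n \to \hat{u}(t,x,m,z)$ and gives continuity at the arbitrary point $(t,x,m,z)$.

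Next I would identify the limit. By definition $\gamma_{n_k}$ minimizes $h(t_{n_k},x_{n_k},m_{n_k},z_{n_k},\cdot)$ over $\Gamma$, i.e.\ $h(t_{n_k},x_{n_k},m_{n_k},z_{n_k},\gamma_{n_k}) \le h(t_{n_k},x_{n_k},m_{n_k},z_{n_k},\gamma)$ for every $\gamma \in \Gamma$. The crucial input is the joint continuity of $h$ in all five variables: from its definition $h(t,x,m,z,\gamma) = f(t,x,m,\gamma) + z\cdot \sigma^{-1}\bigl(\gamma + \bar{b}(t,x,m)\bigr)$, this follows from the joint continuity of $f$ and $\bar b$ assumed in Assumption~\ref{ass2}(i) (together with \Hypref{HypCont}). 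Passing to the limit $k\to\infty$ in the inequality above, using joint continuity on the left with $(t_{n_k},x_{n_k},m_{n_k},z_{n_k},\gamma_{n_k}) \to (t,x,m,z,\gamma_\ast)$ and on the right with the fixed test point $\gamma$, yields $h(t,x,m,z,\gamma_\ast) \le h(t,x,m,z,\gamma)$ for all $\gamma\in\Gamma$. Hence $\gamma_\ast \in A(t,x,m,z)$.

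Finally I would invoke Assumption~\ref{ass2}(ii): the set $A(t,x,m,z)$ is a singleton, so $\gamma_\ast = \hat{u}(t,x,m,z)$, which is precisely the unique limit we needed. This closes the subsequence argument and establishes joint continuity. I do not expect any serious obstacle here; the only point requiring care is the rhetorical structure of the ``every subsequential limit equals the same point, hence the whole sequence converges'' argument, and ensuring that the joint continuity of $h$ is correctly traced back to Assumption~\ref{ass2}(i) rather than merely to the weaker separate continuity in \Hypref{HypCont}. The argument is a standard closed-graph/maximum-theorem type of reasoning specialized to the singleton-argmin case, and no quantitative estimate is required.
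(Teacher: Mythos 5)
Your argument is correct and rests on exactly the same ingredients as the paper's proof: joint continuity of $h$ (from Assumption~\ref{ass2}(i) together with \Hypref{HypCont}), compactness of $\Gamma$ from \Hypref{HypActionSpace}, and the singleton property of $A(t,x,m,z)$ from Assumption~\ref{ass2}(ii). The only difference is presentational: the paper simply cites Berge's Maximum Theorem for upper hemicontinuity of the argmin correspondence, whereas you unpack that theorem's proof in the singleton case via the standard subsequence argument, which is a perfectly valid, self-contained version of the same reasoning.
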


\begin{proof}
Assumption \ref{ass2}(i) implies that the (pre-)Hamiltonian $h$ is (jointly) continuous. The continuity of $\hat{u}$ now follows from Berge's Maximum Theorem \citep[for instance,][Theorem 17.31]{aliprantisborder06}, Assumption~\ref{ass2}(ii) and the compactness of $\Gamma$ according to~\Hypref{HypActionSpace}.
\end{proof}

We will apply the fixed point theorem to a restriction of the mapping
\begin{equation} \label{Psi}
	\Psi\!: \mathcal P(\mathcal X) \ni \mu \mapsto \mathbf P^{\mu} \circ X^{-1} \in \mathcal P(\mathcal X)
\end{equation}
where $\mathbf P^{\mu} \doteq \mathbf P^{\mathfrak p^\mu, \hat {\mathrm u}^\mu}$ according to Subsection~\ref{SectExistOptControl} with $\mathfrak p^\mu$ and $\hat u^\mu$ defined as follows:
\begin{subequations}
\begin{align}
\label{def-p^mu}
	\mathfrak p^\mu (t, \cdot) &\doteq \mu \left( X(t) \in \cdot \mid \tau^X > t \right), && \\
\label{def-uhat^mu}
	\hat {\mathrm u}_t ^\mu &\doteq \mathrm u_t ^{\mathfrak p^\mu} = \hat{\mathrm u} \left(t,X(t), \int w(y) \mathfrak p^\mu (t,dy) , Z^{\mathfrak p^\mu}(t)\right), &t\in [0,T].&
\end{align}
\end{subequations}
Thanks to Assumption \ref{ass2}(ii), the optimal control process $\hat{\mathrm u}^\mu$ is unique $\mathrm{Leb}_{T}\otimes \Prb$-almost surely. Let $\mathbf E_\mu$ denote expectation with respect to $\mu$ (not $\mathbf P^\mu$!).

Now, we need to specify the subspace of $\prbms{\mathcal X}$ with the right properties for applying the fixed point theorem. Recall the definition of $\mathcal{Q}_{\nu,c}$ with $c \geq 0$ from Section~\ref{SectApproxNash}. Choose $K > 0$ such that $K \geq \|b\|_{\infty}$, and set
\[
	\mathcal{K} \doteq \mathcal{Q}_{\nu,K}.
\]
Then $\mathcal{K}$ is non-empty and convex, and it is compact with respect to the topology inherited from $\prbms{\mathcal X}$ (i.e., the topology of weak convergence of measures), see Lemma~\ref{LemmaAppRegularityCompact} in the Appendix. Moreover, $\mathcal K$ can be seen as a subset of the dual space $\mathbf C_b (\mathcal X)^*$, where $\mathbf C_b(\mathcal X)$ is the space of all continuous bounded functions on $\mathcal X$. The dual space $\mathbf C_b(\mathcal X)^*$, equipped with the weak* topology $\sigma(\mathbf C_b (\mathcal X)^*,  \mathbf C_b(\mathcal X))$, is a locally convex topological vector space, inducing the weak convergence on $\mathcal P(\mathcal X) \supset \mathcal K$.

If $\mu\in \mathcal{K}$, then the flow of conditional means induced by $\mu$ is continuous: 
\begin{lemma} \label{lemma:flowm}
	Let $\mu \in \mathcal K$. Then the mapping $t\mapsto m^{\mu}(t)\doteq \int w(y)\mathfrak{p}^{\mu}(t,dy)$ is continuous, where $\mathfrak p^\mu $ is the flow of conditional measures induced by $\mu$ according to \eqref{def-p^mu}.
\end{lemma}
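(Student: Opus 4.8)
The plan is to represent $m^{\mu}$ as a quotient on the canonical path space and to prove continuity of numerator and denominator separately, the whole argument resting on two facts about the reference measure $\Theta_{\nu}$: that $\mu$ is equivalent to it (Lemma~\ref{LemmaAppRegularity}) and that, under the non-degeneracy \hypref{HypAddND} and the $C^{2}$-boundary assumption \Hypref{HypStateSpace}, the exit time $\tau = \tau^{\hat{X}}$ carries no atoms. First I would realize everything on $\mathcal{X}$ under $\mu$. Since $\mathfrak{p}^{\mu}(t,\cdot) = \mu(\hat{X}(t)\in\cdot \mid \tau > t)$, for every $t$ with $d(t)\doteq\mu(\tau>t)>0$ one has $m^{\mu}(t) = n(t)/d(t)$, where $n(t)\doteq \mathbf{E}_{\mu}[w(\hat{X}(t))\mathbf{1}_{\{\tau>t\}}]$. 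Thus it suffices to show that $n$ and $d$ are continuous and that $d$ is bounded away from $0$.

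The key step is atomlessness of $\tau$. I would show $\Theta_{\nu}(\tau=t)=0$ for every $t\in[0,T]$ and transfer this to $\mu$ by equivalence. Continuity of paths and openness of $O$ give the inclusion $\{\tau=t\}\subseteq\{\hat{X}(t)\in\partial O\}$: if the first exit occurs exactly at $t$, the trajectory lies in $O$ on $[0,t)$ and meets $\mathbb{R}^{d}\setminus O$ at $t$, hence $\hat{X}(t)\in\partial O$. Under $\Theta_{\nu}$ the marginal $\hat{X}(t)=\xi+\sigma W(t)$ has a density for $t>0$ by \hypref{HypAddND} (non-degenerate Gaussian convolved with the law of $\xi$), while $\partial O$ is Lebesgue-null as a $C^{2}$ hypersurface by \Hypref{HypStateSpace}; for $t=0$ one has $\hat{X}(0)=\xi\in O$ almost surely, so $\tau>0$. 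Hence $\Theta_{\nu}(\tau=t)=0$, and since $\mu\sim\Theta_{\nu}$, also $\mu(\tau=t)=0$ for all $t$.

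For the lower bound on the denominator, I would use a small-ball estimate for the non-degenerate process $\sigma W$: conditionally on $\xi=x\in O$, the path stays inside a ball $\overline{B(x,r)}\subset O$ throughout $[0,T]$ with strictly positive probability, forcing $\tau>T$. Integrating over $\nu$ gives $\Theta_{\nu}(\tau>T)>0$, and by equivalence, together with monotonicity of $d$, one obtains $d(t)\geq d(T)=\mu(\tau>T)>0$ for every $t\in[0,T]$. This also disposes of the degenerate convention entirely, so the value $w(0)$ never enters.

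Granting atomlessness, continuity of $n$ and $d$ follows by dominated convergence: for $t_{n}\to t$ one has $\mathbf{1}_{\{\tau>t_{n}\}}\to\mathbf{1}_{\{\tau>t\}}$ on the $\mu$-full set $\{\tau\neq t\}$, while $w(\hat{X}(t_{n}))\to w(\hat{X}(t))$ by continuity of trajectories and of $w$ (\Hypref{HypCont}), everything dominated by $\|w\|_{\infty}$ (\Hypref{HypMeasBounded}). Hence $n(t_{n})\to n(t)$ and $d(t_{n})\to d(t)$, and since $d$ is continuous with $d\geq d(T)>0$, the quotient $m^{\mu}=n/d$ is continuous on $[0,T]$. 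The only delicate point is the atomlessness of the exit time, ruling out that the first exit concentrates at a fixed time; this is precisely where the smoothness of $\partial O$ and the non-degeneracy of $\sigma$ are used, the remainder being routine dominated convergence.
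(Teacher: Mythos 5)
Your proof is correct and follows essentially the same route as the paper's: write $m^{\mu}$ as the quotient of $\mathbf{E}_{\mu}[w(\hat{X}(t))\mathbf{1}_{\{\tau>t\}}]$ by $\mu(\tau>t)$, bound the denominator away from zero, observe that $\mu(\tau=t)=0$ so the indicator converges $\mu$-a.s., and conclude by dominated convergence. The only difference is that you re-derive the two supporting facts (the uniform lower bound on $\mu(\tau>T)$ and the atomlessness of $\tau$ via $\{\tau=t\}\subseteq\{\hat{X}(t)\in\partial O\}$ and the Gaussian marginal) which the paper delegates to Lemmata~\ref{LemmaAppRegularity} and~\ref{LemmaAppRegularity2} in the Appendix, and your derivations match those proofs.
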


\begin{proof}
By Lemma~\ref{LemmaAppRegularity}, $\inf_{t\in [0,T]} \mu (\tau^X > t) > 0$. Therefore and by construction, for every $t\in [0,T]$,
\[
	m^{\mu}(t) = \frac{1}{\mu (\tau^X > t)} \mathbf E_{\mu} \left[ w(X(t)) \mathbf 1_{[0,\tau^X)} (t)\right].
\]
Let $t\in [0,T]$, and let $(t_{n}) \subset [0,T]$ be such that $t_{n}\to t$ as $n\to \infty$. For $\omega\in \Omega$, the mapping $s\mapsto \mathbf{1}_{[0,\tau^{X}(\omega))}(s)$ is discontinuous only at $s = \tau^{X}(\omega)$. By part~\eqref{LemmaAppRegularity2StochCont} of Lemma~\ref{LemmaAppRegularity2}, $\mu\left( \tau^{X} = t \right) = 0$. Since $w$ is bounded and continuous and $X$ has continuous trajectories, we have with $\mu$-probability one,
\begin{align*}
	& \mathbf{1}_{[0,\tau^{X})}(t_{n}) \stackrel{n\to\infty}{\longrightarrow} \mathbf{1}_{[0,\tau^{X})}(t), & & w\left(X(t_{n})\right)\mathbf{1}_{[0,\tau^{X})}(t_{n}) \stackrel{n\to\infty}{\longrightarrow} w\left(X(t)\right)\mathbf{1}_{[0,\tau^{X})}(t). & 
\end{align*}
The dominated convergence theorem now implies $\lim_{n\to\infty} m(t_{n}) = m(t)$.
\end{proof}

In order to apply Theorem \ref{thm:fixed} above, we have to show that $\Psi\!: \mathcal K \to \mathcal K$ is (sequentially) continuous. Let $(\mu^n)$ be a sequence in $\mathcal K$ converging to some measure $\mu \equiv \mu^{\infty} \in \mathcal K$. We want to show that $\Psi(\mu^n) \to \Psi(\mu)$ in $\mathcal{K} \subset \prbms{\mathcal{X}}$. To ease the notation, we set
\begin{align*}
	& \mathfrak p^n \doteq \mathfrak p^{\mu^n},& & Z^n \doteq Z^{\mathfrak p^n},& & \mathrm u_t ^n \doteq \mathrm u_t ^{\mathfrak p^n},& & 1\le n \le \infty.&
\end{align*}
We proceed as in \citet[Proof of Theorem 3.5]{carmonalacker15} and show that
\begin{equation} \label{REconv}
	\mathcal H(\mathbf P^{\mu} \mid \mathbf P^{\mu^n}) \stackrel{n\to \infty}{\longrightarrow} 0,
\end{equation}
where $\mathcal H(\cdot \mid \cdot)$ denotes the relative entropy:
\[
	\mathcal H(\tilde{\mu} \mid \bar{\mu}) \doteq \begin{cases}
	\int \log \left(\frac{d\tilde{\mu}}{d\bar{\mu}}\right) d\tilde{\mu} &\text{if } \tilde{\mu} \text{ absolutely continuous w.r.t. } \bar{\mu} \\
	\infty &\text{otherwise}.
	\end{cases}
\]
Observe that \eqref{REconv} implies $\Psi(\mu^n) \to \Psi(\mu)$ in the topology of weak convergence as well as in the topology of convergence in total variation.

In our situation, denoting by $\mathcal{E}(\cdot)$ the stochastic exponential of a local martingale, we have 
\[
\begin{split}
	\frac{d\mathbf P^{\mu^n}}{d\mathbf P^{\mu}} = \mathcal E \Biggl( \int_{0}^{\cdot} &\sigma^{-1}\left(b(t,X(t), m^{n}(t) , \mathrm u^{n}_t) - b(t,X(t), m^\infty (t) , \mathrm u^{\infty}_t)\right) \mathbf{1}_{[0,\tau^{X})}(t)\\
	& dW^{\mathfrak p^\infty}(t) \Biggr)(T),
\end{split}
\]
where $m^{n}$ denotes the flow of means induced by $\mathfrak{p}^{n}$ according to \eqref{def-m-p}:
\[
	m^{n} (t)\doteq \int w(y) \mathfrak p^n (t,dy), \quad 1\le n\le \infty.
\]
Since $\sigma^{-1}b$ is bounded, we obtain
\begin{align*}
	&\mathcal H(\mathbf P^{\mu} \mid \mathbf P^{\mu^n}) = - \mathbf E^{\mathbf P^\mu} \left[ \log \frac{d\mathbf P^{\mu^n}}{d\mathbf P^{\mu}} \right] \\
	&= \frac{1}{2} \mathbf E^{\mathbf P^\mu} \left[  \int_0 ^{\tau} \left | \sigma^{-1}b \left (t,X(t), m^{n}(t) , \mathrm u^{n}_t \right) - \sigma^{-1}b \left (t,X(t), m^{\infty}(t) , \mathrm u^{\infty}_t \right) \right |^2 dt \right].
\end{align*}
By \Hypref{HypCont}, the map $b(t,x,\cdot,\cdot)$ is continuous for all $(t,x)$. Hence, we can conclude by an application of the bounded convergence theorem provided we show that
\begin{equation} \label{conv-m-u}
	( m^{n} , \mathrm u^{n}) \stackrel{n\to \infty}{\longrightarrow} ( m^{\infty}, \mathrm u^{\infty}) \text{ in }\mathrm{Leb}_{T} \otimes \mathbf P^{\mu}\text{-measure}.
\end{equation}

First, we show that 
\begin{equation} \label{conv-m}
	m^{n} \stackrel{n\to \infty}{\longrightarrow} m^{\infty} \text{ in }\mathrm{Leb}_{T}\text{-measure}.
\end{equation}
By Lemma~\ref{LemmaAppRegularity}, $\inf_{n\in \mathbb{N}\cup\{\infty\}} \inf_{t\in [0,T]} \mu^{n}\left(\tau^X >t\right) > 0$. Hence, by construction, for every $t\in [0,T]$,
\[
	m^{n} (t) = \frac{1}{\mu^n (\tau^X > t)} \mathbf E_{\mu^n} \left[ w(X(t)) \mathbf 1_{[0,\tau^X)} (t)\right],\quad n\in \mathbb{N}.
\]
The mapping $\mathbf{1}_{[0,\tau^{X})}(t)$ is $\mu$-almost surely continuous for each fixed $t$ by part~\eqref{LemmaAppRegularity2IndCont} of Lemma~\ref{LemmaAppRegularity2}. By hypothesis, $w$ is bounded and continuous. The mapping theorem \citep[Theorem~5.1 in][p.\,30]{billingsley68} therefore implies that
\[
	\mathbf E_{\mu^n} \left[ w(X(t)) \mathbf 1_{[0,\tau^X)} (t)\right] \to \mathbf E_{\mu} \left[ w(X(t)) \mathbf 1_{[0,\tau^X)} (t)\right],
\]
as well as $\mu^n (\tau^X >t) \to \mu(\tau^X >t)$. This gives that $m^{n} (t) \to m^{\infty} (t)$ for all $t$, hence $m^{n}  \to m^{\infty} $ pointwise and in $\mathrm{Leb}_{T}$-measure.

Next, let us check that
\begin{equation} \label{conv-Z}
	\lim_{n \to \infty} \mathbf E \left[ \int_0 ^T \left| Z^{n}(t) - Z^{\infty}(t)  \right|^2 dt \right] =0.
\end{equation}
To this end, we use a stability result for \mbox{BSDEs} with random terminal time as in \citet[Theorem 2.4]{briandhu98}. According to that result, in order to get the convergence \eqref{conv-Z}, it suffices to show that
\[
	\mathbf E\left[  \int_0 ^{\tau} | H(s,X(s), m^{n} (s), Z^\infty (s)) - H(s,X(s), m^{\infty} (s), Z^\infty (s))|^2 ds \right] \stackrel{n\to \infty}{\longrightarrow} 0.
\]
Since the functions $f$ and $b$ in the definition of the minimized Hamiltonian $H$ are bounded and the controls take values in a compact set, we easily have
\[
	\left| H(s,X(s), m^{n} (s), Z^\infty (s)) - H(s,X(s), m^{\infty} (s), Z^\infty (s))\right|^2 \leq c_1 + c_2 |Z^\infty (s) |^2,
\]
where $c_1$, $c_2$ are some positive constants. Moreover, $\mathbf E [\int_0 ^{\tau} |Z^\infty (t) |^2 dt] < \infty$ (cf. \citet[Theorem 3.4]{darlingpardoux97}). We can therefore apply the dominated convergence theorem to obtain \eqref{conv-Z}.
The convergence in $\mathrm{Leb}_{T} \times \mathbf P^\mu$-measure follows from the equivalence $\mathbf P^\mu \sim \mathbf P$.

In view of \eqref{conv-m}, in order to establish \eqref{conv-m-u}, it remains to show that $\mathrm u^{n}_t \to \mathrm u^{\infty}_t$ $\mathbf P^\mu$-almost surely for all $t\in [0,T]$. Notice that for all $n\in \mathbb{N}\cup \{\infty\}$
\[
	\hat {\mathrm u}_t ^{n} = \hat{\mathrm u}\left(t,X(t), m^{n} (t),Z^{n}(t)\right).
\]
By Lemma~\ref{lemma:uhat}, $\hat{\mathrm u}$ is jointly continuous in all its variables. Equations \eqref{conv-m} and \eqref{conv-Z} together imply that $(m^{n},Z^{n}) \to (m^{\infty},Z^{\infty})$ in $\mathrm{Leb}_{T} \times \mathbf P^\mu$-measure. It follows that $\hat {\mathrm u}^{n} \to \hat {\mathrm u}^{\infty}$ in $\mathrm{Leb}_{T} \times \mathbf P^\mu$-measure as well.

Now, we can state and prove the main result of this section.

\begin{thrm} \label{nash-existence}
	Under \Hyprefall, \hypref{HypAddND}, and Assumptions~\ref{ass2}, there exists a feedback solution of the mean field game.
\end{thrm}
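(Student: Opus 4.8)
The plan is to obtain a solution of the mean field game as a fixed point of the map $\Psi$ from \eqref{Psi} and then to read off the three defining properties of Definition~\ref{DefMFGSolution}. I would apply the Brouwer-Schauder-Tychonoff theorem (Theorem~\ref{thm:fixed}) to the restriction $\Psi\colon \mathcal{K} \to \mathcal{K}$ with $\mathcal{K} = \mathcal{Q}_{\nu,K}$. Two of the three hypotheses are already in hand: $\mathcal{K}$ is a nonempty, convex, weak*-compact subset of the locally convex space $\mathbf{C}_b(\mathcal{X})^*$ by Lemma~\ref{LemmaAppRegularityCompact}, and the (sequential) continuity of $\Psi$ on $\mathcal{K}$ follows from the relative entropy convergence \eqref{REconv} together with \eqref{conv-m}, \eqref{conv-Z} and \eqref{conv-m-u}. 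The remaining point to check is that $\Psi$ is a self-map of $\mathcal{K}$: for $\mu \in \mathcal{K}$, under $\mathbf{P}^{\mu}$ the canonical process $X$ solves a stochastic equation with drift $b(t,X(t),m^{\mu}(t),\hat{\mathrm u}^{\mu}_t)$, which is bounded by $\|b\|_{\infty}\le K$; hence $\mathbf{P}^{\mu}\circ X^{-1}$ is the law of a process of the form \eqref{EqBoundedControlDynamics} with progressively measurable drift of norm at most $K$, so it lies in $\mathcal{Q}_{\nu,K}=\mathcal{K}$. Theorem~\ref{thm:fixed} then yields a fixed point $\mu_{\ast}\in\mathcal{K}$, that is, $\mathbf{P}^{\mu_{\ast}}\circ X^{-1}=\mu_{\ast}$.

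Next I would assemble the candidate solution. Set $\mathfrak{p}\doteq\mathfrak{p}^{\mu_{\ast}}$ as in \eqref{def-p^mu} and take $u\in\mathcal{U}_{fb}$ to be a progressive functional representing the optimal open-loop control $\hat{\mathrm u}^{\mu_{\ast}}$ of \eqref{def-uhat^mu}, which is possible by the identification of $\mathcal{U}$ with $\mathcal{U}_{fb}=\mathcal{U}_1$ recorded in Subsection~\ref{SectExistOptControl} and justified by Proposition~\ref{PropLimitExistUnique}. Property (i) of Definition~\ref{DefMFGSolution} is then immediate, with $\mathfrak{p}\in\mathcal{M}$ following from the measurability of the conditional law. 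For the optimality property (ii), I would invoke Subsection~\ref{SectExistOptControl}: the control \eqref{opt-control} attains $V^{\mathfrak{p}}=J^{\mathfrak{p}}(\mathrm u^{\mathfrak{p}})$, and since $\mu_{\ast}\in\mathcal{K}$ the flow of means $m^{\mu_{\ast}}$ is continuous by Lemma~\ref{lemma:flowm}, so that $V^{\mathfrak{p}}=V(\nu;\mathfrak{p})$ by Remark~\ref{RemValueFnct}; translating back to the feedback functional $u$ gives $J(\nu,u;\mathfrak{p})=V(\nu;\mathfrak{p})$.

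The crux, and the only place where the fixed point property is genuinely used, is the conditional mean field property (iii). If $((\Omega,\mathcal{F},(\mathcal{F}_t),\mathbf{P}),W,X)$ solves \eqref{EqLimitDynamics} with flow $\mathfrak{p}$, strategy $u$ and initial law $\nu$, then uniqueness in law (Proposition~\ref{PropLimitExistUnique}) forces its law to equal $\mathbf{P}^{\mu_{\ast}}\circ X^{-1}$, which is $\mu_{\ast}$ by the fixed point identity. Hence, for every $t$ with $\mathbf{P}(\tau^X>t)>0$,
\[
	\mathbf{P}\bigl(X(t)\in\cdot\mid\tau^X>t\bigr)=\mu_{\ast}\bigl(\hat{X}(t)\in\cdot\mid\tau^{\hat{X}}>t\bigr)=\mathfrak{p}^{\mu_{\ast}}(t)=\mathfrak{p}(t),
\]
which is exactly (iii). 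I do not expect a genuine analytic obstacle in the theorem itself, since the hard work---compactness of $\mathcal{K}$, continuity of $\Psi$ via relative entropy, the \mbox{BSDE} verification of optimality, and the continuity of $\hat{\mathrm u}$ and of the flow of means---has already been carried out; the main care lies in verifying the self-map property $\Psi(\mathcal{K})\subseteq\mathcal{K}$ and in the bookkeeping that converts the fixed point $\mu_{\ast}$ into a bona fide feedback solution.
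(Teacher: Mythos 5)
Your proposal is correct and follows essentially the same route as the paper: apply the Brouwer--Schauder--Tychonoff theorem to $\Psi$ on $\mathcal{K}=\mathcal{Q}_{\nu,K}$ (using Lemma~\ref{LemmaAppRegularityCompact}, the relative-entropy continuity argument, and the boundedness of the drift for the self-map property), then convert the fixed point into a feedback solution via the \mbox{BSDE} optimality of Subsection~\ref{SectExistOptControl}, Lemma~\ref{lemma:flowm} with Remark~\ref{RemValueFnct} for property (ii), and uniqueness in law plus the fixed-point identity for property (iii). Your explicit verifications of the self-map property and of the conditional mean field property merely spell out steps the paper leaves implicit.
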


\begin{proof}
In view of the discussion above, we can apply Theorem~\ref{thm:fixed}
(the Brouwer-Schauder-Tychonoff fixed point theorem) to the map $\Psi\!: \mathcal K \to \mathcal K$. Thus, there exists $\mu\in \mathcal{K}$ such that $\Psi(\mu) = \mu$. Let $\mathfrak{p}\doteq \mathfrak{p}^{\mu}$ be the flow of conditional measures induced by the fixed point $\mu$ according to \eqref{def-p^mu}. Let $m_{\mathfrak{p}}$ be the flow of (conditional) means induced by $\mathfrak{p}$ according to \eqref{def-m-p}, and let $(Y^{\mathfrak{p}}, Z^{\mathfrak{p}})$ be the unique solution of the \mbox{BSDE} \eqref{bsde-H}. Notice that $Z^{\mathfrak{p}}$ is progressively measurable with respect to $(\mathcal{F}_{t})$, the $\mathbf{P}$-augmentation of the filtration generated by the canonical process $X = \hat{X}$. Hence we can find $u\in \mathcal{U}_{1}$ such that
\[
	u(\cdot,X) = \hat{u}\left(\cdot, X(\cdot), m_{\mathfrak{p}}(\cdot), Z^{\mathfrak{p}}(\cdot)\right)\quad \mathrm{Leb}_{T}\otimes \mathbf P\text{-almost surely}.
\]
Recalling the discussion of Subsection~\ref{SectExistOptControl} and the fact that in our situation $\mathcal{U}_{1} = \mathcal{U}_{fb}$, we see that 
\[
	J(\nu,u;\mathfrak{p}) = V^{\mathfrak{p}}.
\]
The flow of means $m_{\mathfrak{p}}$ is continuous thanks to Lemma~\ref{lemma:flowm}. This implies, by Remark~\ref{RemValueFnct}, that $V^{\mathfrak{p}} = V(\nu;\mathfrak{p})$. The triple $(\nu,u,\mathfrak{p})$ therefore satisfies the optimality property of Definition~\ref{DefMFGSolution}. By construction of $\Psi$ and the fixed point property of $\mu$, $(\nu,u,\mathfrak{p})$ also satisfies the conditional mean field property of Definition~\ref{DefMFGSolution}. It follows that $(\nu,u,\mathfrak{p})$ is a feedback solution of the mean field game.
\end{proof}

\subsection{Continuity of optimal controls} \label{SectExistContinuity}

Here, we work under all the hypotheses of the previous subsections plus a couple of additional assumptions to be introduced below. Let $\mathfrak p \in \mathcal M$ be a flow of measures such that the induced flow of means $m_{\mathfrak p}$ is continuous, that is, we assume that the mapping
\[
	t\mapsto m_{\mathfrak p}(t)\doteq \int_{\mathbb{R}^{d}} w(y) \mathfrak p(t,dy)
\]
is continuous. From Lemma~\ref{lemma:flowm} we know that this is the case if $\mathfrak{p}$ is the flow of measures induced by some $\mu \in \mathcal{K}$.

The aim here is to show that $\hat{\mathrm u}(\cdot,X(\cdot),m_{\mathfrak{p}}(\cdot),Z^{\mathfrak{p}}(\cdot))$, the optimal control process appearing in \eqref{opt-control}, corresponds to a Markovian feedback strategy that is continuous both in its time and its state variable. In view of Lemma~\ref{lemma:uhat} and the continuity of $m_{\mathfrak{p}}$, it suffices to show that $Z^{\mathfrak{p}}(t)$ can be expressed as a continuous function of time $t$ and the forward state variable $X(t)$.

Set, for $(t,x,z)\in [0,T]\times \mathbb{R}^{d}\times \mathbb{R}^{d\times d}$,
\[
	\tilde h(t,x,z) \doteq h(t,x,m(t),z,\hat{\mathrm u}(t,x,m_{\mathfrak{p}}(t),z)).
\]
Notice that $\tilde{h}(t,x,z) = H(t,x,m_{\mathfrak{p}}(t),z)$. Moreover, by Assumptions~\ref{ass2}, Lemma~\ref{lemma:uhat}, and the continuity of $m_{\mathfrak{p}}$, we have that $\tilde{h}$ is continuous (jointly in all its variables). By~\Hypref{HypMeasBounded} and \Hypref{HypActionSpace}, $\tilde{h}(\cdot,\cdot,z)$ is bounded for each $z\in \mathbb{R}^{d\times d}$.

Consider the following \mbox{BSDE} under $\mathbf P$:
\begin{equation} \label{bsde-P}
	Y(t) = F(\tau , X(\tau)) + \int_{t}^{T} \tilde h\left(s,X(s), Z(s) \right) \mathbf 1_{(s < \tau)} ds - \int_{t} ^{T} Z(s) dW(s),
\end{equation}
with the same convention regarding times exceeding $\tau$ as in Subsection~\ref{SectExistOptControl}. The \mbox{BSDE} above possesses a unique $L^2$-solution $(Y,Z)$, which coincides with the unique solution $(Y^{\mathfrak{p}},Z^{\mathfrak{p}})$ of the \mbox{BSDE} \eqref{bsde-H}. To prove the continuity property, we adopt the following procedure: first, we consider the \mbox{PDE} corresponding to the \mbox{BSDE} \eqref{bsde-P} and prove some regularity implying, in particular, the continuity of the gradient of the solution; second, we provide a verification argument essentially identifying the solution of the \mbox{PDE} and its gradient with the unique solution $(Y,Z)$ of \eqref{bsde-P}. This is performed in the proof of the next proposition. Before that, we have to introduce some notation on Sobolev and H\"older-type norms.

Set $D\doteq [0,T) \times O$, and let $\partial_p D \doteq ([0,T)\times \partial O) \cup (\{T\} \times \bar O)$ be the parabolic boundary of $D$. Moreover, let $\mathcal A$ denote the (parabolic) Dynkin operator associated with $X$ under $\mathbf P$, that is,
\[
	\mathcal A \doteq \partial_t + \frac{1}{2}\textrm{Tr}(a\partial^2 _{xx}),
\]
where $a\doteq \sigma \trans{\sigma}$. Let $L^\lambda(D)$ denote the space of $\lambda$-th power integrable functions on $D$, with $\| \cdot \|_{\lambda, D}$ norm in $L^\lambda (D)$. For $1 < \lambda < \infty$, let $\mathcal H^\lambda(D)$ denote the space of functions $\psi$ that together with their generalized partial derivatives $\partial_t \psi$, $\partial_{x_i} \psi$, $\partial_{x_i x_j}\psi$, $i,j=1,\ldots,n$, are in $L^\lambda (D)$. In $\mathcal H^\lambda (D)$ let us introduce the norm (of Sobolev type)
\begin{equation}
	\| \psi \|^{(2)} _{\lambda, D} \doteq \|\psi\|_{\lambda, D} + \|\partial_t \psi\|_{\lambda, D} + \sum_{i=1}^d \|\partial_{x_i}\psi\|_{\lambda, D} + \sum_{i,j=1}^d \|\partial_{x_i x_j} \psi\|_{\lambda, D}.
\end{equation}
Let us also consider H\"older-type norms as follows. For $0< \alpha \leq 1$, let
\begin{align*}
	\| \psi \|_D &\doteq \sup_{(t,x) \in D} | \psi(t,x) | , \\
	| \psi |^{\alpha} _D &\doteq \| \psi \|_{D} +\hspace{-1ex} \sup_{x,y \in \bar O, t \in [0,T]}\hspace{-1ex} \frac{| \psi(t,x)-\psi(t,y)|}{|x-y|^\alpha} +\hspace{-1ex} \sup_{s,t \in [0,T], x \in \bar O}\hspace{-1ex} \frac{| \psi(s,x)-\psi(t,x)|}{|s-t|^{\alpha/2}}.
\end{align*}
Finally, we let
\[
	| \psi | ^{1+\alpha} _D \doteq | \psi |^\alpha _D + \sum_{i=1}^d | \partial_{x_i} \psi |^{\alpha}_D .
\]

\begin{ass}\label{ass:F}
\
\begin{enumerate}
	\item[(i)] $\bar{b}$, $f$ belong to $\mathbf C^{0,1}([0,T]\times (\bar{O} \times \mathbb R^{d_{0}}))$ and $\mathbf C^{0,1}([0,T]\times (\bar{O} \times \mathbb R^{d_{0}} \times \Gamma))$, respectively. Thus, $b\in \mathbf C^{0,1}([0,T]\times (\bar{O} \times \mathbb R^{d_{0}} \times \Gamma))$.

	\item[(ii)] $F(T,\cdot) \in \mathbf C^2 (\bar O)$ and $F(t,x) = \tilde F(t,x)$ for all $t\in [0,T]$ and $x \in \partial O$, where $\tilde F \in \mathbf C^{1,2}(\bar D)$. Moreover, for some $\lambda > \frac{d+2}{2}$,
\[ 
	\| F \|^{(2)} _{\lambda, \partial_p D} < \infty.
\]
\end{enumerate}
\end{ass} 

Assumption~\ref{ass:F}(ii) on the terminal costs $F$ corresponds to assumption (E7) in \citet[Appendix E]{flemingrishel75}.

\begin{prop} \label{prop:cont}
Grant \Hyprefall, \hypref{HypAddND} as well as Assumptions \ref{ass2} and \ref{ass:F}. The optimal control is then given by
\[
	\mathrm u^{\mathfrak{p}} _t = \hat{\mathrm u}(t, X(t) ,m_{\mathfrak{p}}(t), \partial_x \varphi(t,X(t)) \sigma), \quad t\in [0,\tau],
\] 
where the function $\varphi$ is the unique solution in $\mathcal H^\lambda (D)$ of the Cauchy problem
\begin{align}\label{pde}
	& -\mathcal A \varphi - \tilde h(\cdot, \varphi, \partial_x \varphi \sigma) =0 \text{ on }D,& & \varphi=F \text{ on }\partial_p O,&
\end{align}
and it satisfies 
\begin{equation}\label{estimate-pde1}
\| \varphi \|^{(2)} _{\lambda, D} \leq M\left( \| f \|_{\lambda, D} + \| F\|^{(2)} _{\lambda, \partial_p D} \right),
\end{equation}
with constant $M$ depending only on bounds on $b$ and $\sigma$ on $D$.
Moreover,
\begin{equation}\label{estimate-pde2}
	| \varphi |_D ^{1+\alpha} \le M^\prime \| \varphi \|^{(2)} _{\lambda, D}, \quad \alpha = 1 - \frac{d+2}{\lambda},
\end{equation}
for some constant $M^\prime$ depending on $D$ and $\lambda$.

As a consequence, the function $(t,x) \mapsto \hat{\mathrm u}(t,x,m(t),\partial_x \varphi (t,x)\sigma)$ is continuous over $[0,T] \times \bar O$.
\end{prop}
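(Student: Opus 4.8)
The plan is to split the argument into a deterministic PDE step and a probabilistic verification step, exactly as announced in the text preceding the statement. For the PDE step I would first record the structural properties of the nonlinearity. By Assumption~\ref{ass2}(i), Lemma~\ref{lemma:uhat}, and continuity of $m_{\mathfrak{p}}$, the map $\tilde h(t,x,z) = H(t,x,m_{\mathfrak{p}}(t),z)$ is jointly continuous; by \Hypref{HypMeasBounded} and \Hypref{HypActionSpace} it is bounded in $(t,x)$ for fixed $z$; and, being the infimum over the compact set $\Gamma$ of maps affine in $z$ with coefficient $\sigma^{-1}b$ bounded, it is globally Lipschitz in $z$ and independent of $\varphi$. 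Since $\sigma$ has full rank by \hypref{HypAddND}, the matrix $a=\sigma\trans{\sigma}$ is uniformly elliptic and $\mathcal A$ is uniformly parabolic on $D$, while $\partial O$ is a $C^2$-manifold by \Hypref{HypStateSpace}. The terminal-boundary problem \eqref{pde} then falls within the scope of the regularity theory of \citet[Appendix E]{flemingrishel75}: freezing the gradient term as a source, the linear $L^\lambda$-estimate furnishes \eqref{estimate-pde1}, and a fixed-point (continuation) argument exploiting the Lipschitz dependence of $\tilde h$ on $z$ produces a solution $\varphi\in\mathcal H^\lambda(D)$. Uniqueness in $\mathcal H^\lambda(D)$ follows because the difference $w$ of two solutions solves a linear parabolic equation $\mathcal A w + B\cdot\partial_x w = 0$ with bounded coefficient $B$ (coming from the Lipschitz bound on $\tilde h$) and vanishing data on $\partial_p D$, whence $w\equiv 0$ by the maximum principle. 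Finally, the parabolic Sobolev embedding $\mathcal H^\lambda(D)\hookrightarrow C^{1+\alpha}$, valid under the exponent condition on $\lambda$ in Assumption~\ref{ass:F}(ii), yields \eqref{estimate-pde2} with $\alpha = 1 - \frac{d+2}{\lambda}$ and, in particular, the continuity of $\partial_x\varphi$ on $\bar D$.

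For the verification step, with $\varphi\in\mathcal H^\lambda(D)$ and $\partial_x\varphi$ continuous, I would apply the It{\^o}-Krylov formula for Sobolev functions --- legitimate because $X$ is a non-degenerate diffusion (again by \hypref{HypAddND}) and $\lambda$ is large --- to $\varphi(\cdot\wedge\tau,X(\cdot\wedge\tau))$ under $\mathbf P$. Setting $Y(t)\doteq\varphi(t\wedge\tau,X(t\wedge\tau))$ and $Z(t)\doteq\partial_x\varphi(t,X(t))\sigma\,\mathbf 1_{(t<\tau)}$, and using the equation $\mathcal A\varphi = -\tilde h(\cdot,\varphi,\partial_x\varphi\,\sigma) = -H(\cdot,m_{\mathfrak{p}},\partial_x\varphi\,\sigma)$ on $D$ together with the identity $\varphi=F$ on $\partial_p D$, I would check that $(Y,Z)$ solves the \mbox{BSDE} \eqref{bsde-P}, which is the same as \eqref{bsde-H}. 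The uniqueness part of \citet[Theorem~3.4]{darlingpardoux97} then gives $(Y,Z)=(Y^{\mathfrak p},Z^{\mathfrak p})$, so that $Z^{\mathfrak p}(t)=\partial_x\varphi(t,X(t))\sigma$ for $\mathrm{Leb}_T\otimes\mathbf P$-almost every $(t,\omega)$ with $t<\tau$. Substituting into \eqref{opt-control} yields the stated feedback representation of $\mathrm u^{\mathfrak p}$.

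The concluding continuity assertion is then immediate: $\hat{\mathrm u}$ is jointly continuous by Lemma~\ref{lemma:uhat}, $m_{\mathfrak{p}}$ is continuous by hypothesis (cf.\ Lemma~\ref{lemma:flowm}), and $(t,x)\mapsto\partial_x\varphi(t,x)$ is continuous by \eqref{estimate-pde2}, so the composition $(t,x)\mapsto\hat{\mathrm u}(t,x,m(t),\partial_x\varphi(t,x)\sigma)$ is continuous on $[0,T]\times\bar O$. I expect the principal difficulty to lie in the PDE step: transferring the interior-and-boundary $L^\lambda$-estimates of \citet{flemingrishel75} to the present problem, in particular accommodating the parabolic boundary $\partial_p D=([0,T)\times\partial O)\cup(\{T\}\times\bar O)$ under only $C^2$-regularity of $\partial O$ and closing the fixed-point scheme for the gradient-nonlinear term. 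A secondary technical point is the justification of the It{\^o} formula for the merely $\mathcal H^\lambda$ (rather than classical $C^{1,2}$) function $\varphi$, for which the non-degeneracy \hypref{HypAddND} is essential.
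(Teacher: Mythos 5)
Your proposal follows the same two-step route as the paper: obtain $\varphi\in\mathcal H^{\lambda}(D)$ together with the estimates \eqref{estimate-pde1}--\eqref{estimate-pde2} from the quasilinear parabolic theory of Fleming--Rishel and Lady\v{z}enskaja--Solonnikov--Ural'ceva, then identify $(\varphi(t,X(t)),\partial_x\varphi(t,X(t))\sigma)$ with the unique $L^2$ solution of the \mbox{BSDE} \eqref{bsde-P} via Krylov's generalized It{\^o} formula and read off the feedback form of the optimal control from \eqref{opt-control}. Two points in your PDE step deserve more care. First, the hypotheses of Fleming--Rishel's Theorem VI.6.1 are not literally satisfied here: their condition (6.3)(b) requires $C^1$ dependence on $t$, whereas under Assumption~\ref{ass:F}(i) the coefficients $b$ and $f$ are only continuous in $t$. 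The paper addresses this explicitly by inspecting Fleming--Rishel's approximation argument and extracting a non-increasing sequence $(\varphi_n)$ converging to $\varphi$ with bounds uniform in $n$. Your ``freeze the gradient and iterate'' scheme would plausibly sidestep this issue, since the linear $L^{\lambda}$ estimates need only bounded measurable lower-order terms, but as written you simply assert applicability of the cited theory without noticing the mismatch. Second, the embedding of $\mathcal H^{\lambda}(D)$ into the H\"older class controlled by $|\cdot|^{1+\alpha}_D$ with $\alpha = 1-\frac{d+2}{\lambda}$ requires $\lambda > d+2$ (as the paper states when invoking it), not merely $\lambda > \frac{d+2}{2}$ as in Assumption~\ref{ass:F}(ii); attributing \eqref{estimate-pde2} to the latter condition is incorrect, since $\alpha$ would then be allowed to be negative and the continuity of $\partial_x\varphi$ --- which the whole verification step and the final continuity claim rest on --- would not follow.
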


\begin{proof}
First, consider the Cauchy problem \eqref{pde}. The existence and uniqueness of a solution in $\mathcal H^\lambda (D)$ satisfying the estimate \eqref{estimate-pde1} can be found in \citet[Appendix E, Proof of Theorem VI.6.1, pp. 208-209]{flemingrishel75}. Observe that our assumptions together with the continuity of $\tilde h$ imply the hypotheses (6.1)--(6.3) in \citet[p.\,167]{flemingrishel75}, with the exception of (6.3)(b). In fact, in our setting the drift $b$ and the running cost $f$ are only continuous in $t$ and not necessarily $\mathbf C^1$; cf.\ Assumption~\ref{ass:F}(i). However, a careful inspection of the proof by Fleming and Rishel shows that we can apply their approximation argument excluding the last sentence. Hence, there exists a non-increasing sequence $(\varphi_n)$ converging pointwise to the unique solution $\varphi \in \mathcal H^\lambda (D)$ of the Cauchy problem \eqref{pde}. Moreover, each $\varphi_n$ satisfies the inequality \eqref{estimate-pde1}.
Using \citet[p.\,80, p.\,342]{lady-solo-ural}, the estimate (\ref{estimate-pde1}) gives (\ref{estimate-pde2}) if $\lambda > d+2$, and in this case $\alpha = 1-\frac{d+2}{\lambda}$ where the constant $M^\prime$ depends on $D$ and $\lambda$.
Notice that the upper bounds in \eqref{estimate-pde1} and \eqref{estimate-pde2} are both uniform in $n$. Therefore, letting $n \to \infty$, we have that the solution $\varphi$ satisfies the same bounds. In particular, \eqref{estimate-pde2} yields that the gradient $\partial_x \varphi$ is well-defined in the classical sense and it is continuous on $D$. 

To complete the proof, it suffices to show that the pair of processes $(\varphi(t,X(t)), \partial_x \varphi(t,X(t)) \sigma)_{t\in [0,\tau]}$ coincides with the unique $L^2$ solution of the \mbox{BSDE} \eqref{bsde-P}. We use a verification argument based on a generalization of It\^o's formula as in \citet[Theorem 1, Sect.\,2.10]{krylov80}. Applying that formula between $t$ and $\tau$, on the random set $\{t \leq \tau\}$, we have with $\mathbf P$-probability one
\[
\begin{split}
	\varphi(\tau,X(\tau)) = \varphi(t,X(t)) &+ \int_t ^\tau \mathcal A \varphi(s,X(s)) ds \\
	& + \int_t ^\tau \partial_x \varphi(s,X(s))\sigma dW(s).
\end{split}
\]
Being $\varphi$ a solution in $\mathcal H^\lambda (D)$ of the Cauchy problem \eqref{pde}, we have (by re-arranging the terms)
\[
\begin{split}
	\varphi(t,X(t)) = F(\tau, X(\tau)) &- \int_t ^\tau \tilde h (s,X(s), \partial_x \varphi(s,X(s))\sigma) ds \\
	&- \int_t ^\tau \partial_x \varphi(s,X(s))\sigma dW(s),
\end{split}
\]
which gives, by uniqueness of the solution $(Y,Z)$ of the BSDE \eqref{bsde-P}, that
\begin{align*}
	Y(t) = \varphi(t,X(t)), \quad Z(t) = \partial_x \varphi (t,X(t)) \sigma,
\end{align*}
on the event $\{t \leq \tau\}$ for all $t \in [0,T]$.
\end{proof}

Proposition~\ref{prop:cont} and Theorem~\ref{nash-existence} yield (together with Lemma~\ref{lemma:flowm}) the following corollary. Observe that the control actions can be chosen in an arbitrary way when the state process is outside $\cl (O)$.

\begin{crll} \label{crll:nash-cont}
	Grant \Hyprefall, \hypref{HypAddND} as well as Assumptions \ref{ass2} and \ref{ass:F}. Then there exist a feedback solution of the mean field game $(\nu,u,\mathfrak{p})$ and a continuous function $\tilde{u}\!: [0,T]\times \mathbb{R}^{d} \rightarrow \Gamma$ such that
	\[
		u(t,\phi) = \tilde{u}(t,\phi(t)) \text{ for all } (t,\phi)\in [0,T]\times \mathcal{X}.
	\]
	In particular, $u$ is continuous on $[0,T]\times \mathcal{X}$.
\end{crll}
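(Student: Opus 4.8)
The plan is to read off the statement from the two substantial results already proved in this section: Theorem~\ref{nash-existence} produces a feedback solution, and Proposition~\ref{prop:cont} shows its optimal feedback is a continuous function of time and the current state. The only genuinely new work is to extend that feedback from $[0,T]\times\bar O$ to all of $[0,T]\times\mathbb R^d$ and to check that the continuous Markovian representative still yields a solution in the sense of Definition~\ref{DefMFGSolution}. First I would invoke Theorem~\ref{nash-existence}: under \Hyprefall, \hypref{HypAddND}, and Assumptions~\ref{ass2} there is a fixed point $\mu\in\mathcal K=\mathcal Q_{\nu,K}$ of $\Psi$, and the induced conditional flow $\mathfrak p\doteq\mathfrak p^{\mu}$ together with a suitable $u_{0}\in\mathcal U_{1}=\mathcal U_{fb}$ forms a feedback solution $(\nu,u_{0},\mathfrak p)$. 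Since $\mu\in\mathcal K$, Lemma~\ref{lemma:flowm} guarantees that $m_{\mathfrak p}$ is continuous, which is exactly the standing hypothesis needed to run Proposition~\ref{prop:cont}.

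Next I would apply Proposition~\ref{prop:cont} to this $\mathfrak p$. It yields the solution $\varphi\in\mathcal H^{\lambda}(D)$ of the Cauchy problem \eqref{pde} with continuous spatial gradient $\partial_x\varphi$, the pathwise identification $Z^{\mathfrak p}(t)=\partial_x\varphi(t,X(t))\sigma$ on $\{t\le\tau\}$, and the continuity on $[0,T]\times\bar O$ of the map $u_{\ast}(t,x)\doteq\hat{\mathrm u}(t,x,m_{\mathfrak p}(t),\partial_x\varphi(t,x)\sigma)$. This $u_{\ast}$ is the desired Markovian feedback, but it is so far defined only on $[0,T]\times\bar O$. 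I would extend it to a continuous $\Gamma$-valued function $\tilde u$ on $[0,T]\times\mathbb R^{d}$: as $\bar O$ is compact, $[0,T]\times\bar O$ is closed, so Tietze's theorem extends each coordinate of $u_{\ast}$ to a continuous $\mathbb R^{d}$-valued map, and composing with the nearest-point projection onto the compact convex set $\Gamma$ (which is continuous and the identity on $\Gamma$, by \Hypref{HypActionSpace}) produces a continuous $\Gamma$-valued $\tilde u$ agreeing with $u_{\ast}$ on $[0,T]\times\bar O$; the values outside $\bar O$ are immaterial by the remark preceding the statement. Setting $u(t,\phi)\doteq\tilde u(t,\phi(t))$ then defines a continuous, hence progressively measurable, element of $\mathcal U_{1}=\mathcal U_{fb}$.

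Finally I would verify that $(\nu,u,\mathfrak p)$ is again a feedback solution. Along the canonical process the identification of Proposition~\ref{prop:cont} gives $u(s,X)=\tilde u(s,X(s))=\hat{\mathrm u}(s,X(s),m_{\mathfrak p}(s),Z^{\mathfrak p}(s))=\mathrm u^{\mathfrak p}_{s}$ for Lebesgue-almost every $s<\tau$, $\mathbf P$-almost surely, so $u(\cdot,X)$ and $u_{0}(\cdot,X)$ coincide $\mathrm{Leb}_{T}\otimes\mathbf P$-almost surely on $[0,\tau)$. Since both the running cost and the absorption functional depend on the control only through its values on $[0,\tau)$, this gives $J(\nu,u;\mathfrak p)=J^{\mathfrak p}(\mathrm u^{\mathfrak p})=V^{\mathfrak p}$, which equals $V(\nu;\mathfrak p)$ by the continuity of $m_{\mathfrak p}$ and Remark~\ref{RemValueFnct}; hence the optimality property holds. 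For the conditional mean field property I would use weak uniqueness under \hypref{HypAddND} (Proposition~\ref{PropLimitExistUnique}): the absorbed process driven by $u$ has the same law as that driven by $u_{0}$, namely $\theta_{\ast}=\mu$, so the fixed-point identity $\Psi(\mu)=\mu$ forces $\mathfrak p(t)=\mathbf P(X(t)\in\cdot\mid\tau^{X}>t)$ whenever $\mathbf P(\tau^{X}>t)>0$. Continuity of $u$ on $[0,T]\times\mathcal X$ is then immediate from that of $\tilde u$ composed with the evaluation map $\phi\mapsto\phi(t)$.

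The step I expect to be the main obstacle is this last verification, that is, confirming that replacing the merely $\mathrm{Leb}_{T}\otimes\mathbf P$-almost everywhere defined optimal control by its continuous Markovian representative $\tilde u$ changes neither the value nor the conditional law of the absorbed process. The crux is that both quantities are insensitive to the control outside $[0,\tau)$ and to modifications on null sets, which is what lets me pass from the $u_{0}$ of Theorem~\ref{nash-existence} to the structured feedback $u$; the extension via Tietze and projection, by contrast, is entirely routine.
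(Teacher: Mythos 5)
Your argument is correct and follows the same route as the paper, which derives the corollary directly from Theorem~\ref{nash-existence}, Proposition~\ref{prop:cont} and Lemma~\ref{lemma:flowm}, noting only that the control may be chosen arbitrarily outside $\cl(O)$. You merely make explicit the details the paper leaves implicit (the Tietze extension plus projection onto $\Gamma$, and the check that modifying the control on null sets and off $[0,\tau)$ affects neither the cost nor the conditional law of the absorbed process), and these are handled correctly.
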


\bigskip
We end this section by sketching the \mbox{PDE} approach to mean field games with absorption. For simplicity, we only consider a setting analogous to that of \citet{lasrylions06b}, where the dynamics are of calculus-of-variation-type with additive noise and the running costs split into two parts, one depending on the control, the other on the measure variable. Thus, we assume that
\begin{align*}
	&\sigma \equiv \sigma \Id_{d}, & & \bar{b}(t,x,y) \equiv 0,& & f(t,x,y,\gamma) = f_{0}(t,x,\gamma) + f_{1}(t,x,y) &
\end{align*}
for some scalar constant $\sigma > 0$ and suitable functions $f_{0}\!: [0,T]\times \mathbb{R}^{d}\times \Gamma \rightarrow \mathbb{R}$ and $f_{1}\!: [0,T]\times \mathbb{R}^{d}\times \mathbb{R}^{d_{0}} \rightarrow \mathbb{R}$. Let us also assume that the initial distribution $\nu$ is absolutely continuous with respect to Lebesgue measure with density $m_{0}$.

Let $(\nu,u,\mathfrak{p})$ be a solution according to Corollary~\ref{crll:nash-cont}, and let $\tilde{u}$ be the associated Markov feedback strategy.

Set $H(t,x,z) \doteq \max_{\gamma\in \Gamma} \left\{ -\gamma\cdot z - f_{0}(t,x,\gamma)\right\}$. Let $V$ be the unique solution of the Hamilton-Jacobi Bellman equation
\begin{equation} \label{EqPDEappHJB}
	-\partial_{t} V - \frac{\sigma^2}{2} \Delta V + H(t,x,\nabla V) = f_{1}\left(t,x,\int w(y) \mathfrak{p}(t,dy)\right) \text{ in } [0,T)\times O
\end{equation}
with boundary condition $V(t,x) = F(t,x)$ in $\{ T\}\times \cl(O) \cup [0,T)\times \partial O$, and let $m$ be the unique solution of the Kolmogorov forward equation
\begin{equation} \label{EqPDEappKF}
	\partial_{t}m - \frac{\sigma^2}{2} \Delta m + \mathrm{div} \left(m(t,x)\cdot \tilde{u}(t,x) \right) = 0 \text{ in } (0,T]\times O
\end{equation}
with initial condition $m(0,x) = m_{0}(x)$ and boundary condition $m(t,x) = 0$ in $(0,T]\times \partial O$. Then
\begin{align} \label{EqPDEappRel}
 & \tilde{u}(t,x) = -D_{z}H\left(t,x,\nabla V(t,x)\right),& & \mathfrak{p}(t,dx) = \frac{m(t,x)}{\int_{O}m(t,y)dy}dx.&
\end{align}

A non-standard feature in \eqref{EqPDEappRel} is the renormalization by the factor $1/\int_{O}m(t,y)dy$. This is related to the fact that $m(t,\cdot)$ will be the density of a sub-probability measure, not necessarily a probability measure if $t > 0$. Relationship \eqref{EqPDEappRel} clearly allows to eliminate the measure flow $\mathfrak{p}$ from Eq.~\eqref{EqPDEappHJB} and the feedback control $\tilde{u}$ from Eq.~\eqref{EqPDEappKF}, yielding a coupled system of (backward) Hamilton-Jacobi Bellman equation and Kolmogorov forward equation. This gives the \mbox{PDE} system for the mean field game with absorption.


\section{A counterexample} \label{SectCounterExmpl}

If the diffusion coefficient is degenerate and no additional controllability assumptions are made, then a regular feedback solution of the mean field game need not induce a sequence of approximate Nash equilibria with vanishing error.

The following (counter-)example is constructed in such a way that, for a certain initial condition, the individual state processes of the $N$-player games cannot leave the set $O$ of non-absorbing states before terminal time, while this is possible in the mean field game limit. Exit before terminal time can actually occur only at one fixed instant, if at all. With the choice of the costs we make, leaving before terminal time is preferable. The individual strategies that the solution of the mean field game induces in the $N$-player games will therefore push the state processes towards the boundary if they start from certain initial points, but will fail to make them exit (for $N$ odd, the probability of exit equals zero, for $N$ even it will tend to zero as $N\to \infty$). The failure to leave the set $O$ in the $N$-player games will produce costs that are higher than those of an alternative individual response strategy which does not try to make its state process exit.

To achieve the described effect, we choose deterministic state dynamics; the only source of randomness comes from the initial conditions. Individual states will have three components, where one component (the third) corresponds to time, while the second component simply keeps its own initial condition. Only the first component is controlled; its evolution is driven, apart from the control, by an average over the second component of the states of all players (a mean with respect to the measure variable in the limit). That interaction term is responsible for the different controllability of the $N$-player systems with respect to the limit system. Notice that the construction also relies on the particular choice of the initial conditions, which are singular with respect to Lebesgue measure. To be specific, consider the following data for our systems:
\begin{itemize}
	\item dimensions: $d = 3$, $d_{0} = 1$;
	
	\item time horizon: $T = 2$;
	
	\item set of control actions: $\Gamma \doteq \left\{ \gamma\in \mathbb{R}^{3} :  \gamma_1\in [-1,1], \gamma_2 = 0 = \gamma_3 \right\}$;
	
	\item set of non-absorbing states
	\[
	\begin{split}
		O\doteq \Bigl\{ x\in \mathbb{R}^{3} : & -4 < x_{1} \, , \; -2 < x_{2} < 2 \, , \; -1 < x_{3} < \frac{11}{5} \, ,\\
		&\; x_{1} < 1 + e^{x_{3}-1} \Bigr\}
	\end{split}
	\]

	\item measure integrand: $w$ bounded Lipschitz and such that $w(x)= x_{2}$ for all $x\in \cl(O)$;
	
	\item drift function:
	\[
		\bar{b}(t,x,y)\doteq \begin{pmatrix} -|y|\wedge \frac{1}{4}\\ 0\\ 1 \end{pmatrix},\quad (t,x,y)\in [0,2]\times \mathbb{R}^{3} \times \mathbb{R};
	\]
	
	\item dispersion coefficient: $\sigma\equiv 0$;
	
	\item running costs: $f\equiv 1$;
	
	\item terminal costs: $F$ non-negative bounded Lipschitz and such that
	\[
		F(t,x)= 1 + \frac{x_{3}}{12}\cdot x_{1} \text{ for all } (t,x)\in [0,2]\times \cl(O).
	\]
	
\end{itemize}

\begin{rem}
The set of non-absorbing states $O$ defined above has a boundary that is only piecewise smooth. Let $B \subset \mathbb{R}^3$ be the line segment given by
\[
	B\doteq \left\{ x\in \mathbb{R}^{3} : x_1 = 2 \, , \; -1\leq x_2 \leq 1 \, , \; x_3 = 1 \right\}.
\]
As will become clear below, the (counter-)example works for any bounded open set $O$ as long as $O$ contains
\[
	\left\{ x\in \mathbb{R}^{3} : -1\leq x_2 \leq 1 \, , \; 0 \leq x_3 \leq 2 \, , \; -1 - \frac{5}{4}x_3 \leq x_1 \leq 1+x_3 \right\} \setminus B,
\]
while $B \cap O = \emptyset$ (hence $B \subset \partial O$). There are bounded open sets with this property and smooth ($\mathbf{C}^{2}$ or even $\mathbf{C}^{\infty}$) boundary.
\end{rem}

Let $\rho$ denote the Rademacher distribution on $\Borel{\mathbb{R}}$, that is, $\supp(\rho) = \{-1,1\}$ and $\rho(\{-1\}) = 1/2 = \rho(\{1\})$. Define a probability measure $\nu$ on $\Borel{\mathbb{R}^{3}}$ by
\[
	\nu\doteq \rho\otimes \rho\otimes \delta_{0},
\]
and choose, for $N\in \mathbb{N}$, the initial distribution for the $N$-player game according to $\nu_{N}\doteq \otimes^{N} \nu$.

The dynamics of the $N$-player game are thus given by
\begin{equation*}
\begin{split}
\begin{pmatrix}
	X^{N}_{i,1}(t) \\ X^{N}_{i,2}(t) \\ X^{N}_{i,3}(t)
\end{pmatrix}
	&= \begin{pmatrix}
	 	X^{N}_{i,1}(0) \\ X^{N}_{i,2}(0) \\ 0
	 \end{pmatrix}
	 \\
	 &\;
	 + \int_{0}^{t} \begin{pmatrix}
	 u_{i,1}(s,\boldsymbol{X}^{N}) - \Bigl|\frac{1}{\bar{N}^{N}(s)}\sum_{j=1}^{N} \mathbf{1}_{[0,\tau^{N}_{j})}(s)\cdot X^{N}_{j,2}(s)\Bigr|\wedge \frac{1}{4} \\ 0\\ 1
	 \end{pmatrix}
	 ds,
\end{split}
\end{equation*}
where $X^{N}_{i,k}(0)$, $k\in \{1,2\}$, $i\in \{1,\ldots,N\}$, are i.i.d.\ Rademacher and $\boldsymbol{u} = (u_{1},\ldots,u_{N}) \in \mathcal{U}_{fb}^{N}$ is an admissible strategy vector. Clearly, for all $s\in [0,2]$,
\[
	\frac{1}{\bar{N}^{N}(s)}\sum_{j=1}^{N} \mathbf{1}_{[0,\tau^{N}_{j})}(s)\cdot X^{N}_{j,2}(s) = \frac{1}{\bar{N}^{N}(s)}\sum_{j=1}^{N} \mathbf{1}_{[0,\tau^{N}_{j})}(s)\cdot X^{N}_{j,2}(0).
\]  
Randomness thus enters the system only through the initial condition. We may therefore fix the stochastic basis. To this end, let $\xi^{N}_{i,k}$, $k\in \{1,2\}$, $i\in \{1,\ldots,N\}$, be i.i.d.\ Rademacher random variables defined on some probability space $(\Omega_{N},\mathcal{F}^{N},\Prb_{N})$. As filtration, we may take any filtration that makes the $\xi^{N}_{i,k}$ measurable at time zero. The dynamics of the $N$-player system can be rewritten as
\begin{equation*}
\begin{split}
\begin{pmatrix}
	X^{N}_{i,1}(t) \\ X^{N}_{i,2}(t) \\ X^{N}_{i,3}(t)
\end{pmatrix}
	= \begin{pmatrix} \xi^{N}_{i,1} + \int_{0}^{t}\left( u_{i,1}(s,\boldsymbol{X}^{N}) - \Bigl|\frac{1}{\bar{N}^{N}(s)}\displaystyle \sum_{j=1}^{N} \mathbf{1}_{[0,\tau^{N}_{j})}(s)\cdot \xi^{N}_{j,2}\Bigr|\wedge \frac{1}{4}\right)ds \\
		 \xi^{N}_{i,2} \\
		 t
		 \end{pmatrix}
\end{split}
\end{equation*}

Since $u_{i,1}$ takes values in $[-1,1]$ and $\xi^{N}_{i,1}$ values in $\{-1,1\}$, we have, for $\Prb_{N}$-almost all $\omega\in \Omega_{N}$,
\begin{equation} \label{EqExmplComponentOne}
	-1 - \frac{5}{4}t \leq X^{N}_{i,1}(t,\omega) \leq 1 + t \text{ for all } t\in [0,2].
\end{equation}
By construction of $O$, $X^{N}_{i}(\cdot,\omega)$ can leave $O$ before the terminal time only if $X^{N}_{i,1}(1,\omega) = 2$; this is possible only if $\sum_{j=1}^{N} \xi^{N}_{j,2}(\omega) = 0$. But
\[
	\Prb_{N}\left( \sum_{j=1}^{N} \xi^{N}_{j,2}(\omega) = 0 \right) = \begin{cases}
		0 &\text{if $N$ is odd,}\\
		\binom{N}{N/2} \frac{1}{2^{N}} &\text{if $N$ is even.}
	\end{cases}
\]
Since $\binom{N}{N/2}\frac{1}{2^{N}} \to 0$ as $N\to \infty$, we may assume for simplicity that $N$ is odd. The dynamics of the $N$-player game then reduce to
\begin{equation} \label{EqExmplPrelimitDynamics}
\begin{pmatrix}
	X^{N}_{i,1}(t) \\ X^{N}_{i,2}(t) \\ X^{N}_{i,3}(t)
\end{pmatrix}
	 = \begin{pmatrix} \xi^{N}_{i,1} + \int_{0}^{t} u_{i,1}(s,\boldsymbol{X}^{N})ds - t\cdot \left( \Bigl|\frac{1}{N}\sum_{j=1}^{N} \xi^{N}_{j,2}\Bigr|\wedge \frac{1}{4}\right) \\
	 \xi^{N}_{i,2} \\
	 t
	 \end{pmatrix}
\end{equation}
$t\in [0,2]$, for any admissible strategy vector $\boldsymbol{u}$. The associated costs for player $i$ are, in view of \eqref{EqExmplComponentOne},
\[
	J^{N}_{i}(\boldsymbol{u}) = 2 + \Mean_{N}\left[ 1 + \frac{1}{6}\int_{0}^{2} u_{i,1}(s,\boldsymbol{X}^{N})ds - \frac{1}{3} \left( \Bigl|\frac{1}{N}\sum_{j=1}^{N} \xi^{N}_{j,2}\Bigr|\wedge \frac{1}{4}\right) \right].\footnote{It is easy to see that if $\boldsymbol{u}$ is such that $u_{i,1}\equiv -1$ for all $i\in \{1,\ldots,N\}$, then $\boldsymbol{u}$ is a Nash equilibrium for the $N$-player game.}
\]

Let us turn to the limit model. Given a flow of measures $\mathfrak{p}\in \mathcal{M}$ and a stochastic-open loop control $((\Omega,\mathcal{F},(\mathcal{F}_{t}),\Prb),\xi,\alpha,W) \in \mathcal{A}$ such that $\Prb\circ \xi^{-1} = \nu$, the dynamics are given by
\begin{equation} \label{EqExmplLimitDynamics}
\begin{pmatrix}
	X_{1}(t) \\ X_{2}(t) \\ X_{3}(t)
\end{pmatrix}
	 = \begin{pmatrix}
	 	 	\xi_{1} \\ \xi_{2} \\ 0
	 \end{pmatrix}
+ \int_{0}^{t} \begin{pmatrix} \alpha_{1}(s) - \left|\int_{\mathbb{R}^3} w(y) \mathfrak{p}(s,dy) \right|\wedge \frac{1}{4} \\
	 	 	 0\\
	 	 	 1
	 	 	 \end{pmatrix}
	 	 	  ds,\quad t\in [0,2].
\end{equation}
Notice that $\xi_{1}$, $\xi_{2}$ are independent Rademacher variables.

Suppose that $\mathfrak{p}$ is such that, for all $t\in [0,2]$, $\supp(\mathfrak{p}(t))\subseteq \cl(O)$ and $\int_{\mathbb{R}^3} w(y) \mathfrak{p}(t,dy) = 0$. The dynamics then reduce to
\begin{equation} \label{EqExmplReducedLimitDynamics}
\begin{pmatrix}
	X_{1}(t) \\ X_{2}(t) \\ X_{3}(t)
\end{pmatrix}
	 = \begin{pmatrix}
	 	 	\xi_{1} + \int_{0}^{t}\alpha_{1}(s)ds \\ \xi_{2} \\ t
	 \end{pmatrix},
\end{equation}
while the associated expected costs are equal to $\Mean[\zeta_{\alpha}]$ where
\[
	\zeta_{\alpha}(\omega)\doteq \tau^X(\omega)\wedge 2 + 1 + \frac{\tau^{X}(\omega)\wedge 2}{12}\cdot X_{1}(\tau^{X}(\omega)\wedge 2,\omega),\quad \omega\in \Omega.
\]
Since $\alpha_{1}$ takes values in $[-1,1]$ and $\xi_{1}$ values in $\{-1,1\}$, we have, $\Prb$-almost surely,
\begin{equation} \label{EqExmplComponentOneLimit}
	-1 - t \leq X_{1}(t) \leq 1 + t \text{ for all } t\in [0,2].
\end{equation}
By construction of $O$, it follows that, for $\Prb$-almost every $\omega\in \Omega$, $X(\cdot,\omega)$ leaves $O$ before time $T=2$ if and only if $\xi_{1}(\omega) = 1$ and $\alpha_{1}(t,\omega) = 1$ for Lesbegue almost every $t\in [0,1]$. In this case, $\tau^{X}(\omega) = 1$, $X_{1}(1,\omega) = 2$, and $\zeta_{\alpha}(\omega) = 2 + 1/6$. If $X(\cdot,\omega)$ does not leave $O$ before the terminal time, then, by \eqref{EqExmplComponentOneLimit}, $\zeta_{\alpha}(\omega) \geq 2+1/2$, and the optimal control is to choose $\alpha_{1}(t,\omega) = -1$ for almost every $t\in [0,2]$. Therefore, if for $\Prb$-almost every $\omega\in \Omega$,
\begin{equation} \label{EqOptOpenLoop}
	\alpha(t,\omega) = \begin{cases}
	\trans{(1, 0, 0)} &\text{if } \xi_{1}(\omega) = 1 \text{ and } t\in [0,1],\\ 
	\trans{(-1, 0, 0)} &\text{if } \xi_{1}(\omega) = -1 \text{ or } t > 1,
	\end{cases}
\end{equation}
then
\[
	\Mean[\zeta_{\alpha}] = \frac{1}{2}\left(2+\frac{1}{6}\right) + \frac{1}{2}\left(2+\frac{1}{2}\right) = 2 + \frac{1}{3}
\]
and $\alpha$ is optimal in the sense that
\[
	\Mean[\zeta_{\alpha}] = V(\nu;\mathfrak{p}).
\]	

Now, choose $((\Omega,\mathcal{F},(\mathcal{F}_{t}),\Prb),\xi,\alpha,W) \in \mathcal{A}$ such that $\Prb\circ \xi^{-1} = \nu$ and $(\xi,\alpha)$ satisfies \eqref{EqOptOpenLoop} $\Prb$-almost surely. Let $X$ be the unique strong solution of Eq.~\eqref{EqExmplReducedLimitDynamics}, and define a flow of measures $\mathfrak{p}_{\ast}$ according to
\[
	\mathfrak{p}_{\ast}(t,\cdot)\doteq \Prb(X\in \cdot \;|\; \tau^{X} > t ),\quad t\in [0,2].
\]
Notice that $\Prb(\tau^{X} > t) \geq 1/2$ for all $t\in [0,2]$; thus, $\mathfrak{p}_{\ast}$ is well defined. By construction, $\supp(\mathfrak{p}_{\ast}) \subseteq \cl(O)$, which implies
\[
	\int_{\mathbb{R}^{3}} w(y)\mathfrak{p}_{\ast}(t,dy) = \int_{\mathbb{R}^{3}} y_{2}\mathfrak{p}_{\ast}(t,dy).
\]

We are going to show that $\int_{\mathbb{R}^{3}} w(y)\mathfrak{p}_{\ast}(t,dy) = 0$ for all $t\in [0,2]$. By definition of $\nu$, $X_{1}(0)$, $X_{2}(0)$, $X_{3}(0)$ are independent. Moreover, $X_{3}$ is (almost surely) deterministic, while $\alpha_{1}$ is measurable with respect to $\sigma(\xi_{1}) = \sigma(X_{1}(0))$. This implies that the real-valued processes $X_{1}$, $X_{2}$, $X_{3}$ are independent. The time of first exit $\tau^{X}$ can be rewritten in terms of $X_{1}$ and $X_{3}$ only. It follows that $\tau^{X}$ and $X_{2}$ are independent, hence
\begin{multline*}
	\int_{\mathbb{R}^3}w(y) \mathfrak{p}_{\ast}(t,dy) = \int_{\mathbb{R}^3}y_{2} \mathfrak{p}_{\ast}(t,dy) \\
	= \Mean_{\Prb}\left[ X_{2}(t) \right] = \Mean_{\Prb}\left[ X_{2}(0) \right] = \int_{\mathbb{R}} z \rho(dz) = 0.
\end{multline*}
As a consequence, $X$ solves Eq.~\eqref{EqExmplLimitDynamics} with flow of measures $\mathfrak{p} = \mathfrak{p}_{\ast}$ and the associated costs are optimal in the sense that
\[
	\Mean[\zeta_{\alpha}] = V(\nu;\mathfrak{p}_{\ast}).
\]

Recall that $\mathfrak{p}_{\ast}$ was defined as the conditional flow of measures for the law of $X$. Since $X$ solves Eq.~\eqref{EqExmplLimitDynamics} with flow of measures $\mathfrak{p} = \mathfrak{p}_{\ast}$, the conditional mean field property of Definition~\ref{DefMFGSolution} holds.

Relation \eqref{EqOptOpenLoop} between the open-loop control $\alpha$ and the initial condition $\xi$ induces a feedback strategy in $\mathcal{U}_{1}$, namely
\[
	u^{\ast}(t,\phi) = \begin{cases}
	\trans{(1, 0, 0)} &\text{if } \phi_{1}(0) = 1 \text{ and } t\in [0,1],\\ 
	\trans{(-1, 0, 0)} &\text{if } \phi_{1}(0) = -1 \text{ or } t > 1, \\
	\text{arbitrary} &\text{otherwise.}
	\end{cases}
\]
For pairs $(t,\phi)$ where the control is unspecified, we may choose the control actions in such a way that $u^{\ast}$ becomes Lipschitz continuous in the state variable at every point in time; to be specific, set
\[
	u^{\ast}(t,\phi)\doteq \begin{cases}
	\trans{(-1\vee \phi_{1}(0) \wedge 1, 0, 0)} &\text{if } t\in [0,1],\\ 
	\trans{(-1, 0, 0)} &\text{if } t > 1.
	\end{cases}
\]
Eq.~\eqref{EqLimitDynamics} is well posed under $u^{\ast}$ given the flow of measures $\mathfrak{p}_{\ast}$ and any initial distribution with support in $O$. Let $((\Omega_{\ast},\mathcal{F}_{\ast},\Prb_{\ast}),(\mathcal{F}^{\ast}_{t}),\bar{W},X^{\ast})$ be a solution of Eq.~\eqref{EqLimitDynamics} under $u^{\ast}$ with flow of measures $\mathfrak{p}_{\ast}$ and initial distribution $\nu = \rho\otimes \rho\otimes \delta_{0}$. Then, by construction,
\[
	\Prb_{\ast}\circ (X^{\ast})^{-1} = \Prb\circ X^{-1}.
\]	
This implies the optimality property
\[
	J(\nu,u^{\ast};\mathfrak{p}_{\ast}) = V(\nu;\mathfrak{p}_{\ast}),
\]
but also the conditional mean field property of Definition~\ref{DefMFGSolution}, namely
\[
	\mathfrak{p}_{\ast}(t) = \Prb(X\in \cdot \;|\; \tau^{X} > t ) = \Prb_{\ast}(X^{\ast}\in \cdot \;|\; \tau^{X^{\ast}} > t ) \text{ for all } t\in [0,2].
\]
It follows that $(\nu,u^{\ast},\mathfrak{p}_{\ast})$ is a feedback solution of the mean field game.

Let us check whether or not the feedback strategy $u^{\ast}$ induces a sequence of approximate Nash equilibria in analogy with Theorem~\ref{ThApproximateNash}. Thus, for $N\in \mathbb{N}$,
define $\boldsymbol{u}^{N} = (u^{N}_{1},\ldots,u^{N}_{N})$ by
\[
	u^{N}_{i}(t,\boldsymbol{\phi}) \doteq u^{\ast}(t,\phi_{i}),\quad t\in [0,T],\; \boldsymbol{\phi}=(\phi_{1},\ldots,\phi_{N})\in \mathcal{X}^{N},\; i\in \{1,\ldots,N\}.
\]
Equation~\eqref{EqPrelimitDynamics} has a unique solution under $\boldsymbol{u}^{N}$ with initial distribution $\nu_{N}$ because $\bar{b}$, $w$ are Lipschitz continuous in the state variable and $\boldsymbol{u}^{N}$ is measurable with respect to $\sigma(\boldsymbol{X}^{N}(0))$. Therefore, $\boldsymbol{u}^{N}\in \mathcal{U}^{N}_{fb}$. For simplicity, assume again that $N$ is odd. Let $\boldsymbol{X}^{N}$ be the unique strong solution of Equation~\eqref{EqExmplPrelimitDynamics} under $\boldsymbol{u}^{N}$. Then, for every $i\in \{1,\ldots,N\}$,
\begin{align*}
	J^{N}_{i}(\boldsymbol{u}^{N}) &= 2 + \Mean_{N}\left[ 1 + \frac{1}{6}\int_{0}^{2} u^{\ast}(s,X^{N}_{i})ds - \frac{1}{3} \left( \Bigl|\frac{1}{N}\sum_{j=1}^{N} \xi^{N}_{j,2}\Bigr|\wedge \frac{1}{4}\right) \right] \\
	&= 3 - \frac{1}{2}\cdot \frac{2}{6} - \frac{1}{3} \Mean_{N}\left[ \Bigl|\frac{1}{N}\sum_{j=1}^{N} \xi^{N}_{j,2}\Bigr|\wedge \frac{1}{4} \right] \\
	&\geq \frac{33}{12}.
\end{align*}
Suppose player one deviates from $\boldsymbol{u}^{N}$ by choosing the strategy that is constant and equal to $-1$. Denote that strategy by $v$. Notice that $[\boldsymbol{u}^{N,-1},v]\in \mathcal{U}^{N}_{fb}$. For the associated costs, we have
\begin{align*}
	J^{N}_{1}([\boldsymbol{u}^{N,-1},v]) &= 2 + \Mean_{N}\left[ 1 - \frac{2}{6} - \frac{1}{3} \left( \Bigl|\frac{1}{N}\sum_{j=1}^{N} \xi^{N}_{j,2}\Bigr|\wedge \frac{1}{4}\right) \right] \\
	&= 3 - \frac{1}{3} - \frac{1}{3} \Mean_{N}\left[ \Bigl|\frac{1}{N}\sum_{j=1}^{N} \xi^{N}_{j,2}\Bigr|\wedge \frac{1}{4} \right] \\
	&\leq \frac{32}{12}.
\end{align*}
Player one can thus save costs of at least $1/12$ by deviating from $\boldsymbol{u}^{N}$ for every $N$ odd (asymptotically, also for $N$ even). It follows that the strategy vectors induced by the feedback solution $(\nu,u^{\ast},\mathfrak{p}_{\ast})$ of the mean field game do not yield approximate Nash equilibria with vanishing error.


\begin{appendix}

\section*{Appendix}

Let $\nu\in \prbms{\mathbb{R}^{d}}$ with support in $O$. Recall the definition of $\Theta_{\nu}\in \prbms{\mathcal{X}}$ in \eqref{ExRefMeasure} at the beginning of Section~\ref{SectApproxNash}, and also the definition of the sets $\mathcal{Q}_{\nu,c}\subset \prbms{\mathcal{X}}$, $c\geq 0$, given there.

Let $\sigma$ be a $d\times d$-matrix, and let
\begin{align*}
	& \hat{b}\!: [0,T]\times \mathcal{X} \times \prbms{\mathcal{X}} \rightarrow \mathbb{R}^{d}, & &\tilde{b}_{N}\!: [0,T]\times \mathcal{X}^{N} \times \prbms{\mathcal{X}} \rightarrow \mathbb{R}^{d},& &N\in \mathbb{N},&
\end{align*}
be functions such that the following hold:
\begin{enumerate}
	\item[(ND)] Non-degeneracy: $\sigma$ is invertible.

	\item[(M)] Measurability: $\hat{b}$, $\tilde{b}_{N}$, $N\in \mathbb{N}$, are Borel measurable and progressive in the sense that, for all $t\in [0,T]$,
	\[
		\hat{b}(t,\phi,\theta) = \hat{b}(t,\tilde{\phi},\tilde{\theta}) \text{ whenever } \phi_{|[0,t]} = \tilde{\phi}_{|[0,t]} \text{ and } \theta_{|\mathcal{G}_{t}} = \tilde{\theta}_{|\mathcal{G}_{t}},
	\]
	and analogously for $\tilde{b}_{N}$.
	
	\item[(B)] Boundedness: there exists a finite constant $K > 0$ such that
	\[
		\|\hat{b}\|_{\infty} \vee \sup_{N\in \mathbb{N}} \|\tilde{b}_{N}\|_{\infty} \leq K.
	\]
\end{enumerate}

For each $N\in \mathbb{N}$, let $\nu_{N}\in \prbms{\mathbb{R}^{N\times d}}$ be symmetric with $\supp(\nu_{N}) \subset O^{N}$, as above. 
We assume, in addition to (ND), (M), and (B): 
\begin{enumerate}

	\item[(I)] Initial distributions: the sequence $(\nu_{N})_{N\in \mathbb{N}}$ is $\nu$-chaotic, where $\nu$ has support in $O$.
	
	\item[(C)] Almost continuity: for Lebesgue a.e.\ $t\in [0,T]$, every $\theta\in \mathcal{Q}_{\nu,K}$,
	\[
	\begin{split}
		\Theta_{\nu}\Bigl(\phi\in \mathcal{X}: \exists\, (\phi_{n},\theta_{n})\subset \mathcal{X}\times \prbms{\mathcal{X}} \text{ such that } \hat{b}(t,\phi_{n},\theta_{n}) \not\to \hat{b}(t,\phi,\theta) \\
		\text{ while } (\phi_{n},\theta_{n})\to (\phi,\theta) \Bigr) = 0.
	\end{split}
	\]
\end{enumerate}

In Section~\ref{AppUniqueness} of this Appendix, we will make the following assumption of partial Lipschitz continuity:
\begin{itemize}
	\item[(L)] Lipschitz continuity in the measure variable: There exists a finite constant $L > 0$ such that for all $t\in [0,T]$, all $\phi\in \mathcal{X}$,
	\[
		\left| \hat{b}(t,\phi,\theta) - \hat{b}(t,\phi,\tilde{\theta}) \right| \leq L\cdot \mathrm{d}_{t}(\theta,\tilde{\theta}) \text{ whenever } \theta, \tilde{\theta} \in \mathcal{Q}_{\nu,K},
	\]
\end{itemize}
where the distances $\mathrm{d}_{t}$ are pseudo-metrics derived from the total variation distance; see below.

\section{Convergence of empirical measures} \label{AppConvergence}

For $N\in \mathbb{N}$, consider the system of equations
\begin{equation} \label{EqAppPrelimitDynamics}
\begin{split}
	X^{N}_{1}(t) &= X^{N}_{1}(0) +  \int_{0}^{t} \tilde{b}_{N}\left(s,\boldsymbol{X}^{N},\mu^{N}\right)ds + \sigma W^{N}_{1}(t), \\
	X^{N}_{i}(t) &= X^{N}_{i}(0) +  \int_{0}^{t} \hat{b}\left(s,X^{N}_{i},\mu^{N}\right)ds + \sigma W^{N}_{i}(t), \\
	&i\in \{2,\ldots,N\},\; t\in [0,T],
\end{split}
\end{equation}
where $W^{N}_{1},\ldots,W^{N}_{N}$ are independent $d$-dimensional Wiener processes defined on some filtered probability space $(\Omega,\mathcal{F},(\mathcal{F}_{t}),\Prb)$, and $\mu^{N}$ is the empirical measure of the players' state trajectories, that is,   
\[
	\mu^{N}_{\omega}(\cdot)\doteq  \frac{1}{N} \sum_{j=1}^{N}  \delta_{X^{N}_{j}}(\cdot,\omega),\quad \omega\in \Omega.
\]

A solution of Eq.~\eqref{EqAppPrelimitDynamics} with initial distribution $\nu_{N}$ is given by a triple $((\Omega,\mathcal{F},(\mathcal{F}_{t}),\Prb),\boldsymbol{W}^{N},\boldsymbol{X}^{N})$ such that $(\Omega,\mathcal{F},(\mathcal{F}_{t}),\Prb)$ is a filtered probability space satisfying the usual hypotheses, $\boldsymbol{W}^{N} = (W^{N}_{1},\ldots,W^{N}_{N})$ a vector of independent $d$-dimensional $(\mathcal{F}_{t})$-Wiener processes, and $\boldsymbol{X}^{N} = (X^{N}_{1},\ldots,X^{N}_{N})$ a vector of continuous $\mathbb{R}^{d}$-valued $(\mathcal{F}_{t})$-adapted processes such that Eq.~\eqref{EqAppPrelimitDynamics} holds $\Prb$-almost surely with $\Prb\circ(\boldsymbol{X}^{N}(0))^{-1} = \nu_{N}$.

As in Section~\ref{SectPrelimitSystems}, existence and uniqueness in law of solutions to Eq.~\eqref{EqAppPrelimitDynamics} hold thanks to Girsanov's theorem and assumptions (ND), (M), and (B). Now, for each $N\in \mathbb{N}$, take a solution $((\Omega_{N},\mathcal{F}_{N},(\mathcal{F}^{N}_{t}),\Prb_{N}),\boldsymbol{W}^{N},\boldsymbol{X}^{N})$ of Eq.~\eqref{EqAppPrelimitDynamics} with initial distribution $\nu_{N}$, and let $\mu^{N}$ be the associated empirical measure on the path space $\mathcal{X}$.

\begin{lemma} \label{LemmaAppTightness}
	Grant (ND), (M), (B), (I), and (C). Then $(\Prb_{N}\circ(\mu^{N})^{-1})_{N\in \mathbb{N}}$ is tight in $\prbms{\prbms{\mathcal{X}}}$, and its limit points have support in $\mathcal{Q}_{\nu,K}$.
\end{lemma}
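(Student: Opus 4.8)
The plan is to split the statement into two parts: tightness of $(\Prb_{N}\circ(\mu^{N})^{-1})_{N}$ and identification of its limit points. For tightness, I would first reduce the question to the intensity (mean) measures: by a standard criterion for tightness of random measures (see, e.g., \citet{sznitman89}), a sequence in $\prbms{\prbms{\mathcal{X}}}$ is tight if and only if the associated intensity measures are tight in $\prbms{\mathcal{X}}$. Here the intensity measure of $\Prb_{N}\circ(\mu^{N})^{-1}$ is $\bar{\mu}^{N}\doteq\Mean_{\Prb_{N}}[\mu^{N}]=\frac{1}{N}\sum_{i=1}^{N}\Prb_{N}\circ(X^{N}_{i})^{-1}$, so it suffices to show that $(\bar{\mu}^{N})_{N}$ is tight. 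To this end, note from \eqref{EqAppPrelimitDynamics} that each coordinate admits the representation $X^{N}_{i}(t)=X^{N}_{i}(0)+\int_{0}^{t}\beta^{N}_{i}(s)\,ds+\sigma W^{N}_{i}(t)$, where $\beta^{N}_{1}\doteq\tilde{b}_{N}(\cdot,\boldsymbol{X}^{N},\mu^{N})$ and $\beta^{N}_{i}\doteq\hat{b}(\cdot,X^{N}_{i},\mu^{N})$ for $i\geq 2$, all bounded in norm by $K$ thanks to (B). Since $\supp(\nu_{N})\subset O^{N}$ with $O$ bounded and $|\beta^{N}_{i}|\leq K$, the Kolmogorov--Chentsov criterion applied to the Gaussian part gives uniform-in-$(N,i)$ modulus-of-continuity bounds, so $\{\Prb_{N}\circ(X^{N}_{i})^{-1}:N\in\mathbb{N},\,1\leq i\leq N\}$ is relatively compact in $\prbms{\mathcal{X}}$; averaging over $i$ preserves tightness. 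This part uses only (B), the support condition in (I), and the boundedness of $O$; in particular the single deviating first particle is harmless, since the drift bound $K$ is uniform over all particles and it contributes only a single term of weight $1/N$.

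Next I would identify the limit points. Let $\Pi$ be a weak limit of a subsequence of $(\Prb_{N}\circ(\mu^{N})^{-1})_{N}$; the goal is $\Pi(\mathcal{Q}_{\nu,K})=1$, which suffices because $\mathcal{Q}_{\nu,K}$ is compact, hence closed, by Lemma~\ref{LemmaAppRegularityCompact}. I would characterize $\mathcal{Q}_{\nu,K}$ by a countable family of weakly closed conditions and verify each $\Pi$-almost surely. Concretely, with $a\doteq\sigma\trans{\sigma}$, a measure $\theta$ lies in $\mathcal{Q}_{\nu,K}$ if and only if $\theta\circ\hat{X}(0)^{-1}=\nu$ and, for every $g$ in a countable $\mathbf{C}^{2}_{b}(\mathbb{R}^{d})$ family that is determining for the martingale problem, all rational $0\leq s\leq t$, and every bounded continuous $\mathcal{G}_{s}$-measurable $h\geq 0$,
\[
	\Bigl| \Mean_{\theta}\Bigl[ \Bigl( g(\hat{X}(t)) - g(\hat{X}(s)) - \int_{s}^{t} \tfrac{1}{2}\mathrm{Tr}\bigl(a\,\partial^{2}_{xx}g\bigr)(\hat{X}(r))\,dr \Bigr) h \Bigr] \Bigr| \leq K\, \Mean_{\theta}\Bigl[ h \int_{s}^{t} |\nabla g(\hat{X}(r))|\,dr \Bigr].
\]
Testing against $g$ with vanishing gradient but prescribed Hessian at a point forces the diffusion to equal $a$, while the inequality itself encodes that the drift is bounded by $K$. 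The initial-law condition $\theta\circ\hat{X}(0)^{-1}=\nu$ holds $\Pi$-almost surely by the law of large numbers for $\nu$-chaotic initial data, i.e.\ assumption (I).

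The crux is then the prelimit estimate. For a fixed datum $(g,s,t,h)$, apply It\^o's formula to $g(X^{N}_{i})$, multiply by the $\mathcal{F}_{s}$-measurable $h(X^{N}_{i})$, and average: the quantity inside the absolute value, evaluated at $\mu^{N}$, equals a drift part $\frac{1}{N}\sum_{i}h(X^{N}_{i})\int_{s}^{t}\nabla g(X^{N}_{i})\cdot\beta^{N}_{i}\,dr$, bounded pathwise by $K\,\Mean_{\mu^{N}}[h\int_{s}^{t}|\nabla g|\,dr]$ because $|\beta^{N}_{i}|\leq K$, plus the fluctuation $\frac{1}{N}\sum_{i}h(X^{N}_{i})\int_{s}^{t}\nabla g(X^{N}_{i})\sigma\,dW^{N}_{i}$. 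The latter tends to $0$ in $L^{2}(\Prb_{N})$: as the $W^{N}_{i}$ are independent, the stochastic integrals are orthogonal martingale increments with $\mathcal{F}_{s}$-measurable prefactors, so the cross terms vanish and the variance is of order $1/N$. Writing $\Phi(\theta)\doteq|\Mean_{\theta}[(\cdots)h]|-K\Mean_{\theta}[h\int_{s}^{t}|\nabla g|\,dr]$, which is a bounded weakly continuous functional on $\prbms{\mathcal{X}}$, one gets $\Phi(\mu^{N})^{+}\leq|\mathrm{fluctuation}|\to 0$ in $L^{1}(\Prb_{N})$, hence $\Mean_{\Pi}[\Phi^{+}]=\lim_{N}\Mean_{\Prb_{N}}[\Phi(\mu^{N})^{+}]=0$ along the subsequence, so the inequality holds $\Pi$-almost surely. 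Intersecting over the countable family and adjoining the initial-law condition yields $\Pi(\mathcal{Q}_{\nu,K})=1$.

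I expect the main obstacle to be exactly this identification step: controlling the martingale fluctuations so as to pass the pointwise drift bound to the limit, and ensuring that the chosen countable family of inequalities genuinely pins down both the diffusion coefficient $a=\sigma\trans{\sigma}$ and the drift bound $K$. By contrast, the asymmetry created by the deviating first particle is immaterial throughout, as it only ever enters the averages with weight $1/N$. I note that assumption (C) is not needed for the present conclusions; it enters only later, in identifying the limit as a McKean--Vlasov solution (Lemma~\ref{LemmaAppLimitSolutions}).
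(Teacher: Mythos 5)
Your first part (tightness) is essentially identical to the paper's: reduce to the intensity measures via the standard criterion from \citet{sznitman89}, then use the compact support of $\nu_{N}$, the uniform drift bound $K$ from (B), and Kolmogorov--Chentsov. Your second part, however, takes a genuinely different route. The paper identifies the limit points by a compactification argument: it encodes each particle's drift as a stochastic relaxed control with values in the compact ball $B_{K}(0)$, forms the extended empirical measure on $\mathcal{X}\times\mathcal{R}$, uses compactness of $\mathcal{R}$ to pass to the limit, and then runs a single martingale argument to show that under almost every limit measure the canonical process solves Eq.~\eqref{EqBoundedControlDynamics} with a $B_{K}(0)$-valued control -- membership in $\mathcal{Q}_{\nu,K}$ then holds by definition. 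You instead characterize $\mathcal{Q}_{\nu,K}$ by a countable family of closed \emph{inequality} constraints of martingale-problem type and verify each one in the limit via an It{\^o} expansion plus an $O(1/N)$ variance bound on the martingale fluctuations. Both are sound; the relaxed-control route buys you membership in $\mathcal{Q}_{\nu,K}$ for free once the limiting controlled equation is identified, whereas your route avoids introducing $\mathcal{R}$ at all and makes the drift bound $K$ appear transparently as a pathwise inequality that survives weak limits. Your observations that the deviating first particle only enters with weight $1/N$ and that (C) is not actually used in this lemma are both correct (the paper's own proof of this lemma also does not invoke (C); it is needed only in Lemma~\ref{LemmaAppLimitSolutions}).

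One step deserves more care than you give it: the converse direction of your characterization, i.e.\ that the initial-law condition together with the countable family of inequalities forces $\theta\in\mathcal{Q}_{\nu,K}$. This is true, but it is the load-bearing part of your argument and requires real work: a Rao/Doob--Meyer decomposition of the quasimartingales $g(\hat{X}(t))-g(\hat{X}(0))-\int_{0}^{t}\tfrac12\mathrm{Tr}(a\,\partial^{2}_{xx}g)(\hat{X}(r))\,dr$ with an absolutely continuous compensator dominated by $K\lvert\nabla g(\hat{X}(t))\rvert\,dt$; a localization argument, since the test functions $c\cdot x$ and $x_{i}x_{j}$ you implicitly need are not in $\mathbf{C}^{2}_{b}$; testing against a countable dense set of \emph{directions} $c$ (coordinate functions alone only give $\lvert v\rvert\le\sqrt{d}\,K$, not $\lvert v\rvert\le K$ in Euclidean norm); and a L{\'e}vy-characterization step using (ND) to write the martingale part as $\sigma W$. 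Also, your heuristic that ``$g$ with vanishing gradient but prescribed Hessian at a point forces the diffusion to equal $a$'' does not work as literally stated, because the right-hand side of your inequality involves $\int_{s}^{t}\lvert\nabla g(\hat{X}(r))\rvert\,dr$ along the whole path rather than $\lvert\nabla g\rvert$ at a single point; one needs a short-time expansion (the right-hand side is $O((t-s)^{3/2})$ while a wrong diffusion coefficient contributes at order $t-s$) or, more cleanly, the bracket identification after the drift has been extracted. None of this is an obstruction, but it is precisely the work that the paper's relaxed-control compactification sidesteps.
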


\begin{proof}
For $N\in \mathbb{N}$, let $\iota_{N}$ be the intensity measure of $\Prb_{N}\circ(\mu^{N})^{-1}$, that is,
\[
	\iota_{N}(A)\doteq \Mean_{N}\left[ \mu^{N}(A) \right],\quad A\in \Borel{\mathcal{X}}.
\]
Tightness of $(\Prb_{N}\circ(\mu^{N})^{-1})$ in $\prbms{\prbms{\mathcal{X}}}$ is then equivalent to the tightness of $(\iota_{N})$ in $\prbms{\mathcal{X}}$  \citep[cf.\ (2.5) in][p.\,178]{sznitman89}. By construction,
\[
	\iota_{N}(A) = \frac{1}{N} \sum_{i=1}^{N} \Prb_{N}\left( X^{N}_{i} \in A \right),\quad A\in \Borel{\mathcal{X}}.
\]
It is therefore enough to check that the family $(\Prb\circ(X^{N}_{i})^{-1})_{N\in \mathbb{N}, i\in \{1,\ldots,N\}}$ is tight. Now, for $N\in \mathbb{N}$, $i\in \{1,\ldots,N\}$,
\[
	\Prb_{N}\left( X^{N}_{i}(0) \in \cl(O) \right) = 1,
\]
and $\cl(O)$ is compact since $O$ is open and bounded. Moreover, thanks to assumption (B), for all $s,t \in [0,T]$,
\[
	\left|X^{N}_{i}(t) - X^{N}_{i}(s)\right| \leq K\cdot |t-s| + |\sigma|\cdot \left|W^{N}_{i}(t) - W^{N}_{i}(s) \right|,
\]
where we recall that $W^{N}_{i}$ is a standard Wiener process under $\Prb_{N}$. Tightness of $(\Prb_N \circ(X^{N}_{i})^{-1})$ is now a consequence of the Kolmogorov-Chentsov tightness criterion \citep[for instance, Corollary~16.9 in][p.\,313]{kallenberg01}.

\bigskip
As to the support of the limit points of $(\Prb_{N}\circ(\mu^{N})^{-1})$, we interpret the drift terms appearing in \eqref{EqAppPrelimitDynamics} as stochastic relaxed controls. To this end, set $\mathcal{R}\doteq \mathcal{R}_{B_{K}(0)}$, where $B_{K}(0) \subset \mathbb{R}^{d}$ is the closed ball of radius $K$ around the origin. Then $\mathcal{R}$ is compact (cf.\ Appendix~\ref{AppRelaxed}). For $N\in \mathbb{N}$, let $\rho^{N}_{i}$ be the $\mathcal{R}$-valued $(\mathcal{F}^{N}_{t})$-adapted random measure determined by
\begin{align*}
	\rho^{N}_{i,\omega}\bigl(B\times I\bigr)&\doteq \begin{cases}
	\int_{I}\delta_{\tilde{b}_{N}(t,\boldsymbol{X}^{N}(\cdot,\omega),\mu^{N}_{\omega})}(B)dt &\text{if } i = 1,\\
	\int_{I}\delta_{\hat{b}(t,X^{N}_{i}(\cdot,\omega),\mu^{N}_{\omega})}(B)dt &\text{if } i > 1,
	\end{cases} \\
	& B\in \Borel{\Gamma},\; I\in \Borel{[0,T]},\;\omega \in \Omega_{N}.
\end{align*}
We can rewrite Eq.~\eqref{EqAppPrelimitDynamics} in terms of $\rho^{N}_{1},\ldots,\rho^{N}_{N}$:
\begin{equation} \label{EqAppPrelimitDynamics2}
\begin{split}
	X^{N}_{i}(t) &= X^{N}_{i}(0) +  \int_{B_{K}(0)\times [0,t]} y\, \rho^{N}_{i}(dy,ds) + \sigma W^{N}_{i}(t),\\
	& i\in \{1,\ldots,N\},\; t\in [0,T].
\end{split}
\end{equation}
Now, form the extended empirical measure
\[
	Q^{N}_{\omega}\doteq \frac{1}{N} \sum_{i=1}^{N} \delta_{(X^{N}_{i}(\cdot,\omega),\rho^{N}_{i,\omega})},\quad \omega\in \Omega_{N}.
\]
Thus, $Q^{N}$ is a $\prbms{\mathcal{X}\times \mathcal{R}}$-valued random variable, and the projection on its first component coincides with $\mu^{N}$. The family $(\Prb_{N}\circ(Q^{N})^{-1})_{N\in \mathbb{N}}$ is tight in $\prbms{\prbms{\mathcal{X}\times \mathcal{R}}}$ thanks to the first part of the proof and the fact that $\mathcal{R}$ is compact.

With a slight abuse of notation, let $(\hat{X},\hat{\rho})$ denote the canonical process on $\mathcal{X}\times \mathcal{R}$. Let $(\Prb_{n}\circ(Q^{n})^{-1})_{n\in I}$ be a convergent subsequence of $(\Prb_{N}\circ(Q^{N})^{-1})_{N\in \mathbb{N}}$, and let $Q$ be a $\prbms{\mathcal{X}\times \mathcal{R}}$-valued random variable defined on some probability space $(\Omega,\mathcal{F},\Prb)$ such that
\[
	Q^{n} \to Q \text{ in distribution as } I\ni n\to\infty.
\]
We have to show that $Q_{\omega}\circ \hat{X}^{-1}\in \mathcal{Q}_{\nu,K}$ for $\Prb$-almost every $\omega\in \Omega$.
To this end, define a process $\hat{W}$ on $\mathcal{X}\times \mathcal{R}$ by
\[
	\hat{W}(t)\doteq \sigma^{-1}\left(\hat{X}(t) - \hat{X}(0) -  \int_{\bar{B}_{K}(0)\times [0,t]} y\, \hat{\rho}(dy,ds) \right),\quad t\in [0,T].
\]
By a martingale argument similar to that in the proof of Lemma~\ref{LemmaAppLimitSolutions}, but using Eq.~\eqref{EqAppPrelimitDynamics2}, one checks that $\hat{W}$ is a standard Wiener process under $Q_{\omega}$ for $\Prb$-almost every $\omega\in \Omega$. This entails that $\hat{X}$ solves Eq.~\eqref{EqBoundedControlDynamics} under $Q_{\omega}$ with $B_{K}(0)$-valued control process
\[
	v(t,(\phi,r))\doteq \int_{B_{K}(0)} y\, \dot{r}_{t}(dy),\quad t\in [0,T],\; (\phi,r)\in \mathcal{X}\times \mathcal{R}.
\]
It follows that $Q_{\omega}\circ \hat{X}^{-1}\in \mathcal{Q}_{\nu,K}$ for $\Prb$-almost every $\omega\in \Omega$.
\end{proof}

In order to further characterize the limit points of $(\Prb_{N}\circ(\mu^{N})^{-1})_{N\in \mathbb{N}}$, consider, for $\theta\in \prbms{\mathcal{X}}$, the equation
\begin{equation} \label{EqAppLimitDynamics}
	X(t) = X(0) +  \int_{0}^{t} \hat{b}\left(s,X,\theta\right)ds + \sigma W(t),\quad t\in [0,T],
\end{equation}
where $W$ is a $d$-dimensional Wiener process defined on some filtered probability space. A solution of Eq.~\eqref{EqAppLimitDynamics} with measure $\theta\in \prbms{\mathcal{X}}$ and initial distribution $\nu$ is a triple $((\Omega,\mathcal{F},(\mathcal{F}_{t}),\Prb),W,X)$ such that $(\Omega,\mathcal{F},(\mathcal{F}_{t}),\Prb)$ is a filtered probability space satisfying the usual hypotheses, $W$ a $d$-dimensional $(\mathcal{F}_{t})$-Wiener process, and $X$ an $\mathbb{R}^{d}$-valued $(\mathcal{F}_{t})$-adapted process such that \eqref{EqAppLimitDynamics} holds $\Prb$-almost surely with $\Prb\circ(X(0))^{-1} = \nu$ and drift coefficient $\hat{b}(\cdot,\cdot,\theta)$. Again thanks to Girsanov's theorem and assumptions (ND), (M), and (B), existence and uniqueness in law of solutions to Eq.~\eqref{EqAppLimitDynamics} hold for each fixed $\theta\in \prbms{\mathcal{X}}$.

Recall that $\hat{X}$ denotes the canonical process on $\mathcal{X}$.
\begin{defn} \label{DefAppMVSolution}
	A measure $\theta\in \prbms{\mathcal{X}}$ is called a \emph{McKean-Vlasov solution} of Eq.~\eqref{EqAppLimitDynamics} if  there exists a solution $((\Omega,\mathcal{F},(\mathcal{F}_{t}),\Prb),W,X)$ of Eq.~\eqref{EqAppLimitDynamics} with initial distribution $\theta\circ (\hat{X}(0))^{-1}$ and measure $\theta$ such that $\Prb\circ X^{-1} = \theta$.
\end{defn}

By uniqueness in law (with fixed measure $\theta$), if $\Prb\circ X^{-1} = \theta$ holds for one solution $((\Omega,\mathcal{F},(\mathcal{F}_{t}),\Prb),W,X)$ of Eq.~\eqref{EqAppLimitDynamics} with measure $\theta$ and initial distribution $\theta\circ (\hat{X}(0))^{-1}$, then it holds for any such solution of Eq.~\eqref{EqAppLimitDynamics}. According to the next lemma, limit points of the sequence of empirical measures $(\mu^{N})_{N\in \mathbb{N}}$ are almost surely concentrated on McKean-Vlasov solutions of Eq.~\eqref{EqAppLimitDynamics}. This yields, in particular, existence of McKean-Vlasov solutions. Those solutions do not necessarily have the same law.

\begin{lemma} \label{LemmaAppLimitSolutions}
	Grant (ND), (M), (B), (I), and (C). Let $(\Prb_{n}\circ(\mu^{n})^{-1})_{n\in I}$ be a convergent subsequence of the family $(\Prb_{N}\circ(\mu^{N})^{-1})_{N\in \mathbb{N}}$, and let $\mu$ be a $\prbms{\mathcal{X}}$-valued random variable defined on some probability space $(\Omega,\mathcal{F},\Prb)$ such that
	\[
		\mu^{n} \to \mu \text{ in distribution as } I\ni n\to\infty.
	\]
	Then $\mu_{\omega}$ is a McKean-Vlasov solution of Eq.~\eqref{EqAppLimitDynamics} for $\Prb$-almost every $\omega\in \Omega$.
\end{lemma}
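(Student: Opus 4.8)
The plan is to characterize the limiting random measure $\mu$ through the martingale problem associated with the (path-dependent) operator $\mathcal L^{\theta}_r \doteq \hat b(r,\cdot,\theta)\cdot\nabla + \tfrac12\mathrm{Tr}(a\,\partial^2_{xx})$, where $a\doteq\sigma\trans{\sigma}$, and to show that for $\Prb$-almost every $\omega$ the measure $\mu_\omega$ solves this martingale problem \emph{with its own measure plugged in}. By the existence and uniqueness in law noted after \eqref{EqAppLimitDynamics} (Girsanov, using (ND), (M), (B)), together with L\'evy's characterization, this is exactly the statement that $\mu_\omega$ is a McKean-Vlasov solution in the sense of Definition~\ref{DefAppMVSolution}. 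Fix $\varphi\in \mathbf C^2_b(\mathbb R^d)$, reals $0\le s<t\le T$, and a bounded $\mathcal G_s$-measurable $g\colon\mathcal X\to\mathbb R$, and define on $\prbms{\mathcal X}$ the test functional
\[
	\Phi(\theta)\doteq \int_{\mathcal X}\Big(\varphi(\phi(t))-\varphi(\phi(s))-\int_s^t \big(\hat b(r,\phi,\theta)\cdot\nabla\varphi(\phi(r))+\tfrac12\mathrm{Tr}(a\,\partial^2_{xx}\varphi(\phi(r)))\big)\,dr\Big)g(\phi)\,\theta(d\phi).
\]
A measure $\theta$ with $\theta\circ\hat X(0)^{-1}=\nu$ solves the martingale problem for $\mathcal L^{\theta}$ precisely when $\Phi(\theta)=0$ for all such $(\varphi,s,t,g)$; running over a countable family of data that determines the problem and pins down the law then suffices.

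The first step shows that the prelimit is an approximate solution. For $i\ge 2$, It\^o's formula applied to \eqref{EqAppPrelimitDynamics} gives that $M^{i}(r)\doteq \varphi(X^N_i(r))-\varphi(X^N_i(0))-\int_0^r\mathcal L^{\mu^N}_u\varphi(X^N_i)\,du$ equals $\int_0^r\nabla\varphi(X^N_i)\cdot\sigma\,dW^N_i$, a martingale (progressivity (M) makes $\hat b(u,X^N_i,\mu^N)$ $\mathcal F_u$-measurable). Writing $\Phi(\mu^N)=\frac1N\sum_{i=1}^N Z_i$ with $Z_i\doteq (M^i(t)-M^i(s))\,g(X^N_i)$, the increment has vanishing $\mathcal F_s$-conditional expectation and $g(X^N_i)$ is $\mathcal F_s$-measurable, so $\Mean_N[Z_i]=0$ for $i\ge2$; moreover $\langle M^i,M^j\rangle=0$ for $i\neq j$ by independence of $W^N_i,W^N_j$, so $M^iM^j$ is a martingale and $\Mean_N[Z_iZ_j]=0$ for $i\neq j$, $i,j\ge2$. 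With all integrands bounded by (B), the diagonal terms and the $O(N)$ many terms carrying the single index $1$ (where $\tilde b_N$ replaces $\hat b$, producing a bounded discrepancy) are each $O(1/N)$, whence
\[
	\Mean_N\big[\Phi(\mu^N)^2\big]\stackrel{N\to\infty}{\longrightarrow}0.
\]

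The second step is the passage to the limit. I would show that $\Phi$ is bounded and continuous at every $\theta_0\in\mathcal Q_{\nu,K}$; since by Lemma~\ref{LemmaAppTightness} the law of $\mu$ is carried by the closed set $\mathcal Q_{\nu,K}$, the map $\Phi^2$ is then bounded and $\Law(\mu)$-almost surely continuous, so the mapping theorem yields $\Mean_N[\Phi(\mu^N)^2]\to\Mean_\Prb[\Phi(\mu)^2]$, and with the first step this forces $\Mean_\Prb[\Phi(\mu)^2]=0$, i.e.\ $\Phi(\mu_\omega)=0$ for $\Prb$-a.e.\ $\omega$. For the continuity at $\theta_0$, take any $\theta_n\to\theta_0$ weakly and split $\Phi(\theta_n)-\Phi(\theta_0)$ into $\int(G(\cdot,\theta_n)-G(\cdot,\theta_0))\,d\theta_n$ and $\int G(\cdot,\theta_0)\,(d\theta_n-d\theta_0)$, writing $G$ for the bracketed integrand. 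The crux is the joint continuity of $G$ at $(\phi,\theta_0)$ for $\theta_0$-a.e.\ $\phi$: assumption (C) supplies this for $\Theta_\nu$-a.e.\ $\phi$ and a.e.\ $r$, Fubini and the bound (B) upgrade it through the $dr$-integral by dominated convergence, and Lemma~\ref{LemmaAppRegularity} gives $\theta_0\sim\Theta_\nu$, so a $\Theta_\nu$-null exceptional set is also $\theta_0$-null. A Skorokhod representation for $\theta_n\to\theta_0$ converts this continuous convergence, together with uniform boundedness, into convergence of both integrals. Finally, intersecting the $\Prb$-null sets over a countable determining family of $(\varphi,s,t,g)$ and adjoining the initial-law identity $\mu_\omega\circ\hat X(0)^{-1}=\nu$ (from chaoticity (I) and continuity of evaluation at time $0$), I conclude that $\mu_\omega$ solves the martingale problem for $\mathcal L^{\mu_\omega}$ with initial law $\nu$, hence is a McKean-Vlasov solution.

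The main obstacle is precisely this last continuity argument: the drift $\hat b$ is genuinely discontinuous because the exit time $\tau$ enters \eqref{ExCoeffDrift}, the measure variable appears \emph{simultaneously} as integrator and inside $\hat b$, and the approximating empirical measures $\mu^N_\omega$ are mutually singular with the reference measure $\Theta_\nu$ governing (C). The resolution is that (C) is stated as a joint continuous-convergence condition valid along arbitrary sequences $(\phi_n,\theta_n)\to(\phi,\theta_0)$, so it applies even when the $\theta_n$ are singular, while the equivalence $\theta_0\sim\Theta_\nu$ of Lemma~\ref{LemmaAppRegularity} transports the negligibility of the discontinuity set from $\Theta_\nu$ to the relevant limit measures.
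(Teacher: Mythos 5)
Your proposal is correct and follows essentially the same route as the paper's proof: the same martingale-problem test functional, the same variance computation exploiting the martingale property and the independence of the driving Wiener processes (with the deviating player and the diagonal contributing $O(1/N)$), the same continuity argument at points of $\mathcal{Q}_{\nu,K}$ via assumption (C) and the equivalence $\theta_0\sim\Theta_{\nu}$ from Lemma~\ref{LemmaAppRegularity}, and the same reduction to a countable determining family of test data. The only cosmetic difference is your use of a Skorokhod representation where the paper invokes the extended mapping theorem directly.
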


\begin{proof}
Thanks to hypothesis (I), we have $\mu_{\omega}\circ (\hat{X}(0))^{-1} = \nu$ for $\Prb$-almost every $\omega\in \Omega$.

In the proof, we use the characterization of solutions to Eq.~\eqref{EqAppLimitDynamics} with fixed measure variable through a martingale problem in the sense of \citet{stroockvaradhan79}; also see \citet[Section 5.4]{karatzasshreve91}. Since the coefficients in \eqref{EqAppLimitDynamics} are bounded, we can employ a ``true'' martingale problem instead of a local martingale problem and work with test functions that have compact support. For a test function $g\in \mathbf{C}_{c}^{2}(\mathbb{R}^{d})$ and a measure $\theta\in \prbms{\mathcal{X}}$, define the process $M_{g}^{\theta}$ on $(\mathcal{X},\Borel{\mathcal{X}})$ by
\[
\begin{split}
	M_{g}^{\theta}(t,\phi)&\doteq g\bigl(\phi(t)\bigr) - g\bigl(\phi(0)\bigr) \\
	&- \int_{0}^{t} \left( \hat{b}(s,\phi,\theta)\cdot \nabla g + \frac{1}{2}\sum_{i,j=1}^{d} (\sigma\trans{\sigma})_{ij} \frac{\partial^{2} g}{\partial x_{i}\partial x_{j}} \right) \bigl(\phi(s)\bigr)ds.
\end{split}
\]

Recall that $(\mathcal{G}_{t})$ is the canonical filtration in $\Borel{\mathcal{X}}$ and that $\hat{X}$ is the coordinate process on $\mathcal{X}$. We have to show that, for $\Prb$-almost every $\omega\in \Omega$, $\mu_{\omega}$ solves the martingale problem associated with $\hat{b}(\cdot,\cdot,\mu_{\omega})$ and $\sigma\trans{\sigma}$, that is, for every test function $g\in \mathbf{C}_{c}^{2}(\mathbb{R}^{d})$, the process $M_{g}^{\mu_{\omega}}$ is a $(\mathcal{G}_{t})$-martingale under $\mu_{\omega}$. Although $(\mathcal{G}_{t})$ is a ``raw'' filtration (i.e., not necessarily right-continuous or $\mu_{\omega}$-complete), checking the martingale property with respect to $(\mathcal{G}_{t})$ is sufficient; see, for instance, Problem 5.4.13 in \citet[pp.\,318-319, 392]{karatzasshreve91}. Indeed, if $M_{g}^{\mu_{\omega}}$ is a $(\mathcal{G}_{t})$-martingale under $\mu_{\omega}$ for every $g\in \mathbf{C}_{c}^{2}(\mathbb{R}^{d})$, then $((\mathcal{X},\Borel{\mathcal{X}},(\mathcal{G}^{\mu_{\omega}}_{t+}),\mu_{\omega}),\hat{W},\hat{X})$ is a solution of Eq.~\eqref{EqAppLimitDynamics} with initial distribution $\nu$, where $(\mathcal{G}^{\mu_{\omega}}_{t+})$ indicates the right-continuous $\mu_{\omega}$-augmentation of $(\mathcal{G}_{t})$ and $\hat{W}$ is defined by
\[
	\hat{W}(t)\doteq \sigma^{-1}\left(\hat{X}(t) - \hat{X}(0) -  \int_{0}^{t} \hat{b}\left(s,\hat{X},\mu_{\omega}\right)ds \right),\quad t\in [0,T].
\]
Since $\mu_{\omega}\circ\hat{X}^{-1} = \mu_{\omega}$, it then follows that $\mu_{\omega}$ is a McKean-Vlasov solution of Eq.~\eqref{EqAppLimitDynamics} in the sense of Definition~\ref{DefAppMVSolution}. For this implication to hold, it suffices to take a countable collection of test functions $g\in \mathbf{C}_{c}^{2}(\mathbb{R}^{d})$ that approximate the $d$-variate monomials of first and second order.

Let $\theta\in \prbms{\mathcal{X}}$. The processes $M_{g}^{\theta}$ are, by construction and thanks to assumptions (B) and (M), bounded, measurable, and $(\mathcal{G}_{t})$-adapted.  The martingale property of $M_{g}^{\theta}$ is equivalent to having
\begin{equation} \label{EqAppProofMProp}
	\Mean_{\theta}\left[ \psi\cdot \left(M_{g}^{\theta}(t_{1}) - M_{g}^{\theta}(t_{0})\right) \right] = 0
\end{equation}
for every choice of $(t_{0},t_{1},\psi)\in [0,T]^{2}\times \mathbf{C}_{b}(\mathcal{X})$ such that $t_{0} \leq t_{1}$ and $\psi$ is $\mathcal{G}_{t_{0}}$-measurable. Since the $\sigma$-algebras $\mathcal{G}_{t}$ are countably generated, the processes $M_{g}^{\theta}$ have continuous trajectories, and since the test functions can be taken from a countable family, we can choose a countable collection of test parameters $\mathcal{T} \subset [0,T]^{2}\times \mathbf{C}_{b}(\mathcal{X})\times \mathbf{C}^{2}_{c}(\mathbb{R}^{d})$ with the following two properties: First, for every $(t_{0},t_{1},\psi,g)\in \mathcal{T}$ we have $t_{0} \leq t_{1}$ and $\psi$ is $\mathcal{G}_{t_{0}}$-measurable; second, if $\theta\in \prbms{\mathcal{X}}$ is such that \eqref{EqAppProofMProp} holds for every $(t_{0},t_{1},\psi,g)\in \mathcal{T}$, then $\theta$ is a McKean-Vlasov solution of Eq.~\eqref{EqAppLimitDynamics}. In the following three steps, we will show that there exists $\bar{\Omega}\in \mathcal{F}$ such that $\Prb(\bar{\Omega}) = 1$ and, for every $\omega\in \bar{\Omega}$, \eqref{EqAppProofMProp} with $\theta = \mu_{\omega}$ holds for all $(t_{0},t_{1},\psi,g)\in \mathcal{T}$.

\bigskip
\textbf{Step 1.}
Let $(t_{0},t_{1},\psi,g)\in \mathcal{T}$. Define a function $\Psi = \Psi_{(t_{0},t_{1},\psi,g)}\!: \prbms{\mathcal{X}} \rightarrow \mathbb{R}$ by
\[
	\Psi(\theta) = \Psi_{(t_{0},t_{1},\psi,g)}(\theta)\doteq \Mean_{\theta}\left[ \psi\cdot \left(M_{g}^{\theta}(t_{1}) - M_{g}^{\theta}(t_{0})\right) \right].
\]
Notice that $\Psi$ is well defined (the expectation on the right-hand side is finite), Borel measurable, and bounded. We claim that $\Psi$ is continuous at every $\theta\in \mathcal{Q}_{\nu,K}$. To see this, let $\theta\in \mathcal{Q}_{\nu,K}$ and $(\theta_{n})_{n\in \mathbb{N}} \subset \prbms{\mathcal{X}}$ be such that $\theta_{n}\to \theta$ as $n\to \infty$. Define bounded measurable functions $h_n , h : [0,T]\times \mathcal{X}\rightarrow \mathbb{R}$ according to
\begin{align*}
	h_{n}(s,\phi)&\doteq \psi(\phi)\cdot \hat{b}(s,\phi,\theta_{n})\cdot \left(\sum_{i=1}^{d} \frac{\partial g}{\partial x_{i}}\bigl(\phi(s)\bigr) \right), & n\in \mathbb{N},& \\
	h(s,\phi)&\doteq \psi(\phi)\cdot \hat{b}(s,\phi,\theta)\cdot \left(\sum_{i=1}^{d} \frac{\partial g}{\partial x_{i}}\bigl(\phi(s)\bigr) \right), &(s,\phi)\in [0,T]\times \mathcal{X}.&
\end{align*}
By hypothesis, $\theta_{n}\to \theta$ in the sense of weak convergence of probability measures. The functions $\psi$, $\sigma$ (constant) as well as $g$ together with its first and second order partial derivatives are all bounded and continuous. In order to establish the convergence of $\Psi(\theta_{n})$ to $\Psi(\theta)$ it is therefore enough to verify that
\[
	\int_{\mathcal{X}} \int_{t_{0}}^{t_{1}} h_{n}(s,\phi)ds\, \theta_{n}(d\phi) \stackrel{n\to\infty}{\longrightarrow} \int_{\mathcal{X}} \int_{t_{0}}^{t_{1}} h(s,\phi)ds\, \theta(d\phi).
\]
The functions $h$, $h_{n}$, $n\in \mathbb{N}$, are uniformly bounded. By dominated convergence and the Fubini-Tonelli theorem, it thus suffices to check that
\begin{equation} \label{EqAppProofInnerCont}
	\int_{\mathcal{X}} h_{n}(s,\phi)\, \theta_{n}(d\phi) \stackrel{n\to\infty}{\longrightarrow} \int_{\mathcal{X}} h(s,\phi)\, \theta(d\phi) \text{ for almost every } s\in [0,T].
\end{equation}
For $s\in [0,T]$, set
\[
	E_{s}\doteq \left\{ \phi\in \mathcal{X}\,:\, \exists (\phi_{n})\subset \mathcal{X}: h_{n}(s,\phi_{n}) \not\to h(s,\phi) \text{ while } \phi_{n} \to \phi\right\}.
\]
The functions $\psi$, $\frac{\partial g}{\partial x_{i}}$, $i\in \{1,\ldots,d\}$, are continuous and bounded. The choice of $\theta\in \mathcal{Q}_{\nu,K}$ entails that $\theta$ is absolutely continuous with respect to $\Theta_{\nu}$. By (C), the assumption of almost continuity, it follows that
\[
	\theta(E_{s}) = 0 \text{ for Lebesgue almost every } s\in [0,T].
\]
The extended mapping theorem \citep[Theorem~5.5 in][p.\,34]{billingsley68} now implies that \eqref{EqAppProofInnerCont} holds. It follows that $\Psi$ is continuous at $\theta\in \mathcal{Q}_{\nu,K}$.

\bigskip
\textbf{Step 2.}
Let again $(t_{0},t_{1},\psi,g)\in \mathcal{T}$. We are going to show that
\begin{equation} \label{EqAppProofVarZero}
	\Mean_{\Prb}\left[ \left( \Psi_{(t_{0},t_{1},\psi,g)}(\mu)\right)^{2} \right] = 0.
\end{equation}
Recall that $\Psi = \Psi_{(t_{0},t_{1},\psi,g)}$ is bounded and, by the previous step, continuous at every $\theta\in \mathcal{Q}_{\nu,K}$. By Lemma~\ref{LemmaAppTightness}, we have $\Prb\left(\mu \in \mathcal{Q}_{\nu,K}\right) = 1$. By hypothesis, $\mu^{n} \to \mu$ in distribution as $I\ni n \to \infty$. The mapping theorem \citep[Theorem~5.1 in][p.\,30]{billingsley68} thus implies that
\[
	\Mean_{\Prb}\left[ \left(\Psi(\mu)\right)^{2} \right] = \lim_{I\ni n\to\infty} \Mean_{\Prb_{n}}\left[ \left(\Psi(\mu^{n})\right)^{2} \right].
\]
Now, for every $n\in I$, using It{\^o}'s formula and Eq.~\eqref{EqAppPrelimitDynamics},
\begin{align*}
	&\Mean_{\Prb_{n}}\left[ \left(\Psi(\mu^{n})\right)^{2} \right] \\
	&= \Mean_{\Prb_{n}}\left[ \left(\frac{1}{n}\sum_{i=1}^{n} \psi(X^{n}_{i})\cdot \left(M_{g}^{\mu^{n}}(t_{1},X^{n}_{i}) - M_{g}^{\mu^{n}}(t_{0},X^{n}_{i})\right) \right)^{2} \right] \\
	\begin{split}
	&= \frac{1}{n^{2}} \Mean_{\Prb_{n}}\left[ \left(\psi(X^{n}_{1})\cdot \int_{t_{0}}^{t_{1}} \left(\tilde{b}_{n}(s,\boldsymbol{X}^{n},\mu^{n}) - \hat{b}(s,X^{n}_{1},\mu_{\omega}) \right)\cdot \nabla g\bigl(X^{n}_{1}(s)\bigr)ds \right.\right. \\
	&\qquad \left.\left. + \sum_{i=1}^{n} \psi(X^{n}_{i})\cdot \int_{t_{0}}^{t_{1}} \trans{\nabla g\bigl(X^{n}_{i}(s)\bigr)} \sigma dW^{n}_{i}(s) \right)^{2} \right].
	\end{split}
\end{align*}
The functions $\psi$, $\sigma$ (constant), $\nabla g$, $b$, and $\hat{b}_{n}$ are bounded, uniformly in $n\in I$; cf.\ assumption (B). Since $\psi$ is $\mathcal{G}_{t_{0}}$-measurable, the random variables $\psi(X^{n}_{1}),\ldots,\psi(X^{n}_{n})$ are $\mathcal{F}^{n}_{t_{0}}$-measurable. The Wiener processes $W^{n}_{1},\ldots,W^{n}_{n}$ are independent. Setting
\[
	\tilde{B}^{n}_{1}\doteq \int_{t_{0}}^{t_{1}} \left(\tilde{b}_{n}(s,\boldsymbol{X}^{n},\mu^{n}) - \hat{b}(s,X^{n}_{1},\mu_{\omega}) \right)\cdot \nabla g\bigl(X^{n}_{1}(s)\bigr)ds,
\]
we obtain
\begin{align*}
	\begin{split}
	&\Mean_{\Prb_{n}}\left[ \left(\Psi(\mu^{n})\right)^{2} \right] \\
	&= \frac{1}{n^{2}} \Mean_{\Prb_{n}}\left[ \left(\psi(X^{n}_{1})\cdot \tilde{B}^{n}_{1} \right)^{2} \right] \\
	&\quad + \frac{2}{n^{2}} \Mean_{\Prb_{n}}\left[ \sum_{i=1}^{n} \psi(X^{n}_{i})\cdot \psi(X^{n}_{1})\cdot \tilde{B}^{n}_{1} \cdot \int_{t_{0}}^{t_{1}} \trans{\nabla g\bigl(X^{n}_{i}(s)\bigr)} \sigma dW^{n}_{i}(s) \right] \\
	&\quad + \frac{1}{n^{2}} \Mean_{\Prb_{n}}\left[ \sum_{i=1}^{n} \left(\psi(X^{n}_{i})\right)^{2} \cdot \int_{t_{0}}^{t_{1}} \trans{\nabla g\bigl(X^{n}_{i}(s)\bigr)} \sigma\trans{\sigma}\nabla g\bigl(X^{n}_{i}(s)\bigr) ds \right]
	\end{split}\\
	&\stackrel{I\ni n\to \infty}{\longrightarrow} 0,
\end{align*}
where convergence to zero follows from the uniform boundedness of the integrands (and the Cauchy-Schwarz inequality).

\bigskip
\textbf{Step 3.}
By Step~2, we have that \eqref{EqAppProofVarZero} holds for every $\mathfrak{t} = (t_{0},t_{1},\psi,g)\in \mathcal{T}$. For every $\mathfrak{t}\in \mathcal{T}$, we can therefore select $A_{\mathfrak{t}}\in \mathcal{F}$ such that $\Prb(A_{\mathfrak{t}}) = 1$ and $\Psi(\mu_{\omega}) = 0$ for every $\omega\in A_{\mathfrak{t}}$. Set $\bar{\Omega}\doteq \bigcap_{\mathfrak{t}\in \mathcal{T}} A_{\mathfrak{t}}$. Then $\Prb(\bar{\Omega}) = 1$ since $\mathcal{T}$ is countable, and $\Psi(\mu_{\omega}) = 0$ for every $\omega\in \bar{\Omega}$. Hence, for $\Prb$-almost every $\omega\in \Omega$, $\mu_{\omega}$ is such that \eqref{EqAppProofMProp} with $\theta = \mu_{\omega}$ holds for every $(t_{0},t_{1},\psi,g)\in \mathcal{T}$.
\end{proof}


\section{Uniqueness of McKean-Vlasov solutions} \label{AppUniqueness}

Here, we consider McKean-Vlasov solutions of Eq.~\eqref{EqAppLimitDynamics} in the sense of Definition~\ref{DefAppMVSolution} and obtain a uniqueness result.

On $\prbms{\mathcal{X}}$ define the following distances derived from the total variation distance by setting, for $t\in [0,T]$,
\[
	\mathrm{d}_{t}(\theta,\tilde{\theta})\doteq \sup_{A\in \mathcal{G}_{t}} \left|\theta(A) - \tilde{\theta}(A) \right|,\quad \theta, \tilde{\theta}\in \prbms{\mathcal{X}}.
\]
Notice that $\mathrm{d}_{t}$ are pseudo-metrics for $t\in [0,T)$, while $\mathrm{d}_{T}$ is a true metric since $\mathcal{G}_{T} = \Borel{\mathcal{X}}$.

Let $K$ be the constant from assumption (B), $\nu\in \prbms{\mathbb{R}^{d}}$ the initial distribution. We will assume the following partial Lipschitz property with respect to the measure argument of $\hat{b}$:
\begin{itemize}
	\item[(L)] There exists a finite constant $L > 0$ such that for all $t\in [0,T]$, all $\phi\in \mathcal{X}$,
	\[
		\left| \hat{b}(t,\phi,\theta) - \hat{b}(t,\phi,\tilde{\theta}) \right| \leq L\cdot \mathrm{d}_{t}(\theta,\tilde{\theta}) \text{ whenever } \theta, \tilde{\theta} \in \mathcal{Q}_{\nu,K}.
	\]
\end{itemize}
Notice that condition (L) does not require $\hat{b}$ to be Lipschitz continuous or simply continuous with respect to the trajectory variable $\phi\in \mathcal{X}$. In fact, for the following uniqueness result to be valid, $\hat{b}$ need not satisfy (C), the assumption of almost continuity, either.

\begin{prop} \label{PropAppMVUnique}
Grant (ND), (M), (B), and (L). Then there exists at most one McKean-Vlasov solution of Eq.~\eqref{EqAppLimitDynamics} with initial distribution $\nu$.
\end{prop}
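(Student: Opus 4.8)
The plan is to compare the two solutions through their Radon--Nikodym densities against the driftless reference law $\Theta_\nu$ and to close a Gronwall estimate driven by the partial Lipschitz assumption (L). First I would observe that any McKean--Vlasov solution $\theta$ of Eq.~\eqref{EqAppLimitDynamics} lies in $\mathcal{Q}_{\nu,K}$: it is the law of a process of the form \eqref{EqBoundedControlDynamics} with progressively measurable drift $v(s)=\hat{b}(s,\hat X,\theta)$, bounded by $K$ thanks to (B). In particular, by Lemma~\ref{LemmaAppRegularity} every such $\theta$ is equivalent to $\Theta_\nu$, and by (L) the drifts evaluated at two solutions obey a Lipschitz bound in the pseudo-metrics $\mathrm{d}_t$. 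I may therefore realize everything on the canonical space $(\mathcal X,\Borel{\mathcal X})$ under $\Theta_\nu$, where $\hat W(t)\doteq\sigma^{-1}(\hat X(t)-\hat X(0))$ is a $\Theta_\nu$-Wiener process (using (ND)), and write, for $i=1,2$, $\beta_i(s)\doteq\sigma^{-1}\hat{b}(s,\hat X,\theta_i)$ and the Girsanov density $Z_i(t)\doteq\mathcal E\bigl(\int_0^\cdot \trans{\beta_i(s)}\,d\hat W(s)\bigr)(t)$. By uniqueness in law for the equation with a fixed measure, $Z_i(t)$ is exactly the density of $\theta_i|_{\mathcal G_t}$ with respect to $\Theta_\nu|_{\mathcal G_t}$.

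The key bookkeeping step is the identity $g(t)\doteq\mathrm{d}_t(\theta_1,\theta_2)=\tfrac12\,\Mean_{\Theta_\nu}\!\left[\,|Z_1(t)-Z_2(t)|\,\right]$, which follows because $Z_1(t)-Z_2(t)$ is $\mathcal G_t$-measurable with $\Theta_\nu$-mean zero, so the supremum defining $\mathrm{d}_t$ is attained on $\{Z_1(t)>Z_2(t)\}$. Next I would differentiate the densities: since $dZ_i=Z_i\,\trans{\beta_i}\,d\hat W$, the difference satisfies $Z_1(t)-Z_2(t)=\int_0^t \trans{(Z_1(s)\beta_1(s)-Z_2(s)\beta_2(s))}\,d\hat W(s)$, and the It\^o isometry gives $h(t)\doteq\Mean_{\Theta_\nu}[(Z_1(t)-Z_2(t))^2]=\Mean_{\Theta_\nu}\bigl[\int_0^t|Z_1(s)\beta_1(s)-Z_2(s)\beta_2(s)|^2\,ds\bigr]$. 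Splitting $Z_1\beta_1-Z_2\beta_2=(Z_1-Z_2)\beta_1+Z_2(\beta_1-\beta_2)$ and using the boundedness of $\beta_1$, assumption (L) in the form $|\beta_1(s)-\beta_2(s)|\le|\sigma^{-1}|L\,g(s)$, the Cauchy--Schwarz bound $g(s)^2\le\tfrac14 h(s)$, and the uniform bound $\Mean_{\Theta_\nu}[Z_2(s)^2]\le e^{|\sigma^{-1}|^2K^2T}$ (which also shows $h$ is finite), I would obtain $h(t)\le C\int_0^t h(s)\,ds$ for a finite constant $C$. Gronwall's lemma then forces $h\equiv0$, hence $g\equiv0$; since $\mathrm{d}_T$ is a genuine metric on $\prbms{\mathcal X}$, this yields $\theta_1=\theta_2$.

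The routine parts --- the Girsanov representation, the It\^o isometry, and the final Gronwall argument --- are standard once the setup is in place. I expect the main obstacle to be the interface between the probabilistic and the measure-theoretic descriptions: verifying carefully that each McKean--Vlasov solution genuinely belongs to $\mathcal{Q}_{\nu,K}$ (so that (L) is applicable to the self-consistent drifts, which is exactly why (L) is only required to hold on $\mathcal{Q}_{\nu,K}$), and establishing the identity $g(t)=\tfrac12\Mean_{\Theta_\nu}[|Z_1(t)-Z_2(t)|]$ together with the $\mathcal G_t$-measurability of $Z_i(t)$. Note that, in line with the remark following (L), almost continuity (C) plays no role here; only boundedness, the measure-Lipschitz bound, and non-degeneracy are used.
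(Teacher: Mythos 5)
Your proposal is correct and follows essentially the same route as the paper's proof: both arguments represent the two solutions through their Girsanov densities $Z_i(t) = \frac{d\theta_i}{d\Theta_\nu}\big|_{\mathcal G_t}$ relative to the driftless reference law, apply It\^o's isometry to the difference of the stochastic exponentials, invoke (L) together with the uniform second-moment bound on the densities, and close with Gronwall and the fact that $\mathrm{d}_T$ is a genuine metric. The only difference is organizational: you close a single Gronwall loop on $h(t)=\Mean_{\Theta_\nu}\left[(Z_1(t)-Z_2(t))^2\right]$ via the total-variation identity $\mathrm{d}_t(\theta_1,\theta_2)=\tfrac12\Mean_{\Theta_\nu}\left[|Z_1(t)-Z_2(t)|\right]$, whereas the paper first bounds $\Mean_{\Theta_\nu}\left[|Z_1(t)-Z_2(t)|^2\right]$ by $C\int_0^t \mathrm{d}_s(\theta_1,\theta_2)^2\,ds$ and then converts back to $\mathrm{d}_t$ using indicator test functionals before a second application of Gronwall.
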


\begin{proof}
Suppose $\theta, \tilde{\theta}\in \prbms{\mathcal{X}}$ are McKean-Vlasov solutions of Eq.~\eqref{EqAppLimitDynamics} with initial distribution $\nu = \theta\circ (\hat{X}(0))^{-1} = \tilde{\theta}\circ (\hat{X}(0))^{-1}$. We have to show that $\theta = \tilde{\theta}$.

Observe that $\theta$ and $\tilde{\theta}$, being McKean-Vlasov solutions of Eq.~\eqref{EqAppLimitDynamics} with initial distribution $\nu$, are elements of $\mathcal{Q}_{\nu,K}$. In particular, the Lipschitz property expressed in (L) applies to $\theta, \tilde{\theta}$.

\bigskip
\textbf{Step 1.}
Set $\gamma\doteq \Theta_{\nu}$ and, for $t\in [0,T]$,
\begin{align*}
	\begin{split}
	Y(t)&\doteq \exp\left( \int_{0}^{t} \left(\sigma\trans{\sigma}\right)^{-1} \hat{b}(s,\hat{X},\theta) \cdot d\hat{X}(s) \right.\\
	&\quad \left.- \frac{1}{2} \int_{0}^{t} \trans{\hat{b}(s,\hat{X},\theta)} \left(\sigma\trans{\sigma}\right)^{-1} \hat{b}(s,\hat{X},\theta) ds \right),
	\end{split}\\
	\begin{split}
	\tilde{Y}(t)&\doteq \exp\left( \int_{0}^{t} \left(\sigma\trans{\sigma}\right)^{-1} \hat{b}(s,\hat{X},\tilde{\theta}) \cdot d\hat{X}(s) \right.\\
	&\quad \left.- \frac{1}{2} \int_{0}^{t} \trans{\hat{b}(s,\hat{X},\tilde{\theta})} \left(\sigma\trans{\sigma}\right)^{-1} \hat{b}(s,\hat{X},\tilde{\theta})ds \right).
	\end{split}
\end{align*}

Then $Y$, $\tilde{Y}$ are well defined under $\gamma$ and, for every $t\in [0,T]$,
\begin{align} \label{EqAppDensities}
	& \frac{d\theta}{d\gamma}\Big | _{\mathcal{G}_{t}} = Y(t),& & \frac{d\tilde{\theta}}{d\gamma}\Big |_{\mathcal{G}_{t}} = \tilde{Y}(t).&
\end{align}

The equalities in \eqref{EqAppDensities} are a consequence of the Cameron-Martin-Girsanov formula; see Theorem~6.4.2 in \citet[p.\,154]{stroockvaradhan79}. For the sake of completeness, we derive \eqref{EqAppDensities} from a standard version of Girsanov's formula. First observe that $Y$, $\tilde{Y}$ are well defined if the stochastic integral that appears in the exponential is well defined. This is the case if we take $\gamma$ as our reference probability measure since $\hat{X}$ is a vector of continuous martingales under $\gamma$, while the integrands $\hat{b}(s,\hat{X},\theta)$ and $\hat{b}(s,\hat{X},\tilde{\theta})$, respectively, are bounded progressively measurable processes with respect to $(\mathcal{G}_{t})$ thanks to assumptions (M) and (B).	
	
As to the densities given in \eqref{EqAppDensities}, it is enough to prove the assertion for $\theta$, the proof for $\tilde{\theta}$ being completely analogous. Let $((\Omega,\mathcal{F},(\mathcal{F}_{t}),\Prb),W,X)$ be a solution of Eq.~\eqref{EqAppLimitDynamics} with initial distribution $\nu$ and measure $\theta$ such that $\Prb\circ X^{-1} = \theta$; such a solution exists by hypothesis.

Define a process $Z$ on $(\Omega,\mathcal{F})$ by setting
\[
	Z(t)\doteq e^{-\int_{0}^{t} \sigma^{-1}\hat{b}(s,X,\theta)\cdot dW(s) - \frac{1}{2} \int_{0}^{t} \left| \sigma^{-1}\hat{b}(s,X,\theta) \right|^{2}ds },\quad t\in [0,T].
\]
Then $Z$ is a martingale with respect to $\Prb$ and $(\mathcal{F}_{t})$, and
\[
	Q(A)\doteq \Mean_{\Prb}\left[Z(T) \mathbf{1}_{A}\right],\quad A\in \mathcal{F},
\]
defines a probability measure such that
\[
	\frac{dQ}{d\Prb}\Big |_{\mathcal{F}_{t}} = Z(t) \text{ for every }t\in [0,T].
\]
By Girsanov's Theorem \citep[Theorem~3.5.1][p.\,191]{karatzasshreve91} and Eq.~\eqref{EqAppLimitDynamics}, we have that
\[
	\sigma^{-1}\left( X(\cdot)-X(0)\right) = W(\cdot) + \int_{0}^{\cdot} \sigma^{-1}\hat{b}(s,X,\theta)ds
\]
is an $(\mathcal{F}_{t})$-Wiener process under $Q$. Thus, since $X(0)$ is $\mathcal{F}_{0}$-measurable, $X(0)$ and $X(\cdot)-X(0)$ are independent under $Q$. It follows that
\[
	Q\circ X^{-1} = \Prb\circ\left(X(0) + \sigma W\right)^{-1} = \gamma.
\]

Using again Eq.~\eqref{EqAppLimitDynamics}, we obtain with probability one under $\Prb$ as well as under $Q$ that for all $t\in [0,T]$,
\begin{align*}
	Z(t) &= e^{-\int_{0}^{t} \left(\sigma\trans{\sigma}\right)^{-1}\hat{b}(s,X,\theta)\cdot dX(s) + \frac{1}{2} \int_{0}^{t} \left|\sigma^{-1}\hat{b}(s,X,\theta)\right|^{2}ds }, \\
	\frac{1}{Z(t)} &= e^{ \int_{0}^{t} \left(\sigma\trans{\sigma}\right)^{-1}\hat{b}(s,X,\theta)\cdot dX(s) - \frac{1}{2} \int_{0}^{t} \trans{\hat{b}(s,X,\theta)} \left(\sigma\trans{\sigma}\right)^{-1} \hat{b}(s,X,\theta)ds }.
\end{align*} 
The process $X$ is a vector of continuous semimartingales with respect to $\Prb$ as well as $Q$, with quadratic covariation processes
\[
	\left\langle X_{i},X_{j} \right\rangle (t)
	= t\cdot \left(\sigma\trans{\sigma}\right)_{ij},\quad t\in [0,T], \text{ $\Prb$-/$Q$-almost surely.}
\]
It follows that $1/Z$ is a stochastic exponential, hence a local martingale, under $Q$. Since $\hat{b}$ is bounded, $1/Z$ is a true martingale under $Q$. As a consequence,
\[
	\frac{d\Prb}{dQ}\Big |_{\mathcal{F}_{t}} = \frac{1}{Z(t)} \text{ for every }t\in [0,T].
\]

Recall that $Q\circ X^{-1} = \gamma$. Comparing the expressions for $1/Z$ and $Y$, we find that
\[
	Q\circ \left(\frac{1}{Z(t)}, X \right)^{-1} = \gamma\circ \left(Y(t), \hat{X} \right)^{-1} \text{ for all } t\in [0,T].
\]
Since $\theta = \Prb\circ X^{-1}$, it follows that, for every $B\in \mathcal{G}_{t}$,
\[
	\theta(B) = \Mean_{\Prb}\left[ \mathbf{1}_{B}(X)\right] = \Mean_{Q}\left[ \frac{1}{Z(t)}\mathbf{1}_{B}(X)\right] = \Mean_{\gamma}\left[ Y(t) \mathbf{1}_{B}(\hat{X})\right],
\]
hence $\frac{d\theta}{d\gamma}\big |_{\mathcal{G}_{t}} = Y(t)$ for all $t\in [0,T]$.

\bigskip
\textbf{Step 2.}
We are going to show that there exists a constant $C\in (0,\infty)$ depending only on $T$, $K$, and $\sigma$, such that for every bounded and progressively measurable functional $\psi\!: [0,T]\times \mathcal{X} \rightarrow \mathbb{R}$, every $t\in [0,T]$,
\begin{equation*}
	\left| \Mean_{\theta}\left[ \psi(t,\hat{X})\right] - \Mean_{\tilde{\theta}}\left[ \psi(t,\hat{X})\right] \right|^{2} \leq C\cdot \|\psi\|_{\infty}^{2} \int_{0}^{t} \mathrm{d}_{s}(\theta,\tilde{\theta})^{2} ds.
\end{equation*}

Indeed, by \eqref{EqAppDensities}, for every $t\in [0,T]$,
\[
	\Mean_{\theta}\left[ \psi(t,\hat{X})\right] - \Mean_{\tilde{\theta}}\left[ \psi(t,\hat{X})\right] = \Mean_{\gamma}\left[ \left(Y(t) - \tilde{Y}(t)\right) \psi(t,\hat{X})\right]
\]
Under $\gamma$, $\hat{X}$ is a martingale with quadratic covariation processes
\[
	\left\langle X_{i},X_{j} \right\rangle (t)
	= t\cdot \left(\sigma\trans{\sigma}\right)_{ij},\quad t\in [0,T],
\]
while $Y$, $\tilde{Y}$ are stochastic exponentials. In fact,
setting
\begin{align*}
	M(t)&\doteq \int_{0}^{t} \left(\sigma\trans{\sigma}\right)^{-1} \hat{b}(s,\hat{X},\theta)\cdot d\hat{X}(s), \\
	\tilde{M}(t)&\doteq \int_{0}^{t} \left(\sigma\trans{\sigma}\right)^{-1} \hat{b}(s,\hat{X},\tilde{\theta})\cdot d\hat{X}(s),
\end{align*}
we have, with probability one under $\gamma$,
\begin{align*}
	& Y(t)= \exp\left( M(t) - \frac{1}{2}\langle M\rangle(t)\right), & 	& \tilde{Y}(t)= \exp\left( \tilde{M}(t) - \frac{1}{2}\langle \tilde{M}\rangle(t)\right). &
\end{align*}
Therefore (by It{\^{o}}'s formula), with probability one under $\gamma$,
\begin{align*}
	& Y(t)= 1 + \int_{0}^{t} Y(s)dM(s), &
	& \tilde{Y}(t)= 1 + \int_{0}^{t} \tilde{Y}(s)d\tilde{M}(s), & &t\in [0,T].&
\end{align*}
	
The invertibility of $\sigma$ and the boundedness assumption (B) imply that
\begin{equation} \label{EqAppUniqueSecondMoment}
	\sup_{t\in [0,T]} \left\{ \Mean_{\gamma}\left[ \left|Y(t)\right|^{2} \right] \vee \Mean_{\gamma}\left[ \left|\tilde{Y}(t) \right|^{2} \right] \right\} \leq e^{T\cdot K^{2} \left\| \sigma^{-1} \right\|^{2}} \doteq e^{T C_{0}},
\end{equation}
where $\|\cdot\|$ denotes the Hilbert-Schmidt matrix norm. The bound \eqref{EqAppUniqueSecondMoment} holds since
\[
	Y(t)^{2} = \underbrace{\exp\left( 2M(t) - \frac{1}{2}\langle 2M\rangle(t) \right)}_{\text{stochastic exponential under }\gamma} \exp\left( \langle M\rangle(t) \right),
\]
while $\sup_{t\in[0,T]}|\langle M\rangle(t)| \leq T\cdot C_{0}$ $\gamma$-almost surely by the Cauchy-Schwarz inequality; analogously for the tilde part.

Using It{\^o}'s isometry, \eqref{EqAppUniqueSecondMoment}, and assumption (L), we obtain, for every $t\in [0,T]$,
\begin{align*}
	&\Mean_{\gamma}\left[ \left|Y(t) - \tilde{Y}(t) \right|^{2} \right] \\
	&= \Mean_{\gamma}\left[ \left| \int_{0}^{t} \left(\sigma\trans{\sigma}\right)^{-1} \left(Y(s) \hat{b}(s,\hat{X},\theta) - \tilde{Y}(s) \hat{b}(s,\hat{X},\tilde{\theta})\right) \cdot d\hat{X}(s) \right|^{2} \right] \\
	&\leq \int_{0}^{t} \Mean_{\gamma}\left[ \left\| \sigma^{-1} \right\|^{2} \left|Y(s) \hat{b}(s,\hat{X},\theta) - \tilde{Y}(s) \hat{b}(s,\hat{X},\tilde{\theta}) \right|^{2} \right]ds \\
	&\leq 2\left\| \sigma^{-1} \right\|^{2} L^{2} e^{T C_{0}} \int_{0}^{t} \mathrm{d}_{s}(\theta,\tilde{\theta})^{2} ds + 2C_{0} \int_{0}^{t} \Mean_{\gamma}\left[ \left|Y(s) - \tilde{Y}(s) \right|^{2} \right]ds,
\end{align*}
hence, thanks to Gronwall's lemma,
\[
	\Mean_{\gamma}\left[ \left|Y(t) - \tilde{Y}(t) \right|^{2} \right] \leq 2\left\| \sigma^{-1} \right\|^{2} L^{2} e^{3T C_{0}} \int_{0}^{t} \mathrm{d}_{s}(\theta,\tilde{\theta})^{2} ds,\quad t\in [0,T].
\]
It follows that, for every $t\in [0,T]$, 
\begin{multline*}
	\left| \Mean_{\theta}\left[ \psi(t,\hat{X})\right] - \Mean_{\tilde{\theta}}\left[ \psi(t,\hat{X})\right] \right|^{2} \\
	\leq \Mean_{\gamma}\left[ \left|Y(t) - \tilde{Y}(t)\right|^{2} |\psi(t,\hat{X})|^{2} \right] \\
	\leq \|\psi\|_{\infty}^{2}\cdot  2\left\| \sigma^{-1} \right\|^{2} L^{2} e^{3T C_{0}} \int_{0}^{t} \mathrm{d}_{s}(\theta,\tilde{\theta})^{2} ds.
\end{multline*}

\bigskip
\textbf{Step 3.}
For $t\in [0,T]$, $A\in \mathcal{G}_{t}$, define $\psi_{(A,t)}\!: [0,T]\times \mathcal{X} \rightarrow \mathbb{R}$ through
\[
	\psi_{(A,t)}(s,\phi)\doteq \begin{cases}
	\mathbf{1}_{A}(\phi) &\text{if } s \geq t, \\
	0 &\text{otherwise.}
	\end{cases}
\]
Then $\psi_{(A,t)}$ is bounded and progressively measurable with $\|\psi_{(A,t)}\|_{\infty} = 1$. By Step 2, there exists a finite constant $C$, depending only on $T$, $K$, $\sigma$, such that for every $t\in [0,T]$,
\begin{multline*}
	\sup_{A\in \mathcal{G}_{t}}\left| \theta(A) - \tilde{\theta}(A)\right|^{2} \\
	= \sup_{A\in \mathcal{G}_{t}} \left| \Mean_{\theta}\left[ \psi_{(A,t)}(t,\hat{X})\right] - \Mean_{\tilde{\theta}}\left[ \psi_{(A,t)}(t,\hat{X})\right] \right|^{2} \\
	\leq C\cdot \int_{0}^{t} \mathrm{d}_{s}(\theta,\tilde{\theta})^{2} ds.
\end{multline*}
By the definition of the total variation semi-distances, it follows that
\[
	\mathrm{d}_{t}(\theta,\tilde{\theta})^{2} \leq C\cdot \int_{0}^{t} \mathrm{d}_{s}(\theta,\tilde{\theta})^{2} ds \text{ for all }t\in [0,T],
\]
hence, thanks to Gronwall's lemma, $\mathrm{d}_{T}(\theta,\tilde{\theta}) = 0$. Since $\mathrm{d}_{T}$ is a true metric, we obtain $\theta = \tilde{\theta}$.
\end{proof}


\section{Regularity results} \label{AppRegularity}

Here, we collect some (well known) regularity results on the exit time $\tau^{\hat{X}}$ with respect to measures in $\mathcal{Q}_{\nu,K}$. Recall that $\tau^{\hat{X}}$ is the time of first exit from $O$ on path space:
\[
	\tau^{\hat{X}}(\phi) = \tau(\phi) \doteq \inf\left\{ t\geq 0 : \phi(t)\notin O\right\},\quad \phi\in \mathcal{X},
\]
where $O$ is a bounded open set satisfying \Hypref{HypStateSpace}.

\begin{lemma} \label{LemmaAppRegularityCompact}
Let $K \geq 0$. Then $\mathcal{Q}_{\nu,K}$ is compact in $\prbms{\mathcal{X}}$.
\end{lemma}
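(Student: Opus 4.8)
The plan is to show that $\mathcal{Q}_{\nu,K}$ is both relatively compact and closed in $\prbms{\mathcal{X}}$; compactness then follows immediately. For relative compactness I would verify tightness and invoke Prohorov's theorem. Fix $\theta\in \mathcal{Q}_{\nu,K}$ and let $X(t) = \xi + \int_{0}^{t} v(s)\,ds + \sigma W(t)$ be a representing process as in \eqref{EqBoundedControlDynamics}, with $\Prb\circ \xi^{-1} = \nu$ and $\|v\|_{\infty}\leq K$. Since $\supp(\nu)\subset O$ and $O$ is bounded, $\xi$ takes values in the compact set $\cl(O)$, so the time-$0$ marginals of all $\theta\in \mathcal{Q}_{\nu,K}$ coincide with $\nu$ and are trivially tight. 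For the modulus of continuity, the bound $|X(t)-X(s)|\leq K|t-s| + |\sigma|\,|W(t)-W(s)|$ yields, for each $p>2$, an estimate $\Mean\bigl[|X(t)-X(s)|^{p}\bigr]\leq C_{p}|t-s|^{p/2}$ with $C_{p}$ depending only on $K$, $|\sigma|$, $p$ and $T$, \emph{uniformly} over $\theta\in \mathcal{Q}_{\nu,K}$. Applying the Kolmogorov--Chentsov tightness criterion exactly as in the first part of the proof of Lemma~\ref{LemmaAppTightness} gives a single tightness estimate valid for the entire family, so $\mathcal{Q}_{\nu,K}$ is relatively compact by Prohorov's theorem.

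Next I would prove closedness. Let $(\theta_{n})\subset \mathcal{Q}_{\nu,K}$ with $\theta_{n}\to \theta$ in $\prbms{\mathcal{X}}$, and let $X_{n}(t) = \xi_{n} + \int_{0}^{t} v_{n}(s)\,ds + \sigma W_{n}(t)$ represent $\theta_{n}$ on a filtered probability space $(\Omega_{n},\mathcal{F}_{n},(\mathcal{F}^{n}_{t}),\Prb_{n})$, with $\|v_{n}\|_{\infty}\leq K$. Interpreting the drifts as relaxed controls as in the second part of the proof of Lemma~\ref{LemmaAppTightness}, let $\mathcal{R}\doteq \mathcal{R}_{B_{K}(0)}$ and let $\rho_{n}$ be the $\mathcal{R}$-valued random variable determined by $\rho_{n,\omega}(B\times I)\doteq \int_{I}\delta_{v_{n}(s,\omega)}(B)\,ds$. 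Since $\mathcal{R}$ is compact and $(\theta_{n})$ is tight, the laws $\bigl(\Prb_{n}\circ (X_{n},\rho_{n})^{-1}\bigr)$ are tight in $\prbms{\mathcal{X}\times \mathcal{R}}$; I would pass to a subsequence converging to some limit $\Xi\in \prbms{\mathcal{X}\times \mathcal{R}}$, whose projection onto the first coordinate is $\theta$.

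On the canonical space $\mathcal{X}\times \mathcal{R}$ carrying $\Xi$, with $(\hat{X},\hat{\rho})$ the coordinate process, I would set $\hat{W}(t)\doteq \sigma^{-1}\bigl(\hat{X}(t) - \hat{X}(0) - \int_{B_{K}(0)\times [0,t]} y\,\hat{\rho}(dy,ds)\bigr)$ and check, by the same martingale argument used in Lemmata~\ref{LemmaAppTightness} and \ref{LemmaAppLimitSolutions}, that $\hat{W}$ is a standard Wiener process under $\Xi$ with respect to the natural filtration. Setting $v(t)\doteq \int_{B_{K}(0)} y\,\dot{\hat{\rho}}_{t}(dy)$ gives a progressively measurable process, and since $\dot{\hat{\rho}}_{t}$ is a probability measure on $B_{K}(0)$ we get $|v(t)|\leq \int_{B_{K}(0)}|y|\,\dot{\hat{\rho}}_{t}(dy)\leq K$ by Jensen's inequality. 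Thus $\hat{X}(t) = \hat{X}(0) + \int_{0}^{t} v(s)\,ds + \sigma \hat{W}(t)$ with $\|v\|_{\infty}\leq K$, while $\hat{X}(0)$ has law $\nu$ because the evaluation $\phi\mapsto \phi(0)$ is continuous on $\mathcal{X}$, so the time-$0$ marginal passes to the weak limit and equals $\nu$. Hence $\theta = \Xi\circ \hat{X}^{-1}\in \mathcal{Q}_{\nu,K}$, and $\mathcal{Q}_{\nu,K}$, being relatively compact and closed, is compact.

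The tightness step is routine; the main obstacle is the closedness step, and within it the identification of $\hat{W}$ as a genuine Wiener process under the weak limit $\Xi$. This requires passing to the limit in the defining martingale relations, relying on the continuity of the relaxed-control drift map $\rho\mapsto \int_{B_{K}(0)\times [0,\cdot]} y\,\rho(dy,ds)$ and the compactness of $\mathcal{R}$, precisely as in the proof of Lemma~\ref{LemmaAppTightness}. Once $\hat{W}$ is identified, the effective-drift bound $|v|\leq K$ is immediate, so no chattering-type approximation is needed here.
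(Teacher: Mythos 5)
Your proposal is correct and follows essentially the same route as the paper: precompactness via the uniform drift bound and the Kolmogorov--Chentsov tightness criterion, then closedness by reinterpreting the drifts as relaxed controls in $\mathcal{R}_{B_{K}(0)}$ and passing to the limit exactly as in the second part of the proof of Lemma~\ref{LemmaAppTightness}. You merely spell out the details (the martingale identification of $\hat{W}$, the Jensen bound $|v|\leq K$, the time-$0$ marginal) that the paper leaves as references to that earlier argument.
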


\begin{proof}
Recall that $\mathcal{Q}_{\nu,K}$ is the set of all laws of processes of the form
\[
	X(t) = \xi + \int_{0}^{t}v(s)ds + \sigma W(t),\quad t\in [0,T],
\]
where $W$ is an $\mathbb{R}^{d}$-valued $(\mathcal{F}_{t})$-Wiener process defined on some filtered probability space $(\Omega,\mathcal{F},(\mathcal{F}_{t}),\Prb)$, $\xi$ is an $\mathbb{R}^{d}$-valued $\mathcal{F}_{0}$-measurable random variable with distribution $\Prb\circ \xi^{-1} = \nu$, and $v$ is an $\mathbb{R}^{d}$-valued $(\mathcal{F}_{t})$-progressively measurable bounded process with $\|v\|_{\infty} \leq K$. By the boundedness of the control processes $v$ and the Kolmogorov-Chentsov tightness criterion \citep[for instance, Corollary~16.9 in][p.\,313]{kallenberg01}, we have that $\mathcal{Q}_{\nu,K}$ is tight, hence precompact in $\prbms{\mathcal{X}}$. By interpreting the control processes $v$ as relaxed controls, using arguments analogous to those of the second part of the proof of Lemma~\ref{LemmaAppTightness}, one checks that $\mathcal{Q}_{\nu,K}$ coincides with its own closure. It follows that $\mathcal{Q}_{\nu,K}$ is compact.
\end{proof}

\begin{lemma} \label{LemmaAppRegularity}
Let $K > 0$. Any measure $\theta\in \mathcal{Q}_{\nu,K}$ is equivalent to $\Theta_{\nu}$. Moreover, there exists a strictly positive constant $c_{0}$ depending only on $d$, $\nu$, $T$, and $K$, such that
\[
	\inf_{\theta\in \mathcal{Q}_{\nu,K}} \Mean_{\theta}\left[ \mathbf{1}_{[0,\tau^{\hat{X}})}(T) \right] \geq c_{0} >0.
\]
\end{lemma}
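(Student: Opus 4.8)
The plan is to compare every $\theta\in\mathcal{Q}_{\nu,K}$ with the single reference measure $\Theta_{\nu}$ by removing the drift via Girsanov's theorem, thereby reducing both assertions to properties of $\Theta_{\nu}$ alone. First I would fix $\theta\in\mathcal{Q}_{\nu,K}$ and realize it as $\theta=\Prb\circ X^{-1}$ with $X(t)=\xi+\int_0^t v(s)\,ds+\sigma W(t)$ on some $(\Omega,\mathcal{F},(\mathcal{F}_t),\Prb)$, where $\|v\|_{\infty}\le K$. Since $\sigma^{-1}v$ is bounded, Novikov's criterion makes $Z_T\doteq\mathcal{E}\bigl(-\int_0^{\cdot}\sigma^{-1}v(s)\,dW(s)\bigr)_T$ a true martingale, so $\frac{d\mathbb{Q}}{d\Prb}\doteq Z_T$ defines a probability measure $\mathbb{Q}\sim\Prb$. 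Under $\mathbb{Q}$ the process $\tilde W\doteq W+\int_0^{\cdot}\sigma^{-1}v\,ds$ is an $(\mathcal{F}_t)$-Wiener process and $X=\xi+\sigma\tilde W$; because $\xi$ is $\mathcal{F}_0$-measurable and $Z$ is a martingale with $Z_0=1$, the law of $\xi$ is unchanged ($\Mean_{\Prb}[Z_T f(\xi)]=\Mean_{\Prb}[f(\xi)]$), while $\tilde W$, being an $(\mathcal{F}_t)$-Brownian motion, is independent of $\mathcal{F}_0\ni\xi$. Hence $\Law_{\mathbb{Q}}(X)=\Theta_{\nu}$. The equivalence $\theta\sim\Theta_{\nu}$ then follows at once: since $\theta=\Prb\circ X^{-1}$, $\Theta_{\nu}=\mathbb{Q}\circ X^{-1}$, and $\Prb\sim\mathbb{Q}$, a Borel set $C\subset\mathcal{X}$ is $\theta$-null iff $\{X\in C\}$ is $\Prb$-null iff it is $\mathbb{Q}$-null iff it is $\Theta_{\nu}$-null.

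For the uniform lower bound I would first record that $p_{0}\doteq\Theta_{\nu}(\tau^{\hat X}>T)>0$. Indeed $\supp(\nu)$ is a compact subset of the open set $O$, so $\delta_{0}\doteq\dist\bigl(\supp(\nu),\mathbb{R}^{d}\setminus O\bigr)>0$; on the event $\{\sup_{t\in[0,T]}|\sigma W(t)|<\delta_{0}\}$ one has $|(\xi+\sigma W(t))-\xi|<\delta_{0}$, so $\xi+\sigma W(t)\in B(\xi,\delta_{0})\subset O$ for all $t$, whence $\tau^{\hat X}>T$. Consequently $p_{0}\ge\Prb\bigl(\sup_{t\in[0,T]}|\sigma W(t)|<\delta_{0}\bigr)>0$, as $\sigma W$ is a non-degenerate Brownian motion. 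Crucially, $p_{0}$ depends only on $d,\nu,\sigma,O,T$ and not on $\theta$.

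Finally, writing $A\doteq\{\tau^X>T\}\in\mathcal{F}$ so that $\Mean_{\theta}[\mathbf 1_{[0,\tau^{\hat X})}(T)]=\Prb(A)$ and $\mathbb{Q}(A)=\Theta_{\nu}(\tau^{\hat X}>T)=p_{0}$, I would apply Cauchy--Schwarz in the form
\[
	p_{0}=\mathbb{Q}(A)=\Mean_{\Prb}\bigl[Z_T\,\mathbf 1_A\bigr]\le\bigl(\Mean_{\Prb}[Z_T^{2}]\bigr)^{1/2}\,\Prb(A)^{1/2}.
\]
Since $Z_T^{2}=\mathcal{E}\bigl(-2\int_0^{\cdot}\sigma^{-1}v\,dW\bigr)_T\exp\bigl(\int_0^{T}|\sigma^{-1}v|^{2}\,ds\bigr)$, the stochastic exponential has $\Prb$-mean one and $\int_0^{T}|\sigma^{-1}v|^{2}\,ds\le TK^{2}\|\sigma^{-1}\|^{2}\doteq C_{1}$, giving $\Mean_{\Prb}[Z_T^{2}]\le e^{C_{1}}$. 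Therefore
\[
	\theta(\tau^{\hat X}>T)=\Prb(A)\ge p_{0}^{2}\,e^{-C_{1}}\doteq c_{0}>0,
\]
and taking the infimum over $\mathcal{Q}_{\nu,K}$ yields the claim. The only point requiring care is uniformity: both $p_{0}$ and the second-moment bound $e^{C_{1}}$ are controlled purely through $K$ and the fixed data $\nu,\sigma,O,T$, so $c_{0}$ is independent of the particular drift $v$. I expect the main obstacle to be establishing $p_{0}>0$ together with this uniformity simultaneously, which is exactly what the comparison against the driftless reference $\Theta_{\nu}$ (via the compactness of $\supp(\nu)$ inside $O$) is designed to achieve.
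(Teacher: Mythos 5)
Your proof is correct and follows essentially the same route as the paper's: a Girsanov change of measure to the driftless reference law $\Theta_{\nu}$, a strictly positive lower bound on $\Theta_{\nu}(\tau^{\hat{X}}>T)$, and a Cauchy--Schwarz argument combined with a uniform second-moment bound $\Mean_{\Prb}[Z_T^2]\leq e^{TK^2\|\sigma^{-1}\|^2}$ on the density. The only (cosmetic) differences are that you apply Cauchy--Schwarz under $\Prb$ with $Z_T$ where the paper applies H\"older under $Q$ with $1/Z(T)$, and you make the positivity of $\Theta_{\nu}(\tau^{\hat{X}}>T)$ explicit via $\dist(\supp(\nu),\mathbb{R}^d\setminus O)>0$ where the paper simply invokes that Brownian motion stays in a ball with positive probability.
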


\begin{proof}
Let $\theta\in \mathcal{Q}_{\nu,K}$. Let $(\Omega,\mathcal{F},(\mathcal{F}_{t}),\Prb)$ be a filtered probability space carrying an $\mathbb{R}^{d}$-valued $(\mathcal{F}_{t})$-Wiener process $W$, an $\mathbb{R}^{d}$-valued $\mathcal{F}_{0}$-measurable random variable $\xi$ with $\Prb\circ \xi^{-1} = \nu$, and an $\mathbb{R}^{d}$-valued $(\mathcal{F}_{t})$-progressively measurable bounded process $v$ with $\|v\|_{\infty} \leq K$ such that the process
\[
	X(t) \doteq \xi + \int_{0}^{t}v(s)ds + \sigma W(t),\quad t\in [0,T],
\]
has law $\Prb\circ X^{-1} = \theta$. Set
\[
	B(t)\doteq \xi + \sigma W(t),\quad t\in [0,T].
\]

Define a process $Z$ on $(\Omega,\mathcal{F})$ by setting
\[
	Z(t)\doteq e^{-\int_{0}^{t} \sigma^{-1}v(s)\cdot dW(s) - \frac{1}{2} \int_{0}^{t} \left| \sigma^{-1}v(s) \right|^{2}ds },\quad t\in [0,T].
\]
Then $Z$ is a martingale with respect to $\Prb$ and $(\mathcal{F}_{t})$, and
\[
	Q(A)\doteq \Mean_{\Prb}\left[Z(T) \mathbf{1}_{A}\right],\quad A\in \mathcal{F},
\]
defines a probability measure such that
\[
	\frac{dQ}{d\Prb}_{|\mathcal{F}_{t}} = Z(t) \text{ for every }t\in [0,T].
\]
By using Girsanov's theorem as in the first step of the proof of Proposition~\ref{PropAppMVUnique} and thanks to the boundedness of $v$, we find that
\[
	\tilde{W}\doteq \sigma^{-1}\left(X(\cdot) - \xi \right) = W(\cdot) + \int_{0}^{\cdot} \sigma^{-1}v(s)ds
\]
is an $(\mathcal{F}_{t})$-Wiener process under $Q$,
\[
	Q\circ X^{-1} = \Prb\circ \left(\xi + \sigma W \right)^{-1} = \Prb\circ B^{-1},
\]
and
\[
	\frac{d\Prb}{dQ}_{|\mathcal{F}_{t}} = \frac{1}{Z(t)} \text{ for every }t\in [0,T].
\]

Since a $d$-dimensional Brownian motion stays in an open ball for any fixed finite time with strictly positive probability, there exists a constant $c_{0} > 0$ depending only on $d$, $\nu$, and $T$, such that
\[
	\inf_{t\in [0,T]} \Mean_{\Prb}\left[ \mathbf{1}_{[0,\tau^{B})}(t) \right] = \Mean_{\Prb}\left[ \mathbf{1}_{[0,\tau^{B})}(T) \right] \geq c_{0} > 0.
\]
Using H{\"o}lder's inequality, we find that 
\begin{multline*}
	\Mean_{\theta}\left[ \mathbf{1}_{[0,\tau^{\hat{X}})}(T) \right]
	= \Mean_{\Prb}\left[ \mathbf{1}_{[0,\tau^{X})}(T) \right]
	= \Mean_{Q}\left[ \frac{1}{Z(T)}\cdot \mathbf{1}_{[0,\tau^{X})}(T) \right] \\
	\geq \Mean_{Q}\left[ \sqrt{\mathbf{1}_{[0,\tau^{X})}(T)} \right]^{2}  \Mean_{Q}\left[ Z(T) \right]^{-1}
	\\
	= \Mean_{P}\left[ \mathbf{1}_{[0,\tau^{B})}(T) \right]^{2}  \Mean_{Q}\left[ Z(T) \right]^{-1} \geq \frac{c_{0}^{2}}{\Mean_{Q}\left[ Z(T) \right]}.
\end{multline*}

Now,
\[
	Z(T) = \underbrace{e^{-\int_{0}^{T} \sigma^{-1}v(s)\cdot d\tilde{W}(s) - \frac{1}{2} \int_{0}^{T} \left| \sigma^{-1}v(s) \right|^{2}ds} }_{\text{corresponds to a $Q$-stochastic exponential}} \cdot e^{\int_{0}^{T} \left| \sigma^{-1}v(s) \right|^{2}ds},
\]
hence
\[
	\frac{1}{\Mean_{Q}\left[ Z(T) \right]} \geq e^{-T\cdot \|\sigma^{-1}\|^{2}\cdot \|v\|_{\infty}^{2}}.
\]
\end{proof}

\begin{lemma} \label{LemmaAppRegularity2}
Let $K > 0$, and  let $\theta\in \mathcal{Q}_{\nu,K}$. Then the following hold:
\begin{enumerate}[(a)]

	\item \label{LemmaAppRegularity2Finite} $\tau^{\hat{X}} < \infty$ $\theta$-almost surely;
	
	\item \label{LemmaAppRegularity2Cont} the mapping $\mathcal{X}\ni \phi \mapsto \tau^{\hat{X}}(\phi) \in [0,\infty]$ is continuous $\theta$-almost surely;
	
	\item \label{LemmaAppRegularity2StochCont} $\theta\left(\tau^{\hat{X}} = t \right) = 0$ for every $t\geq 0$;
	
	\item \label{LemmaAppRegularity2IndCont} the mapping $\mathcal{X}\ni \phi \mapsto \mathbf{1}_{[0,\tau^{\hat{X}}(\phi))}(t) \in \mathbb{R}$ is continuous $\theta$-almost surely for every $t\geq 0$;
	
\end{enumerate}
\end{lemma}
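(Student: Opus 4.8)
The plan is to reduce all four statements to the single reference measure $\Theta_{\nu}$ and then exploit the non-degeneracy of $\sigma$ together with the $C^{2}$-regularity of $\partial O$ granted by~\Hypref{HypStateSpace}. By Lemma~\ref{LemmaAppRegularity} every $\theta\in\mathcal{Q}_{\nu,K}$ is equivalent to $\Theta_{\nu}$, and equivalent measures have the same null sets; hence an almost-sure property holds for all $\theta\in\mathcal{Q}_{\nu,K}$ as soon as it holds $\Theta_{\nu}$-almost surely. It therefore suffices to argue under $\theta=\Theta_{\nu}$, where the canonical process reads $\hat{X}(t)=\xi+\sigma W(t)$ with $\xi\sim\nu$, $\supp(\nu)\subset O$, and $W$ a standard Wiener process. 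Since $\sigma$ is invertible by~\hypref{HypAddND}, $\hat{X}$ is a genuinely $d$-dimensional, non-degenerate process, and in particular $\hat{X}(t)$ has an absolutely continuous law for every $t>0$.

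Parts~\eqref{LemmaAppRegularity2Finite} and~\eqref{LemmaAppRegularity2StochCont} are the soft ones. For~\eqref{LemmaAppRegularity2Finite} I would use that a non-degenerate process exits the bounded set $O$ in almost surely finite time: since $\hat{X}(t\wedge\tau^{\hat X})-\hat{X}(0)=\sigma W(t\wedge\tau^{\hat X})$ stays at distance at most $\mathrm{diam}(\bar{O})$ and $|\sigma W(s)|^{2}-\mathrm{Tr}(\sigma\trans{\sigma})\,s$ is a martingale, optional stopping gives $\mathrm{Tr}(\sigma\trans{\sigma})\,\Mean_{\Theta_{\nu}}[t\wedge\tau^{\hat X}]\le \mathrm{diam}(\bar{O})^{2}$, whence $\Mean_{\Theta_{\nu}}[\tau^{\hat X}]<\infty$ and $\tau^{\hat X}<\infty$ almost surely. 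For~\eqref{LemmaAppRegularity2StochCont}, fix $t$. If $\phi$ is continuous with $\tau^{\hat X}(\phi)=t<\infty$ then $\phi(s)\in O$ for $s<t$ and $\phi(t)\in O^{c}$, so $\phi(t)\in\partial O$; thus $\{\tau^{\hat X}=t\}\subset\{\hat X(t)\in\partial O\}$. As $\hat X(t)$ has an absolutely continuous law for $t>0$ while $\partial O$ is a $C^{2}$-hypersurface and hence Lebesgue-null, $\Theta_{\nu}(\hat X(t)\in\partial O)=0$; for $t=0$ the claim holds because $\xi\in\supp(\nu)\subset O$ with $O$ open.

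The substantive part is~\eqref{LemmaAppRegularity2Cont}, and this is where I expect the main obstacle. I would compare $\tau^{\hat X}$, the hitting time of the closed set $O^{c}$, with $\tau_{\mathrm{ext}}(\phi)\doteq\inf\{t:\phi(t)\in\mathbb{R}^{d}\setminus\bar{O}\}$, the hitting time of the open exterior. In the uniform topology $\phi\mapsto H_{C}(\phi)$ is lower semicontinuous for closed $C$ and upper semicontinuous for open $C$, so $\tau^{\hat X}$ is l.s.c.\ and $\tau_{\mathrm{ext}}$ is u.s.c.; since always $\tau^{\hat X}\le\tau_{\mathrm{ext}}$, a squeezing argument shows that $\tau^{\hat X}$ is continuous at every $\phi$ with $\tau^{\hat X}(\phi)=\tau_{\mathrm{ext}}(\phi)$, and continuity on $\{\tau^{\hat X}=\infty\}$ is immediate since this event is open in $\mathcal{X}$. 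Thus~\eqref{LemmaAppRegularity2Cont} reduces to showing $\Theta_{\nu}(\tau^{\hat X}=\tau_{\mathrm{ext}})=1$, i.e.\ that upon hitting $\partial O$ the process instantaneously enters $\mathbb{R}^{d}\setminus\bar{O}$. This is the crux: the $C^{2}$-regularity of $\partial O$ furnishes an exterior tangent ball at every boundary point, and a non-degenerate Brownian motion started on the boundary enters such a ball immediately (the one-dimensional fact that Brownian motion changes sign in every right neighbourhood of the origin). Applying the strong Markov property at the stopping time $\tau^{\hat X}$, on $\{\tau^{\hat X}<T\}$ the shifted path meets $\mathbb{R}^{d}\setminus\bar{O}$ at once, so $\tau_{\mathrm{ext}}=\tau^{\hat X}$ almost surely; the boundary case $\{\tau^{\hat X}=T\}$ is $\Theta_{\nu}$-null by part~\eqref{LemmaAppRegularity2StochCont}. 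Making the ``regularity of boundary points'' precise is the only real work.

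Finally, part~\eqref{LemmaAppRegularity2IndCont} follows as a corollary. Writing $\mathbf{1}_{[0,\tau^{\hat X}(\phi))}(t)=\mathbf{1}_{\{\tau^{\hat X}(\phi)>t\}}$, this indicator is continuous at any $\phi$ at which $\tau^{\hat X}$ is continuous and $\tau^{\hat X}(\phi)\neq t$: if $\phi_{n}\to\phi$ then $\tau^{\hat X}(\phi_{n})\to\tau^{\hat X}(\phi)$, and the strict inequality between $\tau^{\hat X}(\phi)$ and $t$ is inherited for large $n$. By part~\eqref{LemmaAppRegularity2Cont} the discontinuity set of $\tau^{\hat X}$ is $\Theta_{\nu}$-null, and by part~\eqref{LemmaAppRegularity2StochCont} the event $\{\tau^{\hat X}=t\}$ is $\Theta_{\nu}$-null; on the complement of their union the indicator is continuous, which yields~\eqref{LemmaAppRegularity2IndCont} and, via the equivalence of measures, the same conclusion for every $\theta\in\mathcal{Q}_{\nu,K}$.
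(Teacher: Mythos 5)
Your proposal is correct and follows the same overall strategy as the paper: reduce everything to the reference measure $\Theta_{\nu}$ via the equivalence from Lemma~\ref{LemmaAppRegularity}, prove part~\eqref{LemmaAppRegularity2StochCont} from $\{\tau^{\hat{X}}=t\}\subset\{\hat{X}(t)\in\partial O\}$ together with the absolute continuity of the law of $\hat{X}(t)$ and the Lebesgue-nullity of the $C^{2}$ boundary, and deduce part~\eqref{LemmaAppRegularity2IndCont} from parts~\eqref{LemmaAppRegularity2Cont} and~\eqref{LemmaAppRegularity2StochCont} --- these three pieces coincide with the paper's argument. The only places you deviate are in the tools for the first two parts, and both substitutions are sound: for~\eqref{LemmaAppRegularity2Finite} you use optional stopping on $|\sigma W(s)|^{2}-\mathrm{Tr}(\sigma\trans{\sigma})s$ to get $\Mean[\tau^{\hat X}]<\infty$, where the paper invokes the law of the iterated logarithm at infinity; your version is more quantitative and equally elementary. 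For~\eqref{LemmaAppRegularity2Cont} you make explicit what the paper compresses into a citation of Kushner--Dupuis: the semicontinuity squeeze between the entry time of the closed set $O^{c}$ and that of the open set $\mathbb{R}^{d}\setminus\bar{O}$, reducing everything to the a.s.\ identity of the two times, which in turn rests on regularity of boundary points. One small caution there: the parenthetical ``one-dimensional sign change'' only gives immediate crossing of the tangent hyperplane, not immediate entry into the tangent exterior ball; you need the standard observation that a ball tangent at $x_{0}$ contains a truncated cone with vertex $x_{0}$ (Zaremba's cone condition), or the LIL comparison of $\sqrt{t\log\log(1/t)}$ against the quadratic deviation of the sphere from its tangent plane --- but you correctly flag this as the step requiring care, and it is exactly the content of the reference the paper cites.
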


\begin{proof}
Since $\theta$ is equivalent to $\Theta_{\nu}$ by Lemma~\ref{LemmaAppRegularity}, it is enough to check the above properties for $\theta = \Theta_{\nu}$.
	
Property~\eqref{LemmaAppRegularity2Finite} is a consequence of the law of the iterated logarithm (as time tends to infinity), the non-degeneracy of $\sigma$, and the fact that $O$ is bounded. Property~\eqref{LemmaAppRegularity2Cont} follows again from the law of the iterated logarithm, now as time goes to zero, the non-degeneracy of $\sigma$ and the fact that $O$ is open with smooth boundary; cf., for instance, \citet[pp.\,260-261]{kushnerdupuis01}.
	
Property \eqref{LemmaAppRegularity2IndCont} is a consequence of properties \eqref{LemmaAppRegularity2Cont} and \eqref{LemmaAppRegularity2StochCont}. As to \eqref{LemmaAppRegularity2StochCont}, by continuity of trajectories,
\[
	\Theta_{\nu}\left(\tau^{\hat{X}} = t \right) \leq \Theta_{\nu}\left( \hat{X}(t)\in \partial O\right) \text{ for every } t \geq 0.
\]
The boundary $\partial O$ has Lebesgue measure zero as it is bounded and of class $\boldsymbol{C}^{2}$ by hypothesis. By the non-degeneracy of $\sigma$, $\hat{X}(t)$ is absolutely continuous w.r.t.\ Lebesgue measure, hence $\Theta_{\nu}( \hat{X}(t)\in \partial O) = 0$. It follows that $\Theta_{\nu}(\tau^{\hat{X}} = t ) = 0$.

\end{proof}


\section{Assumptions (C) and (L)} \label{AppAssumptionsCL}

Throughout this section, we assume that $\hat{b}$ is defined by \eqref{ExProofNashDrift}, that is, for $(t,\phi,\theta)\in [0,T]\times \mathcal{X}\times \prbms{\mathcal{X}}$,
\[
	\hat{b}(t,\phi,\theta) = \begin{cases}
	u(t,\phi) + \bar{b}\left(t, \phi(t), \frac{\int w(\tilde{\phi}(t)) \mathbf{1}_{[0,\tau(\tilde{\phi}))}(t) \theta(d\tilde{\phi})}{\int \mathbf{1}_{[0,\tau(\tilde{\phi}))}(t) \theta(d\tilde{\phi})} \right)
	&\text{if } \theta(\tau > t) > 0,\\
	u(t,\phi) + \bar{b}\left(t,\phi(t),w(0)\right) &\text{if } \theta(\tau > t) = 0,
\end{cases}
\]
where $w$ is bounded continuous according to hypotheses \Hypref{HypMeasBounded} and \Hypref{HypCont}, $\bar{b}$ is bounded measurable and Lipschitz in its third variable according to \Hypref{HypMeasBounded} and \Hypref{HypLipschitz}, $\bar{b}(t,\cdot,\cdot)$ is continuous uniformly in $t\in [0,T]$ thanks to \Hypref{HypCont}, and $u$ is a feedback strategy such that, for Lebesgue-almost every $t\in [0,T]$,
\[
	\Theta_{\nu} \left(\left\{ \phi\in \mathcal{X} \;:\; u(t,\cdot) \text{ is discontinuous at }\phi \right\} \right) = 0.
\]
We are going to show that $\hat{b}$ thus defined satisfies conditions (C) and (L) above.

To establish condition (C), choose $K \geq \|\hat{b}\|_{\infty}$. We have to show that for Lebesgue almost every $t\in [0,T]$, every $\theta\in \mathcal{Q}_{\nu,K}$,
\begin{equation} \label{EqConditionC}
\begin{split}
		\Theta_{\nu}\Bigl(\phi\in \mathcal{X}: \exists\, (\phi_{n},\theta_{n})\subset \mathcal{X}\times \prbms{\mathcal{X}} \text{ s.th.\ } \hat{b}(t,\phi_{n},\theta_{n}) \not\to \hat{b}(t,\phi,\theta)\\ \text{ while } (\phi_{n},\theta_{n})\to (\phi,\theta) \Bigr) = 0.
	\end{split}
\end{equation}
Let $t\in [0,T]$ be such that $u(t,\cdot)$ is $\Theta_{\nu}$-almost surely continuous; this is true for Lebesgue almost every $t\in [0,T]$ by the continuity assumption on $u$. Let $\theta\in \mathcal{Q}_{\nu,K}$. Using Part~\eqref{LemmaAppRegularity2IndCont} of Lemma~\ref{LemmaAppRegularity2}, we find $A_{t,\theta}\in \Borel{\mathcal{X}}$ such that $\Theta_{\nu}(A_{t,\theta}) = 1$ and, for every $\phi\in A_{t,\theta}$, the mappings $u(t,\cdot)$ and $\mathbf{1}_{[0,\tau^{\hat{X}}(\cdot))}(t)$ are continuous at $\phi$. In view of Part~ \eqref{LemmaAppRegularity2Cont} of Lemma~\ref{LemmaAppRegularity2}, one can choose $A_{t,\theta}$ such that also the mapping $\tau^{\hat{X}}(\cdot)$ is continuous on $A_{t,\theta}$. Since $\Theta_{\nu}$ and $\theta$ are equivalent, we have $\theta(A_{t,\theta}) = 1$. Now, let $\phi\in A_{t,\theta}$, and let $(\phi_{n},\theta_{n})\subset \mathcal{X}\times \prbms{\mathcal{X}}$ be such that $(\phi_{n},\theta_{n})\to (\phi,\theta)$ as $n\to\infty$. Then, by the mapping theorem and since $\mathbf{1}_{[0,\tau^{\hat{X}}(\cdot))}(t)$ is $\theta$-almost surely continuous,
\[
	\int_{\mathcal{X}} \mathbf{1}_{[0,\tau(\tilde{\phi}))}(t) \theta_{n}(d\tilde{\phi}) \stackrel{n\to\infty}{\longrightarrow} \int_{\mathcal{X}} \mathbf{1}_{[0,\tau(\tilde{\phi}))}(t) \theta(d\tilde{\phi}).
\]
Since $w$ is bounded and continuous, we also have
\[
	\int_{\mathcal{X}} w(\tilde{\phi}(t))\cdot \mathbf{1}_{[0,\tau(\tilde{\phi}))}(t) \theta_{n}(d\tilde{\phi}) \stackrel{n\to\infty}{\longrightarrow} \int_{\mathcal{X}} w(\tilde{\phi}(t))\cdot \mathbf{1}_{[0,\tau(\tilde{\phi}))}(t) \theta(d\tilde{\phi}).
\]
In view of Lemma~\ref{LemmaAppRegularity},
\[
	\int_{\mathcal{X}} \mathbf{1}_{[0,\tau(\tilde{\phi}))}(t) \theta(d\tilde{\phi}) = \Mean_{\theta}\left[ \mathbf{1}_{[0,\tau^{\hat{X}})}(t) \right] = \theta\left( \tau > t \right) > 0.
\]
Since $\bar{b}(t,\cdot,\cdot)$ is bounded and continuous, it follows that
\begin{multline*}
	\hat{b}(t,\phi_{n},\theta_{n}) = u(t,\phi_{n}) + \bar{b}\left(t, \phi(t), \frac{\int w(\tilde{\phi}(t)) \mathbf{1}_{[0,\tau(\tilde{\phi}))}(t) \theta_{n}(d\tilde{\phi})}{\int \mathbf{1}_{[0,\tau(\tilde{\phi}))}(t) \theta_{n}(d\tilde{\phi})} \right) \\
	\stackrel{n\to\infty}{\longrightarrow} u(t,\phi) + \bar{b}\left(t, \phi(t), \frac{\int w(\tilde{\phi}(t)) \mathbf{1}_{[0,\tau(\tilde{\phi}))}(t) \theta(d\tilde{\phi})}{\int \mathbf{1}_{[0,\tau(\tilde{\phi}))}(t) \theta(d\tilde{\phi})} \right) = \hat{b}(t,\phi,\theta),
\end{multline*}
and we conclude that \eqref{EqConditionC} holds.

As to condition~(L) from Section~\ref{AppUniqueness}, we have to show that, given $K \geq \|\hat{b}\|_{\infty}$, there exists $L > 0$ (possibly depending on $d$, $\nu$, $T$, and $K$) such that for all $t\in [0,T]$, all $\phi\in \mathcal{X}$,
\begin{equation} \label{EqConditionL}
	\left| \hat{b}(t,\phi,\theta) - \hat{b}(t,\phi,\tilde{\theta}) \right| \leq L\cdot \mathrm{d}_{t}(\theta,\tilde{\theta}) \text{ whenever } \theta, \tilde{\theta} \in \mathcal{Q}_{\nu,K}.
\end{equation}
Thanks to Lemma~\ref{LemmaAppRegularity},
\[
	\inf_{t\in [0,T]} \inf_{\theta\in \mathcal{Q}_{\nu,K}} \theta\left( \tau > t \right) = \inf_{\theta\in \mathcal{Q}_{\nu,K}} \theta\left( \tau > T \right) \geq c_{0} > 0
\]
for some strictly positive constant $c_{0}$ depending only on $d$, $\nu$, $T$, and $K$. Denote by $\bar{L}$ the Lipschitz constant of $\bar{b}$ with respect to its third variable according to \Hypref{HypLipschitz}. Let $\theta, \tilde{\theta} \in \mathcal{Q}_{\nu,K}$. Then for all $\phi\in \mathcal{X}$,
\begin{align*}
	& \left| \hat{b}(t,\phi,\theta) - \hat{b}(t,\phi,\tilde{\theta}) \right| \\
	\begin{split}
	&= \left| \bar{b}\left(t, \phi(t), \frac{\int w(\tilde{\phi}(t)) \mathbf{1}_{[0,\tau(\tilde{\phi}))}(t) \theta(d\tilde{\phi})}{\theta(\tau > t)} \right) \right. \\
	&\quad \left. - \bar{b}\left(t, \phi(t), \frac{\int w(\tilde{\phi}(t)) \mathbf{1}_{[0,\tau(\tilde{\phi}))}(t) \tilde{\theta}(d\tilde{\phi})}{\tilde{\theta}(\tau > t)} \right) \right|
	\end{split}\\
	\begin{split}
	&\leq \frac{\bar{L}}{c_{0}} \left| \int w(\tilde{\phi}(t)) \mathbf{1}_{[0,\tau(\tilde{\phi}))}(t) \theta(d\tilde{\phi}) - \int w(\tilde{\phi}(t)) \mathbf{1}_{[0,\tau(\tilde{\phi}))}(t) \tilde{\theta}(d\tilde{\phi}) \right| \\
	&\quad + \frac{\bar{L}}{c_{0}^{2}} \cdot \|w\|_{\infty} \cdot \left|\theta(\tau > t) - \tilde{\theta}(\tau > t)\right|.
	\end{split}
\end{align*}
By definition and since $\{\tau > t\}\in \mathcal{G}_{t}$,
\[
	\left|\theta(\tau > t) - \tilde{\theta}(\tau > t)\right| \leq \mathrm{d}_{t}(\theta,\tilde{\theta}).
\]
The measures $\theta$, $\tilde{\theta}$ are both equivalent to $\Theta_{\nu}$, and their restrictions to $\mathcal{G}_{t}$ are equivalent to the restriction of $\Theta_{\nu}$ to $\mathcal{G}_{t}$. Denoting by $Z_{t}$ and $\tilde{Z}_{t}$, respectively, the densities with respect to the restriction of $\Theta_{\nu}$, we have
\[
	\mathrm{d}_{t}(\theta,\tilde{\theta}) = \frac{1}{2} \int_{\mathcal{X}} \left| Z_{t}(\tilde{\phi}) - \tilde{Z}_{t}(\tilde{\phi}) \right| \Theta_{\nu}(d\tilde{\phi}).
\]
It follows that
\begin{multline*}
	\left| \int w(\tilde{\phi}(t)) \mathbf{1}_{[0,\tau(\tilde{\phi}))}(t) \theta(d\tilde{\phi}) - \int w(\tilde{\phi}(t)) \mathbf{1}_{[0,\tau(\tilde{\phi}))}(t) \tilde{\theta}(d\tilde{\phi}) \right| \\
	\leq 2\|w\|_{\infty}\cdot \mathrm{d}_{t}(\theta,\tilde{\theta}).
\end{multline*}
Consequently, for all $t\in [0,T]$, all $\phi\in \mathcal{X}$, all $\theta, \tilde{\theta} \in \mathcal{Q}_{\nu,K}$,
\[
	\left| \hat{b}(t,\phi,\theta) - \hat{b}(t,\phi,\tilde{\theta}) \right|
	\leq \frac{\bar{L}}{c_{0}^{2}}\cdot \|w\|_{\infty} \left(2c_{0} + 1 \right)\cdot \mathrm{d}_{t}(\theta,\tilde{\theta}),
\]
which implies \eqref{EqConditionL}.


\section{Relaxed controls}\label{AppRelaxed}

In the proofs of Section~\ref{SectApproxNash}, Appendix~\ref{AppConvergence} and Appendix~\ref{AppRegularity}, we need the concept of relaxed controls. For a Polish space $\mathcal{S}$, let $\mathcal{R}_{\mathcal{S}}$ denote the space of all deterministic $\mathcal{S}$-valued relaxed controls over the time interval $[0,T]$, that is,
\[
	\mathcal{R}_{\mathcal{S}}\doteq \left\{ r : r\text{ positive measure on }\mathcal{B}(\mathcal{S}\times [0,T]): r(\mathcal{S}\times[0,t]) = t , \, t\in[0,T] \right\}.
\]	
If $r\in \mathcal{R}_{\mathcal{S}}$, then the time derivative of $r$ exists almost everywhere as a measurable mapping $\dot{r}_{t}\!: [0,T]\rightarrow \mathcal{P}(\mathcal{S})$ such that $r(dy,dt) = \dot{r}_{t}(dy)dt$. The topology of weak convergence of measures turns $\mathcal{R}_{\mathcal{S}}$ into a Polish space. Notice that $\mathcal{R}_{\mathcal{S}}$ is compact if $\mathcal{S}$ is compact. Any $\mathcal{S}$-valued $(\mathcal{F}_{t})$-adapted process $\alpha$ defined on some filtered probability space $(\Omega,\mathcal{F},\Prb)$ induces an $\mathcal{R}_{\mathcal{S}}$-valued random variable $\rho$, the corresponding stochastic relaxed control, according to 
\[
	\rho_{\omega}\bigl(B\times I\bigr)\doteq \int_{I}\delta_{\alpha(t,\omega)}(B)dt,\quad B\in \Borel{\Gamma},\; I\in \Borel{[0,T]},\;\omega \in \Omega.
\]
The random measure $\rho$ is $(\mathcal{F}_{t})$-adapted in the sense that its restriction to $\mathcal{S}\times [0,t]$ is $\mathcal{F}_{t}$-measurable for every $t\in [0,T]$ or, equivalently, that (a version of) the time derivative process $\dot{\rho}_{\cdot}$ is $(\mathcal{F}_{t})$-adapted. For details on relaxed controls, see, for instance, \citet{elkarouietalii87} or \citet{kushner90}.

\end{appendix}

\section*{Acknowledgements}

The authors thank two anonymous Referees for their careful reading of the manuscript and helpful comments and suggestions.


\bibliographystyle{abbrvnat}

\end{document}